\newtheorem{thm}{Theorem}[]
\newtheorem{prop}[thm]{Proposition}
\newtheorem{lem}[thm]{Lemma}
\newtheorem{lem-def}[thm]{Lemma-Definition}
\newtheorem{cor}[thm]{Corollary}
\theoremstyle{remark}
\newtheorem{rmk}{Remark}[section]
\theoremstyle{definition}
\numberwithin{equation}{section}
\newcommand{\quash}[1]{}  
\newcommand{\nc}{\newcommand}
\nc{\on}{\operatorname}
\newcommand{\frakb}{{\mathfrak b}}
\newcommand{\frakc}{{\mathfrak c}}
\newcommand{\frakg}{{\mathfrak g}}
\newcommand{\frakh}{{\mathfrak h}}
\newcommand{\frakk}{{\mathfrak k}}
\newcommand{\frakl}{{\mathfrak l}}
\newcommand{\frakm}{{\mathfrak m}}
\newcommand{\frakn}{{\mathfrak n}}
\newcommand{\frakp}{{\mathfrak p}}
\newcommand{\frakq}{{\mathfrak q}}
\newcommand{\fraks}{{\mathfrak s}}
\newcommand{\frakt}{{\mathfrak t}}
\newcommand{\frakz}{{\mathfrak z}}
\newcommand{\frakZ}{{\mathfrak Z}}
\newcommand{\bbA}{{\mathbb A}}
\newcommand{\bbC}{{\mathbb C}}
\newcommand{\bbG}{{\mathbb G}}
\newcommand{\bbM}{{\mathbb M}}
\newcommand{\bbP}{{\mathbb P}}
\newcommand{\bbQ}{{\mathbb Q}}
\newcommand{\bbR}{{\mathbb R}}
\newcommand{\bbZ}{{\mathbb Z}}
\newcommand{\calA}{{\mathcal A}}
\newcommand{\calB}{{\mathcal B}}
\newcommand{\calD}{{\mathcal D}}
\newcommand{\calE}{{\mathcal E}}
\newcommand{\calF}{{\mathcal F}}
\newcommand{\calG}{{\mathcal G}}
\newcommand{\calH}{{\mathcal H}}
\newcommand{\calL}{{\mathcal L}}
\newcommand{\calO}{{\mathcal O}}
\newcommand{\calS}{{\mathcal S}}
\newcommand{\calV}{{\mathcal V}}
\nc{\al}{{\alpha}} \nc{\be}{{\beta}}
\newcommand{\ga}{{\gamma}}
\nc{\ve}{{\varepsilon}} \nc{\Ga}{{\Gamma}}
\newcommand{\la}{{\lambda}}
\nc{\La}{{\Lambda}}
\nc{\ad}{{\on{ad\ \!}}}
\newcommand{\Ad}{{\on{Ad}}}
\nc{\Adm}{{\on{Adm}}} \nc{\aff}{{\on{aff}}}
\nc{\Aff}{{\mathbf{Aff}}}
\newcommand{\Aut}{{\on{Aut}}}
\nc{\Bun}{{\on{Bun}}}
\nc{\der}{{\on{der}}}
\nc{\diag}{{\on{diag}}}
\newcommand{\End}{{\on{End}}}
\nc{\Fl}{{\calF\ell}}
\newcommand{\Gal}{{\on{Gal}}}
\newcommand{\Gr}{{\on{Gr}}}
\newcommand{\Hom}{{\on{Hom}}}
\nc{\IC}{{\on{IC}}}
\nc{\Id}{{\on{Id}}}
\nc{\Ind}{{\on{Ind}}}
\newcommand{\Lie}{{\on{Lie\ \!}}}
\newcommand{\Pic}{{\on{Pic}}}
\newcommand{\Res}{{\on{Res}}}
\nc{\res}{{\on{res}}}
\newcommand{\Spec}{{\on{Spec\ \!}}}
\nc{\tr}{{\on{tr}}}
\newcommand{\GL}{{\on{GL}}}
\nc{\GSp}{{\on{GSp}}} \nc{\GU}{{\on{GU}}}
\nc{\SL}{{\on{SL}}} \nc{\SU}{{\on{SU}}} \nc{\SO}{{\on{SO}}}
\nc{\bFl}{{\overline{\Fl}}} \nc{\bU}{{\overline{U}}}
\nc{\wGr}{{\widetilde{\Gr}}} \nc{\wJG}{{\widetilde{\calL^+\calG}}}
\nc{\wLG}{{\widetilde{\calL\calG}}} \nc{\Op}{{\on{Op}}}
\nc{\crit}{{\on{crit}}} \nc{\Vac}{{\on{Vac}}}
\nc{\Ran}{{\on{Ran}}}
\nc{\Rat}{{\on{Rat}}}
\nc{\RanG}{{\Ran_\calG}}
\nc{\fS}{{\on{fS}}}
\nc{\ppars}{(\!(s)\!)}
\nc{\opp}{\on{opp}}
\def\xcoch{\mathbb{X}_\bullet}
\title{Frenkel-Gross' irregular connection and Heinloth-Ng\^{o}-Yun's are the same}
\author{Xinwen Zhu}
\address{Xinwen Zhu: Department of Mathematics, California Institute of Technology, Pasadena, CA, 91125, USA.}
\email{xzhu@caltech.edu}
\begin{document}
\maketitle

\begin{abstract}
We show that the irregular connection on $\bbG_m$ constructed by
Frenkel-Gross (\cite{FG}) and the one constructed by
Heinloth-Ng\^{o}-Yun (\cite{HNY}) are the same, which confirms
Conjecture 2.16 of \cite{HNY}.
\end{abstract}

\bigskip

\bigskip
 
\centerline{\sc Contents}
\medskip

\noindent  \ \ \  \ \ \ Introduction\\
\S~1.   \ Kac-Moody chiral algebras\\
\S~2.   \ Hecke eigensheaves\\
\S~3.  \ Some geometry of the local Hitchin map\\
\S~4.  \ Endomorphisms of some vacuum modules\\
\S~5.  \ Proof of a conjecture in \cite{HNY}\\
 \phantom{aa} \  \ References\\

\bigskip
\medskip

 \medskip

\section*{Introduction}
We show that the irregular connection on $\bbG_m$ constructed by
Frenkel-Gross (\cite{FG}) and the one constructed by
Heinloth-Ng\^{o}-Yun (\cite{HNY}) are the same, which confirms
Conjecture 2.16 of \cite{HNY}.

The proof is simple, modulo the big machinery of quantization of
Hitchin's integrable systems as developed by Beilinson-Drinfeld
(\cite{BD}). The key idea is as follows: instead of directly showing that the two connections constructed in \cite{FG} and \cite{HNY} are isomorphic, we go the opposite direction by showing that their corresponding automorphic sheaves under the Langlands correspondence are isomorphic. More precisely,
Let $\calE$ be the irregular
connection on $\bbG_m$ as constructed by Frenkel-Gross. It admits a
natural oper form. We apply (a variant of) Beilinson-Drinfeld's machinery to
produce an automorphic D-module on the corresponding moduli space of
$G$-bundles, with $\calE$ its Hecke eigenvalue. We show that this
automorphic D-module is equivariant with respect to the unipotent
group $I(1)/I(2)$ (see \cite{HNY} or below for the notation) against a
non-degenerate additive character $\Psi$. By the uniqueness of such
D-modules on the moduli space, the automorphic
D-module constructed using Beilinson-Drinfeld's machinery is the
same as the automorphic D-module explicitly constructed by
Heinloth-Ng\^{o}-Yun. Since the irregular connection on $\bbG_m$
constructed in \cite{HNY} is by definition the Hecke-eigenvalue of
this automorphic D-module, it must be the same as $\calE$. We emphasize here that the simple nature of the proof is due to the existence of the ``Galois-to-automorphic" direction for the Langlands 
correspondence over $\bbC$.  Similarly, the existence of such direction over finite fields (which remains conjectural) would imply the unicity of the Kloosterman sheaves as conjectured in \cite{HNY}.

Let us brief describe the main new ingredients needed in the proof of the conjecture. Let $X$ be a smooth projective curve over $\bbC$ and let $\calG$ be a smooth affine group scheme over $X$. Let $\Bun_\calG$ denote the moduli stack of $\calG$-torsors on $X$.
Assume that there is some dense open subset $U\subset X$ such that $\calG|_U=G\times U$ for some reductive group $G$ over $\bbC$. Then there is the usual Hitchin map
\[h^{cl}: T^*\Bun_\calG\to \on{Hitch}(U),\]
where $\on{Hitch}(U)$ is the Hitchin base for $U$ (which is not finite dimensional if $U$ is affine, see \eqref{Hitchin base}). The map factors through a finite dimensional closed subscheme $\on{Hitch}(X)_\calG\subset \on{Hitch}(U)$.

The new difficulty is that, unlike the case when $\calG=G\times X$ is constant, in general $h^{cl}$ is not a completely integrable system, and in particular $\bbC[\on{Hitch}(X)_{\calG}]$ does not provide a maximal Poisson commuting subalgebra of the ring of regular functions on $T^*\Bun_\calG$. For every $x\in X\setminus U$, let $K_x:=\calG(\calO_x)$ and let $P_x$ be the normalizer of $K_x$ in $G(F_x)$, where $\calO_x$ denotes the complete local ring at $x$ and $F_x$ denotes the fractional field of $\calO_x$ (in the note, $\calG(\calO_x)$ will be a Moy-Prasad subgroup so $P_x$ will be a parahoric subgroup of $G(F_x)$). Then $Q_x:=P_x/K_x$ acts on $\Bun_\calG$ by translations and therefore there is the moment map $\mu_x: T^*\Bun_\calG\to \frakq_x^*$ where $\frakq_x$ is the Lie algebra of $Q_x:=P_x/K_x$. Then one has the enhanced Hitchin map
\[h^{cl}\times\prod_x \mu_x: T^*\Bun_\calG\to \on{Hitch}(X)_\calG\times \prod_x \frakq_x^*.\]
Our observation is that in some nice cases (e.g. $K_x$ is the pro-unipotent radical  of a pararhoric  or $K_x$ is a subgroup of $G(F_x)$ considered in \cite{RY}, see \S~\ref{loc Hitchin}), one can obtain a maximal Possion commuting subalgebra of the ring of regular functions on $T^*\Bun_\calG$ from this enhanced system. Then we can quantize this system  by (a generalization of) Beilinson-Drinfeld's machinery to obtain an affine scheme $\Spec A$ and Hecke eigensheaves parameterized by it. A new feature is that points on $\Spec A$ do not parameterize opers in general, but by construction there is a map from $\Spec A$ to the space of opers. E.g., in the case considered in \S~\ref{Proof}, $\Spec A$ paramterizes linear forms on $\Lie(I(1)/I(2))$.

Let us summarize the contents of the note. 
We first review of the main results of
\cite{BD} in \S~\ref{KMCA}-\S~\ref{recon}. We take the opportunity to describe
a slightly generalized version of \cite{BD}
in order to deal with the level structures (in particular see Corollary \ref{variant}). These results are probably known to experts but seem not to exist in literature yet.
In \S~\ref{loc Hitchin}, we study some geometry of the local Hitchin map by relating the invariant theory of simple Lie algebras over local fields with the invariant theory of Vinberg's theta groups. In particular, we determine the images of the dual of some subalgebras of a loop algebra under this map. Then we quantize these results in \S~\ref{endo alg} to get some information of the endomorphism algebras of some modules of an affine Kac-Moody algebra at the critical level. These two sections contain results more than needed in the proof of the main theorem. It may be useful to relate the subsequent  works \cite{Y} and \cite{C}. 
The main result is proved in \S~\ref{Proof}.

\medskip\noindent\bf Notations. \rm
In the note, $X$ will be a smooth projective curve over $\bbC$. For
every closed point $x\in X$, let $\calO_x$ denote the complete local
ring of $X$ at $x$ and let $F_x$ denote its fractional field. Let
$D_x=\Spec \calO_x$ and $D_x^\times=\Spec F_x$. For a scheme $T$ of finite type over $\calO_x$, we denote by $T(\calO_x)$ the jet space $T$. For an affine scheme $T$ of finite type over $F_x$, we denote by $T(F_x)$ the ind-scheme of the loop space of $T$. 

In the note, unless otherwise specified, $\calG$ is a
(fiberwise) connected smooth affine group scheme over $X$ such that
$G=\calG\otimes \bbC(X)$ is reductive. We denote by $\Bun_\calG$ the stack of $\calG$-bundles on $X$. This is a smooth algebraic stack over $\bbC$.

In the note, a presheaf means a covariant functor from the category of commutative $\bbC$-algebras to the category of sets. 

Let $\calF\to\calF'$ be a morphism of presheaves and let $u:\Spec R\to \calF'$ be morphism. We use $\calF\otimes R$, or $\calF_R$ to denote $\calF\times_{\calF'}\Spec R$ if no confusion is likely to arise.

For
an affine (ind-)scheme $T$, we denote by $\on{Fun} T$ or $\bbC[T]$ the
(pro-)algebra of regular functions on $T$. 

\medskip\noindent\bf Acknowledgement. \rm The author thanks the referee for careful reading and critical questioning of the early version of the note. The author also thanks T.-H. Chen and M. Kamgarpour for very useful comments. The work is partially supported by NSF grant DMS-1001280/1313894 and DMS-1303296/1535464 and the AMS Centennial Fellowship.

\section{Kac-Moody chiral algebras}\label{KMCA}
In the following two sections, we review the main results of the work \cite{BD}, with some generalizations in order to deal with level structures.
\subsection{Beilinson-Drinfeld Grassmannians}
We refer to \cite[\S~3]{Z2} for a general introduction.
For $x\in X$, let $\Gr_{\calG,x}$ denote the
affine Grassmannian of $\calG$ at $x$. This is a presheaf that assigns to every $\bbC$-algebra $R$ the pairs
$(\calF,\beta_x)$, where $\calF$ is a $\calG$-torsor on $X\otimes R$ and
$\beta_x$ is a trivialization of $\calF$ away from the graph $\Gamma_x$ of $x$. Then $\Gr_{\calG,x}$ is represented by an ind-scheme over $\bbC$, ind of finite type, and we can write
$$\Gr_{\calG,x}\simeq G(F_x)/K_x,$$ where $K_x=\calG(\calO_x)$, regarded as a pro-algebraic group over $\bbC$. 
By forgetting $\beta_x$, we get a map
$$u_x: \Gr_{\calG,x}\to \Bun_\calG.$$

By allowing multiple points and allowing points to move, one obtain the Ran version of the Beilinson-Drinfeld Grassmannian $\Gr_{\calG, \Ran(X)}$, which is a presheaf\footnote{Note that it is not a sheaf.} that assigns to every $\bbC$-algebra $R$, the triples $(\{x_i\}, \calF,\beta)$, where $\{x_i\}\subset X(R)$ is a finite non-empty subset, $\calF$ is a $\calG$-torsor on $X\otimes R$, and $\beta$ is a trivialization of $\calF$ away from the union of the graphs $\cup_i \Gamma_{x_i}$ of $\{x_i\}$.  There is a natural projection
\[q: \Gr_{\calG,\Ran(X)}\to \Ran(X),\]
to the Ran space $\Ran(X)$ of $X$, which assigns to every $\bbC$-algebra $R$, the finite non-empty subsets of $X(R)$. If $U\subset X$ is an open subset, we have the Ran space $\Ran(U)$ of $U$, which is an open sub-presheaf of $\Ran(X)$. Let $\Gr_{\calG,\Ran(U)}:=\Gr_{\calG,\Ran(X)}\times_{\Ran(X)}\Ran(U)$ denote the base change. There is the (ad\`elic version) uniformization map
$$u_{\Ran}: \Gr_{\calG,\Ran(U)}\to \Bun_\calG,$$
by forgetting $(\{x_i\},\beta)$, and a unit section 
\[e_\Ran:\Ran(U)\to \Gr_{\calG,\Ran(U)}\]
given by the canonical trivialization of the trivial $\calG$-torsor. In the sequel, if $\calG$ and $U$ are clear from the context, we write $\Ran(U)$ by $\Ran$ and $\Gr_{\calG,\Ran(U)}$ by $\Gr_\Ran$ for brevity.

A salient feature of the Beilinson-Drinfeld Grassmannian is its factorizable property (cf. \cite[\S~3.10.16]{BD1}). First, the Ran space $\Ran=\Ran(U)$ has a semi-group structure given by union of points
\[\on{union}: \Ran\times \Ran\to \Ran,\quad (\{x_i\},\{x_j\})\mapsto \{x_i,x_j\}.\]
Let $(\Ran\times \Ran)_{disj}$ denote the open sub presheaf consisting of those $\{x_i\}$ and $\{x_j\}$ with $\{x_i\}\cap \{x_j\}=\emptyset$. Then the factorization property amounts to a canonical isomorphism
\[ \Gr_{\Ran}\times_{\Ran}(\Ran\times\Ran)_{disj}\simeq \Gr_{\Ran}\times\Gr_{\Ran}|_{(\Ran\times\Ran)_{disj}},\]
compatible with the unit section $e:\Ran\to\Gr_\Ran$ and satisfying a natural cocycle condition over $(\Ran\times\Ran\times \Ran)_{disj}$. 

Next, we recall a factorizable line bundle on $\Gr_\Ran$.
Let $\omega_{\Bun_\calG}$ be the canonical sheaf of
$\Bun_\calG$. This is in fact a line bundle that can be described as follows. Let $\calE$ denote the universal $\calG$-torsor on $X\times\Bun_\calG$ and $\ad\calE=\calE\times^\calG \Lie\calG$ be the adjoint bundle.
Then $\omega_{\Bun_\calG}$ is isomorphic to the determinant line bundle $\det(\ad\calE)$ for $\ad\calE$. Its fiber over the trivial bundle is $\det(\Lie \calG)$. Therefore, we can normalize $\omega_{\Bun_\calG}$ to a rigidified line bundle as
\[\omega_{\Bun_\calG}^{\sharp}=\omega_{\Bun_{\calG}} \otimes \det(\Lie \calG)^{-1}.\]
Let $\calL_{2c,x}=u_x^*\omega_{\Bun_\calG}^\sharp\in \Pic(\Gr_{\calG,x})$, and let $\calL_{2c,\Ran}=u_{\Ran}^*\omega_{\Bun_\calG}^\sharp$.

\begin{lem}\label{fl}
The line bundle $\calL_{2c,\Ran}$ admits a natural factorizable structure, compatible with the factorizable structure of $\Gr_\Ran$. 
\end{lem}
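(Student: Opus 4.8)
The plan is to exhibit $\calL_{2c,\Ran}$ as a determinant-of-cohomology line bundle on $\Gr_\Ran$ and then read off the factorizable structure from the additivity of the determinant of cohomology along disjoint supports. First I would record the local form of $\calL_{2c,\Ran}$: writing $\pi\colon X\otimes R\to\Spec R$ for the projection and using the identification $\omega_{\Bun_\calG}\simeq\det(\ad\calE)=\det R\pi_*(\ad\calE)$ together with base change, the fiber of $u_\Ran^*\omega_{\Bun_\calG}^\sharp$ at a point $(\{x_i\},\calF,\beta)\in\Gr_\Ran(R)$ is $\det R\pi_*(\ad\calF)\otimes\det R\pi_*(\ad\calF_0)^{-1}$, where $\calF_0$ is the trivial $\calG$-torsor, so that $\det R\pi_*(\ad\calF_0)=\det(\Lie\calG)$ and the rigidification $\otimes\det(\Lie\calG)^{-1}$ is built in. Setting $\Gamma=\bigcup_i\Gamma_{x_i}$, the datum $\beta$ is an isomorphism $\ad\calF|_{(X\otimes R)\setminus\Gamma}\simeq\ad\calF_0|_{(X\otimes R)\setminus\Gamma}$; by the standard properties of the determinant of cohomology this exhibits the fiber above canonically as $\det R\pi_*(\calC_\calF)$ for a perfect complex $\calC_\calF$ on $X\otimes R$ whose set-theoretic support is contained in $\Gamma$. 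In other words $\calL_{2c,\Ran}$ is ``local along $\Gamma$'', which is the only input needed.

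Next I would construct the factorization isomorphism. Over $(\Ran\times\Ran)_{disj}$ the factorization isomorphism of $\Gr_\Ran$ sends a pair of $R$-points with disjoint supports $\Gamma'=\bigcup_i\Gamma_{x_i}$ and $\Gamma''=\bigcup_j\Gamma_{x_j}$ and torsors $\calF'$, $\calF''$ to the point $(\{x_i,x_j\},\calF,\beta)$, where $\calF$ is the fusion of $\calF'$ and $\calF''$: for disjoint opens $U'\supset\Gamma'$ and $U''\supset\Gamma''$ of $X\otimes R$ there are compatible isomorphisms $\calF|_{U'}\simeq\calF'|_{U'}$, $\calF|_{U''}\simeq\calF''|_{U''}$ and $\calF|_{(X\otimes R)\setminus(\Gamma'\cup\Gamma'')}\simeq\calF_0$. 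Consequently the complex $\calC_\calF$ of the previous paragraph is canonically $\calC_{\calF'}\oplus\calC_{\calF''}$, with the two summands supported in $\Gamma'$ and $\Gamma''$ respectively, and applying $\det R\pi_*$ converts this direct sum into the tensor product $\det R\pi_*(\calC_{\calF'})\otimes\det R\pi_*(\calC_{\calF''})$. This gives the required isomorphism
\[\calL_{2c,\Ran}|_{\Gr_\Ran\times_\Ran(\Ran\times\Ran)_{disj}}\;\simeq\;(\calL_{2c,\Ran}\boxtimes\calL_{2c,\Ran})|_{(\Ran\times\Ran)_{disj}},\]
and it is compatible with the factorization isomorphism of $\Gr_\Ran$ by construction. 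The unit compatibility is immediate: over $e_\Ran$ one has $\calF=\calF_0$, hence $\calC_{\calF_0}=0$, both sides are canonically $\calO$, and the isomorphism is the canonical $\calO\otimes\calO=\calO$. The cocycle condition over $(\Ran\times\Ran\times\Ran)_{disj}$ follows from the associativity and symmetry of the decomposition $\calC_\calF=\calC_{\calF'}\oplus\calC_{\calF''}\oplus\calC_{\calF'''}$ over a disjoint union of three opens, together with the functoriality of the determinant of cohomology. (Alternatively, granting the general principle that determinant-of-cohomology line bundles on Beilinson--Drinfeld Grassmannians are automatically factorizable, cf.\ \cite{BD1}, the construction is immediate.)

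The hard part will be the functorial bookkeeping of determinant lines in families: $\ad\calF$ and $\ad\calF_0$ are only isomorphic away from $\Gamma$ and need not be comparable as lattices, so one must genuinely work with the determinant of the perfect complex $R\pi_*(\calC_\calF)$ and verify that each comparison isomorphism used --- the local form of $\calL_{2c,\Ran}$, the identifications $\calF|_{U'}\simeq\calF'|_{U'}$ implicit in the factorization of $\Gr_\Ran$, and the threefold decomposition entering the cocycle --- is canonical and mutually compatible. This is routine but is the only place where real care is required.
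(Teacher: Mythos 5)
Your proof is correct and the underlying mathematics is the same as the paper's: both rest on the fact that $\calL_{2c}$ is a determinant line for $\ad\calE$ and that determinant lines are ``local along the modification divisor,'' which yields factorizability. The difference is purely one of packaging. The paper first passes through the adjoint representation to get a morphism $\Ad\colon\Gr_{\calG,\Ran}\to\Gr_{\calG',\Ran}$ with $\calG'=\GL(\Lie\calG)$, observes this morphism is compatible with factorization structures, and then cites the (graded) factorizability of the determinant line bundle on the Beilinson--Drinfeld Grassmannian for $\GL_n$; you instead reproduce that standard argument in situ, working directly with $\det R\pi_*(\ad\calF)\otimes\det R\pi_*(\ad\calF_0)^{-1}$ and the supported complex $\calC_\calF$. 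Your route is more self-contained and spells out the unit and cocycle checks explicitly, at the cost of the bookkeeping you yourself flag at the end (making $\calC_\calF$ genuinely canonical requires choosing a common lattice or working with the index/determinant formalism, exactly what the cited $\GL_n$ statement encapsulates); the paper's route outsources that bookkeeping to a reference and only has to note the compatibility of $\Ad$ with factorization, which is immediate. Either is a valid proof.
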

\begin{proof}
Let $\calG'=\GL(\Lie \calG)$. This is an inner form of $\GL_n$ on $X$. Let $\calL_{\det}$ denote the determinant line bundle on $\Bun_{\calG'}$ as well as its pullback to $\Gr_{\calG',\Ran(X)}$. Then $\calL_{2c}$ is just the pullback of $\calL_{\det}$ along the natural morphism $\Ad:\Gr_{\calG,\Ran(X)}\to\Gr_{\calG',\Ran(X)}$, sending a $\calG$-torsor to its adjoint bundle. Note that $\ad$ is compatible with the natural factorizable structures on $\Gr_{\calG,\Ran(X)}$ and on $\Gr_{\calG',\Ran(X)}$. Therefore, it is enough to show that $\calL_{\det}$ admits a natural (graded) factorizable structure, which is well-known (e.g. see \cite[Remark 3.2.8]{Z2}).
\end{proof}

\subsection{Kac-Moody chiral algebras}\label{KM chiral}
Let $F=\bbC(X)$. We assume that $\calG$ is a smooth, affine fiberwise connected group scheme over $X$ with $G:=\calG\otimes F$ simple and simply-connected. 
Let us briefly discuss the Kac-Moody chiral algebra attached to $\calG$, generalizing the usual Kac-Moody chiral algebra for the constant group scheme over $X$.
  
Let $q: \Gr_{\calG}:=X\times_{\Ran(X)}\Gr_{\calG,\Ran(X)}\to X$ be the Beilinson-Drinfeld Grassmannian over $X$, and $e:X\to\Gr_{\calG}$ the unit section given by the trivial
$\calG$-torsor. Let $\calL_{2c}$ be the pullback of the line bundle $\calL_{2c,\Ran}$ to
$\Gr_{\calG}$. Let $\delta_e$ be the $\calL_{2c}^{1/2}$-twisted delta right $D$-module along the unit section\footnote{Recall that given a line bundle $\calL$ on an algebraic variety $X$, it makes sense to talk about $\calL^\la$-twisted $D$-modules on $X$ for any $\la\in\bbC$.}. Let us define
\[\calV ac_X:=q_*(\delta_e),\]
the push-forward of $\delta_e$ as a quasi-coherent sheaf.
\begin{lem}\label{flat}
The sheaf
$\calV ac_X$ is flat as an $\calO_X$-module.
\end{lem}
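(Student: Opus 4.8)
Show that $\calV ac_X = q_*(\delta_e)$ is flat over $\calO_X$.

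The plan is to reduce flatness to a statement about the fibers of $\Gr_\calG \to X$ being ``nice enough'' — in particular, to the fact that the fiber of the Beilinson-Drinfeld Grassmannian over each point $x \in X$ is an ind-scheme stratified by finite-type subschemes, and that $\delta_e$ has a filtration whose associated graded pieces push forward to locally free $\calO_X$-modules. Concretely, the first step is to recall the structure of $\Gr_\calG$ as an ind-scheme over $X$: we may write $\Gr_\calG = \bigcup_n \Gr_\calG^{(n)}$ as an increasing union of closed subschemes, each proper (or at least of finite type) over $X$, with the unit section $e$ factoring through $\Gr_\calG^{(0)}$. Because $\calG$ is smooth and fiberwise connected over the smooth curve $X$, the morphism $q$ restricted to each $\Gr_\calG^{(n)}$ is flat (the affine Grassmannian of $\calG$ is ``equidimensional in families'' over $X$ — this is where smoothness of $\calG$ enters, e.g. via the fact that for a smooth affine $\calG$ one has $\calL^+\calG$ acting with the usual orbit structure, and the Cartan stratification varies nicely).

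The second step is to set up the $D$-module side. The object $\delta_e$ is the twisted delta $D$-module along $e: X \hookrightarrow \Gr_\calG$, so as a (twisted) right $D$-module it is $e_*(\omega_X)$ tensored with the appropriate $\calL_{2c}^{1/2}$-twist. As a quasi-coherent sheaf, $e_* \omega_X$ on $\Gr_\calG$ is supported on the image of $e$; using the standard filtration of a delta $D$-module by its ``order of the pole'' — i.e. the filtration coming from the normal bundle $N$ of $X$ in $\Gr_\calG$ — we get $\delta_e$ as a colimit of coherent sheaves whose associated graded is $\bigoplus_{k \geq 0} \Sym^k(N) \otimes \omega_X$ (twisted), each a vector bundle on $X$ pushed into $\Gr_\calG$. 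Since $\Gr_\calG$ is smooth over $\bbC$ along the unit section (the affine Grassmannian is formally smooth, and the unit section lands in the smooth locus) and $X$ is smooth, $N$ is a (pro-)vector bundle on $X$; hence each graded piece pushes forward under $q$ to a locally free $\calO_X$-module, and $q_*$ is exact on this filtered system because everything is set-theoretically supported on $e(X) \cong X$ over which $q$ is an isomorphism. Therefore $\calV ac_X = q_* \delta_e$ is a colimit of locally free $\calO_X$-modules with locally free associated graded, hence flat.

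**Main obstacle.** The delicate point is the interaction between the twist by $\calL_{2c}^{1/2}$ and the pushforward: one must check that the square root $\calL_{2c}^{1/2}$ genuinely exists as a line bundle on $\Gr_\calG$ (or at least that the $\calL_{2c}^{1/2}$-twisted $D$-module category is well-defined and behaves correctly — this is where Lemma~\ref{fl} and the rigidification $\omega^\sharp_{\Bun_\calG}$ are used, since the determinant line bundle restricted to the relevant locus is $2$-divisible in $\Pic$) and that restricting this twist to the unit section gives back a well-defined twist on $X$ compatible with the filtration above. A second, more technical, obstacle is the infinite-dimensionality: $N$ is only a pro-finite-dimensional bundle, $\Gr_\calG$ is an ind-scheme, and $q_*$ must be interpreted on the ind-pro system with care (taking $q_*$ of $\delta_e$ as a quasi-coherent sheaf means a colimit over the finite-type pieces); one has to confirm that the flatness of each finite-dimensional approximation passes to the colimit, which it does because filtered colimits of flat modules are flat and because the transition maps in the system are injective (the filtration by pole order). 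I expect the twist-compatibility to be the step requiring the most care, while the flatness-of-fibers input is essentially the standard fact that $\Gr_\calG \to X$ is flat for smooth $\calG$, proved exactly as in the constant-group-scheme case.
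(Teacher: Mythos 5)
Your core argument — filter $\delta_e$ by pole order along the unit section, identify the associated graded as $\mathrm{Sym}$ of the normal/tangent bundle, show that bundle is $\calO_X$-flat, and conclude since filtered colimits of flats are flat — is the same argument the paper gives (by reference to \cite[\S 7.11.8]{BD}). However, your opening step is both unnecessary and, in the generality of this lemma, false: for the ramified group schemes $\calG$ considered here (parahoric or Moy--Prasad level structures), the closed finite-type pieces $\Gr_\calG^{(n)}$ are \emph{not} flat over $X$ in general — the fibers of $\Gr_\calG\to X$ jump in structure between, say, an affine flag variety and an affine Grassmannian, and this ``non-constancy'' is precisely what makes the ramified theory subtle. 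Fortunately you never actually use that claim: what your later steps rely on is only the flatness of the normal bundle $N$ of $e(X)$. There, though, your justification (absolute smoothness of $\Gr_\calG$ over $\bbC$ plus smoothness of $X$) is a little too weak to control the $\calO_X$-module structure of $N$; the paper instead invokes \emph{relative} formal smoothness of $\Gr_\calG\to X$, deduced from formal smoothness of $\calL\calG$ and $\calL^+\calG$ over $X$ and the presentation $\Gr_\calG=\calL\calG/\calL^+\calG$, which directly yields that $T_e\Gr_\calG$ (or equivalently $N$) is a flat — indeed pro/ind-vector-bundle — $\calO_X$-module. Finally, the twist by $\calL_{2c}^{1/2}$ that you flag as the ``main obstacle'' is a non-issue for flatness: tensoring by a line bundle (or more precisely, working in the twisted $D$-module category, which exists for any $\la\in\bbC$ without requiring an actual square root) preserves the filtration and does not affect $\calO_X$-flatness of the associated graded.
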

\begin{proof}First note that $\pi:\Gr_\calG\to X$ is formally smooth. Indeed, let $\calL^+\calG$ (resp. $\calL\calG$) be the global jet (resp. loop) group of $\calG$, which classifies $(x,\gamma)$, where $x\in X$ and $\gamma: D_x\to \calG$ (resp. $\gamma: D_x^\times\to \calG$) is a section. Then $\calL^+\calG$ and $\calL\calG$ are formally smooth. As $\Gr_\calG=\calL\calG/\calL^+\calG$, it is formally smooth.

Then one can argue as \cite[\S~7.11.8]{BD} to show that the tangent space $T_e\Gr_\calG$ of $\Gr_\calG$ along $e:X\to \Gr_\calG$ is a flat $\calO_X$-module, and $\delta_e$ admits a filtration whose associated graded is $\on{Sym}T_e\Gr_\calG$. Therefore, $\calV ac_X$ is flat.
\end{proof}
We refer to \cite{BD1} for the generalities of chiral algebras.
\begin{lem}\label{chiral}
$\calV ac_X$ has a natural chiral algebra structure over $X$.
\end{lem}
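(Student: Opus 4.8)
The plan is to construct the chiral bracket on $\calV ac_X$ from the factorization structure established in Lemma \ref{fl}, following the strategy of Beilinson--Drinfeld but allowing the nonconstant group scheme $\calG$. Recall that a chiral algebra structure on a right $D$-module $\calA$ on $X$ is a $D$-module map $\mu: j_*j^*(\calA\boxtimes\calA)\to \Delta_*\calA$ on $X\times X$ (where $j$ is the open embedding of the complement of the diagonal $\Delta$) satisfying skew-symmetry and the Jacobi identity. So the first step is to exhibit $\mu$. The key input is that $\calV ac_X$ is, by construction, $q_*$ of the $\calL_{2c}^{1/2}$-twisted delta $D$-module $\delta_e$ along the unit section of $\Gr_\calG\to X$; the twisting uses the square root of the factorizable line bundle $\calL_{2c,\Ran}$, and the existence of the square root as a factorizable line bundle is exactly what one needs for the critical-level (more precisely, the $1/2$-twisted) picture to be consistent. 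I would first record that $\calV ac_X$ is a right $D$-module on $X$ (the pushforward along $q$ of a $D$-module twisted by a line bundle carrying a flat connection along the fibers, with the base variation giving the $D_X$-action), and that it is flat over $\calO_X$ by Lemma \ref{flat}.

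Next I would build $\mu$ from the factorization isomorphism. Over $(X\times X)\setminus\Delta = X\times X|_{disj}$ the factorization property of Lemma \ref{fl} gives
\[
\Gr_{\calG}\times\Gr_{\calG}\big|_{disj}\;\simeq\;\Gr_{\calG,\Ran}\times_{\Ran}(X\times X)_{disj},
\]
compatibly with the unit sections and with $\calL_{2c}^{1/2}$ (here one uses that the factorizable structure on $\calL_{2c,\Ran}$ is compatible with that of $\Gr_\Ran$, so it induces one on the chosen square root — this compatibility of the square root with factorization is the technical heart and I will come back to it). Pushing the delta $D$-module $\delta_{e\times e}$ forward along this identification, and using that $\delta_e$ on $\Gr_{\calG}$ restricted to disjoint pairs matches $\delta_e\boxtimes\delta_e$, one gets that $j^*(\calV ac_X\boxtimes\calV ac_X)$ is computed by the Grassmannian over $(X\times X)_{disj}$. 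On the other hand, near the diagonal one has the degeneration in which two moving points collide; pushing forward the delta $D$-module along $\Gr_{\calG,\Ran}$ restricted to a formal neighborhood of $\Delta$ and using that the unit section is compatible with union of points produces the map to $\Delta_*\calV ac_X$. Concretely, I would realize $\mu$ as the composite
\[
j_*j^*(\calV ac_X\boxtimes\calV ac_X)\;\longrightarrow\; q^{(2)}_*\,\delta_{e^{(2)}}\;\longrightarrow\;\Delta_*\,q_*\delta_e \;=\;\Delta_*\calV ac_X,
\]
where $q^{(2)}:\Gr_{\calG,\Ran}\times_{\Ran}X^2\to X^2$ and the second arrow is proper pushforward along the ``union'' map over the diagonal. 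The flatness from Lemma \ref{flat}, together with the filtration by $\on{Sym}T_e\Gr_\calG$, is used to see that no higher pushforwards intervene and that the relevant maps of $D$-modules are well-defined on the nose rather than only in the derived category.

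Having produced $\mu$, the remaining step is to verify the chiral algebra axioms: the unit axiom (the unit section $e$ provides the unit, as in \cite[\S~3.3]{BD1}), skew-symmetry, and the Jacobi identity on $X\times X\times X$. These follow formally from the cocycle condition satisfied by the factorization isomorphism over $(X\times X\times X)_{disj}$ and its compatibility with the unit section, exactly as in the constant-group-scheme case treated in \cite{BD} and \cite{BD1}; the arguments are manipulations of $D$-modules on powers of $X$ and do not see the difference between $\calG$ and $G\times X$ once the factorizable line bundle $\calL_{2c,\Ran}$ and its square root are in place. The main obstacle, and the only place where $\calG$ being nonconstant genuinely enters, is ensuring that the factorizable structure on $\calL_{2c,\Ran}$ admits a \emph{factorizable} square root with all the compatibilities (unit, cocycle) needed to twist $\delta_e$ consistently over the whole Ran space; I would handle this by the same reduction as in Lemma \ref{fl}, namely pulling back the (graded) factorizable square root of the determinant line bundle on $\Bun_{\GL(\Lie\calG)}$ along the adjoint map $\Ad:\Gr_{\calG,\Ran(X)}\to\Gr_{\GL(\Lie\calG),\Ran(X)}$, which is compatible with factorization, and invoking the known existence of such a square root in the $\GL_n$ case (the ``critical twist'' $\calL_{\det}^{1/2}$).
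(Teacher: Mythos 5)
Your proposal and the paper take different routes to the same endpoint: you construct the chiral bracket $\mu$ by hand from the factorization isomorphism, whereas the paper simply invokes the dictionary between chiral algebras and factorization algebras from \cite[Chap.~2, Sect.~4]{BD1}, showing that the collection $\{\calV ac_{X^I}\otimes\omega_{X^I}^{-1}\}$ is a factorization algebra and concluding. Your route is more explicit but also more laborious, since it requires you to re-derive the unit, skew-symmetry, and Jacobi conditions; the paper's route absorbs all of this into the dictionary.

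However, there is a genuine gap in your proposal, and it lies exactly in what you identify as ``the technical heart.'' You assert that one needs $\calL_{2c,\Ran}$ to admit a \emph{factorizable square root as a line bundle} in order to twist $\delta_e$ consistently, and you propose to obtain it from the determinant line bundle in the $\GL_n$ case. Neither point is correct. First, the paper's footnote in \S~\ref{KM chiral} is precisely telling you that $\calL^\lambda$-twisted $D$-modules make sense for \emph{any} $\lambda\in\bbC$ without requiring $\calL^\lambda$ to exist as a line bundle: the relevant object is the twisted sheaf of differential operators $\calD_{\calL^{1/2}}$, which is determined by $\calL$ and the number $1/2$ alone. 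Consequently, the factorizable structure on $\calL_{2c,\Ran}$ established in Lemma~\ref{fl} already induces a factorizable structure on the TDO and hence on the twisted delta modules; no square root is required. This is why the paper can prove Lemma~\ref{chiral} before and independently of the (much more delicate) Proposition~\ref{square root}, which discusses when $\omega_{\Bun_\calG}^{1/2}$ exists as an honest line bundle and which the paper explicitly says the reader can skip. Second, the fallback claim ``invoking the known existence of such a square root in the $\GL_n$ case'' fails: for $\GL_n$ at a single point, $\Pic(\Gr_{\GL_n})\cong\bbZ$ is generated by $\calL_{\det}$, so $\calL_{\det}$ has no square root as a line bundle. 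What Lemma~\ref{fl} actually uses is the (graded, i.e.\ super) factorizable structure on $\calL_{\det}$ itself, not on a square root of it. Once you replace ``factorizable square root of $\calL_{2c,\Ran}$'' by ``factorizable structure on the $\calL_{2c}^{1/2}$-twisted TDO, induced from Lemma~\ref{fl}'' the rest of your construction goes through.
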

\begin{proof}For each finite non-empty set $I$, we have $\Gr_{\calG,X^I}\to X^I$ and therefore the similarly defined $\calV ac_{X^I}$.
Using the dictionary between chiral algebras and factorization algebras \cite[Chap. 2, Sect. 4]{BD1}, the lemma is equivalent to saying that there is a natural factorization structure on the collection $\{\calV ac_{X^I}\otimes\omega_{X^I}^{-1}\}$. This follows from Lemma \ref{fl}.
\end{proof} 

We call $\calV ac_X$ the Kac-Moody chiral algebra associated to $\calG$. 

\subsection{Vacuum modules}\label{vac mod}
 By flatness, the fiber of $\calV ac_X$ over a point $x\in X$ is
\[\Vac_x:=\Gamma(\Gr_{\calG,x},\delta_e|_{\Gr_{\calG,x}}).\]
We need a more representation theoretical description of $\Vac_x$ in some cases. 
For the purpose, we need to fix a few notations.

We write $\frakg_x=\Lie \calG\otimes F_x$, regarded as an infinite dimensional Lie algebra over $\bbC$. Let $\hat{\frakg}_x$ be the ``level one" Kac-Moody central extension of $\frakg_x$ (i.e., the completion of the derived subalgebra of one of those in \cite[Theorem 7.4, 8.3]{Ka}). We fix an Iwahori subgroup $I\subset G(F_x)$, and let $\Lambda_i, i=0,\ldots,\ell$ be the fundamental weights of $\hat{\frakg}_x$ (with respect to $I$). 
Let $\rho_{\on{aff}}=\sum \Lambda_i$. For a standard parahoric subgroup $P\supset I$ of $G(F_x)$, let  $\rho_P$ denote the half sum of affine roots in $P/I$. As before, we write $K_x=\calG(\calO_x)$, and let $\frakk_x$ denote its Lie algebra. Its pre-image in $\hat{\frakg}_{x}$ is denoted by $\hat{\frakk}_{x}$.

\begin{lem}\label{vacuum I}
Assume that $K_x=P$ is a standard parahoric subgroup. Then as a representation of $\hat{\frakg}_x$,
\[\Vac_x=\on{Ind}_{\hat{\frakk}_x}^{\hat{\frakg}_x} (\bbC_{-\rho_{\on{aff}}+\rho_P}),\]
where $\bbC_{-\rho_{\on{aff}}+\rho_P}$ is the $1$-dimensional representation of $\hat{\frakk}_x$ given by the character $-\rho_{\on{aff}}+\rho_P$.
\end{lem}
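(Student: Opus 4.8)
The plan is to write down the tautological $\hat\frakg_x$-module map out of the induced module, to show it is an isomorphism by comparing filtrations (following \cite{BD}), and then to compute the character $\chi$ of $\hat\frakk_x$ on the generating line of $\Vac_x$; this last computation is where the real content lies.

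Since $K_x=\calG(\calO_x)=P$ we have $\Gr_{\calG,x}\simeq G(F_x)/P$, with $e$ the base point and $P=\on{Stab}(e)$; write $\frakk_x=\Lie P$ and let $\hat\frakk_x$ be its preimage in $\hat\frakg_x$. The first point is that $\calL_{2c,x}^{1/2}$ is the \emph{critical} line bundle on $\Gr_{\calG,x}$: as in the proof of Lemma \ref{fl}, $\calL_{2c,x}$ is pulled back along $\Ad\colon\Gr_{\calG,x}\to\Gr_{\GL(\Lie\calG),x}$ from the determinant line bundle, and the trace form of the adjoint representation is the Killing form, which is $2h^\vee$ times the basic form, so after halving the associated level is the critical one. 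Hence $\Vac_x=\Gamma(\Gr_{\calG,x},\delta_e)$ is a $\hat\frakg_x$-module at the critical level, and its central element acts by $-h^\vee$. In particular, writing $\chi$ for the character by which $\hat\frakk_x$ acts on the line $\ell$ introduced below, we already get $\chi(c)=-h^\vee=\langle-\rho_{\on{aff}}+\rho_P,\,c\rangle$, the last equality because the affine roots in $P/I$ are real and hence pair trivially with $c$.

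By Lemma \ref{flat} and its proof, over $x$ the sheaf $\delta_e$ carries an exhaustive increasing filtration whose associated graded is $\on{Sym}(T_e\Gr_{\calG,x})=\on{Sym}(\frakg_x/\frakk_x)$ and whose bottom term is a line $\ell$ — the fibre of $\delta_e$, equivalently of $\calL_{2c,x}^{1/2}$, at $e$. This line is stable under $\hat\frakk_x$ (which preserves the filtration, the vector fields in $\frakk_x$ vanishing at $e$), say via the character $\chi$, so $\ell\simeq\bbC_\chi$ as $\hat\frakk_x$-modules. Frobenius reciprocity now gives a $\hat\frakg_x$-morphism $\on{Ind}_{\hat\frakk_x}^{\hat\frakg_x}(\bbC_\chi)\to\Vac_x$ carrying $\bbC_\chi$ onto $\ell$, and it is surjective because $\Vac_x=\hat\frakg_x\cdot\ell$. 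Since $\frakg_x$ acts on $\delta_e$ through operators of order $\le 1$, this map is filtered for the PBW filtration on the source, and the induced map on associated graded is the identity of $\on{Sym}(\frakg_x/\frakk_x)$; hence it is an isomorphism, and we are reduced to identifying $\chi$.

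On $\frakk_x$ the character $\chi$ vanishes on the pro-unipotent radical $\frakp^+$ of $\frakp=\frakk_x$ and factors through a character of the reductive Lie algebra $\frakl_P$, hence is determined by $\chi|_\frakt$; and $\chi(c)=-h^\vee$ was noted above. To compute $\chi|_\frakt$ I would use that $\ell$ is the fibre at $e$ of $u_x^*\bigl(\omega_{\Bun_\calG}\otimes\det(\Lie\calG)^{-1}\bigr)^{1/2}$, that the fibre of $\omega_{\Bun_\calG}$ at a $\calG$-bundle near the trivial one is the determinant of cohomology $\det R\Gamma(X,\ad\calF)$, and that the $\det(\Lie\calG)^{-1}$-factor rigidifies its value $\det R\Gamma(X,\Lie\calG)$ at $e$; the resulting $\frakt$-weight is then (half of, with a sign) the regularised trace of $\ad(-)$ on the lattice $\frakk_x\subset\frakg_x$. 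Since the affine roots appearing in $\frakk_x$ split as those of $\frakl_P$ together with those of $\frakp^+$, and the roots of $\frakl_P$ sum to zero, this regularised trace is $2\rho_{\frakp^+}$ with $\rho_{\frakp^+}|_\frakt=(\rho_{\on{aff}}-\rho_P)|_\frakt$, giving $\chi|_\frakt=(-\rho_{\on{aff}}+\rho_P)|_\frakt$ as desired. I expect the main obstacle to be making this last step fully rigorous: pinning down the regularisation and the overall sign, i.e. seeing precisely how the $\det(\Lie\calG)^{-1}$-normalisation of $\omega_{\Bun_\calG}^{\sharp}$ and the chosen normalisation of $\hat\frakg_x$ conspire to produce the $\rho_{\on{aff}}$-shift, with the roots of the Levi of $P$ contributing the $\rho_P$-correction. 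An alternative isolating exactly the same computation is to treat the Iwahori case $K_x=I$ first (where $\rho_P=0$) and then descend along the proper smooth fibration $G(F_x)/I\to G(F_x)/P$ with fibre $P/I\cong L_P/B_{L_P}$, whose relative dualising sheaf accounts for the extra $\rho_P$.
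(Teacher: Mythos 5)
Your approach coincides with the one the paper relies on: the published proof is a one-line citation to \cite[\S~4.1]{Z} (with the caveat that $\calL_{2c,x}$ here is the \emph{inverse} of the $\calL_{2c}$ there, so sign conventions matter), and that reference carries out precisely the line-bundle weight computation you sketch. The reduction to identifying the character $\chi$ is clean and correct: the filtered Frobenius-reciprocity argument (PBW filtration on the source, the filtration on $\delta_e$ from Lemma~\ref{flat} on the target, associated graded the identity on $\on{Sym}(\frakg_x/\frakk_x)$) is exactly what one should say, and the level computation $\chi(c)=-h^\vee$ from $\kappa_{\mathrm{Killing}}=2h^\vee\cdot(\text{basic form})$ is fine, as is the observation that $\rho_P$ pairs trivially with the central element.

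The gap, which you flag yourself, is the final determination of $\chi|_\frakt$, and as written it is not yet an argument. The asserted identity $\rho_{\frakp^+}|_\frakt=(\rho_{\on{aff}}-\rho_P)|_\frakt$ is a formal manipulation with a divergent sum: $\rho_{\on{aff}}=\sum_i\Lambda_i$ is defined intrinsically and is \emph{not} a convergent half-sum of the positive affine roots, so the phrase ``regularised trace of $\ad$ on $\frakk_x\subset\frakg_x$'' needs to be extracted from the determinant-of-cohomology model of $\omega^\sharp_{\Bun_\calG}$ rather than posited. In particular, the rigidification $\omega^\sharp_{\Bun_\calG}=\omega_{\Bun_\calG}\otimes\det(\Lie\calG)^{-1}$ trivialises the \emph{fibre} of $\calL_{2c,x}$ at $e$, but that trivialisation is not $\hat\frakk_x$-equivariant; the discrepancy between the rigidified trivialisation and the equivariant structure \emph{is} the character you are computing, and that discrepancy is not pinned down by the sketch. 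Your alternative route — settle the Iwahori case first (so $\rho_P=0$), then descend along $G(F_x)/I\to G(F_x)/P$ with fibre $L_P/B_{L_P}$ and attribute the $\rho_P$-correction to the relative canonical of this finite flag fibration — is cleaner, localises the regularisation issue to a single case, and is closer in spirit to how \cite[\S~4.1]{Z} organises the calculation; I would recommend writing the proof that way. As it stands the proposal identifies the right mechanism but leaves the decisive numerics unverified, which, for a lemma whose entire content is the precise value $-\rho_{\on{aff}}+\rho_P$, is a genuine gap.
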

\begin{proof}The the lemma follows from the calculation of \cite[\S~4.1]{Z}. Note that the line bundle $\calL_{2c,x}$ here corresponds to $\calL_{2c}^{-1}$ in \emph{loc. cit.}.
\end{proof}

As explained in \cite[\S~4.1]{Z}, the restriction of $-\rho_{\on{aff}}+\rho_P$ to the center of $\hat{\frakg}_x$ is always given by multiplication by $-h^\vee$, the dual Coxeter number of (the split form of) $G$. Therefore, it is customary to consider $\Vac_x$ as a module of the critical extension $\hat{\frakg}_{c,x}$ of $\frakg_x$, i.e. $(-h^\vee)$-multiple of the ``level one" central extension $\hat{\frakg}_x$, so that the center of $\hat{\frakg}_{c,x}$ acts on $\Vac_x$ by identity. In the sequel, we adapt this latter point of view. Let $\mathbf{1}_x$ (or $\mathbf{1}$ for simplicity) denote the central element in $\hat{\frakg}_{c,x}$ that acts on $\Vac_x$ by identity.

When $\Hom(K_x,\bbG_m)=1$, e.g. $K_x$ is a maximal parahoric or a pro-unipotent group, there is another expression of $\Vac_x$, which might justify the notation. Namely, in this case, there is a unique splitting of  the Lie algebra $\hat{\frakk}_x\simeq \bbC\mathbf{1}\oplus \frakk_x$ coming from the splitting at the group level. Then
\begin{equation}\label{vacuum II}
\Vac_x=\on{Ind}_{\bbC\mathbf{1}\oplus \frakk_x}^{\hat{\frakg}_{c,x}} (\bbC),
\end{equation}
where $\mathbf{1}$ acts on $\bbC$ by identity and $\frakk_x$ acts trivially. That is, $\Vac_x$ is a ``vacuum" module.

\begin{rmk}\label{diff splitting}
However, if $\Hom(K_x,\bbG_m)$ is non-trivial, there is no ``natural" splitting of $\hat{\frakk}_x$. For example, if $K_x=I$ is the Iwahori subgroup, by embedding $I$ into different maximal parahorics one obtains \emph{different} splittings of $\hat{\frakk}_x$, and if one writes $\Vac_x$ as the form $\on{Ind}_{\bbC\mathbf{1}\oplus \frakk_x}^{\hat{\frakg}_{c,x}} (\bbC_\chi)$ using these splittings, $\frakk_x$ will act on $\bbC_\chi$ via different characters $\chi$.
\end{rmk}

\quash{
It will be useful to have another expression of $\Vac_x$,

Recall that the line bundle $\calL_\crit$ is canonically trivialized over the unit section of $\Gr_{\calG}$ and therefore, 
$\hat{\frakk}_{x}$ canonically splits as $\frakk_x\oplus\bbC\bf{1}$ as Lie aglebras. 
Therefore, we can write
\[\Vac_x=\Ind_{\frakk_x+\bbC\bf{1}}^{\hat{\frakg}_{c,x}}(\bbC_\chi),\]
where $\bbC_\chi$ is 1-dimensional, on which $\mathbf{1}$ acts as identity and $\frakk_x$ acts via certain character $\chi$.

\quash{
Note that $G=\calG\otimes F$ is automatically quasi-split. We choose a pinning $(G,B,T,e)$ of $G$, and a pinning $(G_0,B_0,T_,e_0)$ of its split form over $\bbC$, we fix once and for all an isomorphism
\begin{equation}
(G,B,T,e)\otimes_F\overline F\simeq (G_0,B_0,T_0,e_0)\otimes_\bbC \overline{F}
\end{equation}
It induces a map 
\begin{equation}
\psi:\Gal(\overline F/F)\to \Aut(G_0,B_0,T_0,e_0)\simeq \on{Out}(G_0)
\end{equation} such that the natural action of $\ga\in\Gal(\overline{F}/F)$ on the left hand side corresponds to the action $\psi(\ga)\otimes\ga$ on the right hand side.
It also defines an apartment $\calA$ together with a special point $v_0\in \calA$ of $G\otimes F_x$ for every $x\in X$.
$$\calA=\Hom_{F_x}(\bbG_m,T_{F_x})\otimes\bbR\simeq \Hom(\bbG_m,T_0)^{\Gal(\overline F_x/F_x)}\otimes\bbR,$$
}

Finally, if $K_x$ is pro-unipotent, then $\chi$ is trivial.

\begin{rmk}The module $\Vac_x$ is not always isomorphic to $\Ind_{\Lie
K_x+\bbC\bf{1}}^{\hat{\frakg}_{\crit,x}}(\on{triv})$, due to the
twist by $\calL_{\crit}$. For example, if $K_x$ is an Iwahori
subgroup,
\[\Vac_x=\Ind_{\Lie
K_x+\bbC\bf{1}}^{\hat{\frakg}_{\crit,x}}(\bbC_{-\rho}),\] is the
Verma module of highest weight $-\rho$ ($-\rho$ is anti-dominant
w.r.t. the chosen $K_x$).
\end{rmk}
}

\subsection{The center}For any chiral algebra $\calA$  over a curve, one can associate the
algebra of its endomorphisms, denoted by $\calE nd(\calA)$. As
a sheaf of $\calO_X$-modules on $X$, it is defined as
\[\calE nd(\calA)=\calH om_\calA(\calA,\calA),\]
where $\calH om_{\calA}$ is taken in the category of chiral $\calA$-modules. If $\calA$ is  $\calO_X$-flat, which is the case we will be considering, it is quasi-coherent. 
Obviously, $\calE nd(\calA)$ is an algebra by composition. Less
obviously, there is a natural chiral algebra structure on $\calE
nd(\calA)\otimes\omega_X$ which is compatible with the algebra
structure. In fact, the natural map $\calE nd(\calA)\otimes\omega_X\to \calA$ identifies $\calE nd(\calA)\otimes\omega_X$ with the center $\frakz(\calA)$ of $\calA$.

Therefore, $\calE nd(\calA)$ is a commutative
$\calD_X$-algebra. There is a natural
injective mapping $\calE nd(\calA)_x\to \End(\calA_x)$ which is not
necessarily an isomorphism in general, where $\End(\calA_x)$ is the
endomorphism algebra of $\calA_x$ as a chiral $\calA$-module. However,
this is an isomorphism if there is some open neighborhood $U$
containing $x$ such that $\calA|_U$ is constructed from a vertex
algebra (in the sense of \cite[Chap. 19]{FB}). We refer to \cite{N} for details of the above discussion. We apply the above discussion to $\calE nd(\calV ac_X)$. In particular, if $\calG$ is unramified at $x$, $\calE nd(\calV ac_X)_x\simeq \End(\Vac_x)$.

To continue, we assume that $G=\calG\otimes F$ is split, i.e. it is the base change of simple complex Lie group $G_0$ from $\bbC$ to $F$.  The reason we make this assumption is that the relevant theory for twisted affine algebras has not been fully developed.
We denote by $\frakg$ the Lie
algebra of $G$ and ${^L}\frakg$ the Langlands dual Lie algebra, equipped with a Borel subalgebra ${^L}\frakb$. Following the usual convention, we consider ${^L}\frakg$ and ${^L}\frakb$ as finite dimensional Lie algebras over $\bbC$ (while $\frakg$ is a Lie algebra over $F$).

Recall the definition of opers (cf. \cite[\S~3]{BD}). Let $\on{Op}_{{^L}\frakg}(D_x)$ (resp.
$\on{Op}_{{^L}\frakg}(D^\times_x)$) be the scheme (resp. ind-scheme) of
${^L}\frakg$-opers on the disc $D_x$ (resp. punctured disc $D^\times_x$).  Let $A_{{^L}\frakg}(D_x)$ (resp. $A_{{^L}\frakg}(D_x^\times)$) denote the ring of its regular functions.
Let $\hat{U}_{c}(\frakg_x)$ denote the completed universal enveloping algebra of $\hat{\frakg}_{c,x}$, and let $\frakZ_x$ denote the center of $\hat{U}_{c}(\frakg_x)$. Then  the Feigin-Frenkel
isomorphism (\cite[\S~3.7.13]{BD}, \cite{F}) asserts that there is a canonical isomorphism
\begin{equation}\label{FeFr}
\varphi_x:A_{{^L}\frakg}(D^\times_x)\simeq \frakZ_x, 
\end{equation}
satisfying certain compatibility conditions (one of which will be reviewed in \S~\ref{endo alg}).
Then we obtain
 \[A_{{^L}\frakg}(D^\times_x)\simeq\frakZ_x\to\End(\on{Vac}_x).\]
If $\calG$ is unramified at $x$, this map factors as
\[A_{^{L}\frakg}(D_x^\times)\twoheadrightarrow A_{^{L}\frakg}(D_x)\simeq\frakz_x:=\End(\Vac_x).\]

Let $U\subset X$ be an open subscheme such that $\calG|_U\simeq G_0\times U$. Let $\on{Op}_{{^L}\frakg}|_U$ be the $\calD_U$-scheme over $U$, whose
fiber over $x\in U$ is the scheme of ${^L}\frakg$-opers on $D_x$.
Then by the above generality, the Feigin-Frenkel
isomorphism gives rise to
\[\Spec \calE nd(\calV ac_U)\simeq \on{Op}_{{^L}\frakg}|_U.\]

Recall that for a commutative $\calD_U$-algebra $\calB$, one can take
the algebra of its horizontal sections $H_\nabla(U,\calB)$ (or
so-called conformal blocks) \cite[\S~2.6]{BD}, which is usually a
topological commutative algebra. For example,
\[\Spec
H_{\nabla}(U,\on{Op}_{{^L}\frakg})=\on{Op}_{{^L}\frakg}(U)\] is the
ind-scheme of ${^L}\frakg$-opers on $U$ (\cite[\S~3.3]{BD}). 
As the map
$$H_{\nabla}(U,\calE nd(\calV ac_U))\to H_{\nabla}(X,\calE nd(\calV
ac_X))$$ is surjective, we have a closed embedding
\[\Spec H_{\nabla}(X,\calE nd(\calV ac_X))\to\on{Op}_{{^L}\frakg}(U).\]
Let $\Op_{{^L}\frakg}(X)_\calG$ denote the image of this closed
embedding. This is a subscheme (rather than an ind-scheme) of
$\on{Op}_{{^L}\frakg}(U)$.

We recall the characterization $\on{Op}_{{^L}\frakg}(X)_{\calG}$.
\begin{lem}\label{support}
Let $X\setminus U=\{x_1,\ldots,x_n\}$.
Assume that the support of $\Vac_{x_i}$ (as an
$\frakZ_{x_i}$-module) is
$Z_{x_i}\subset\on{Op}_{{^L}\frakg}(D^\times_{x_i})$, i.e.
$\on{Fun}(Z_{x_i})=\on{Im} (A_{{^L}\frakg}(D_{x_i}^\times)\to
\End(\Vac_{x_i}))$. Then $$\on{Op}_{{^L}\frakg}(X)_{\calG}\simeq
\on{Op}_{{^L}\frakg}(U)\times_{\prod_i\on{Op}_{{^L}\frakg}(D^\times_{x_i})}\prod
Z_{x_i}.$$
\end{lem}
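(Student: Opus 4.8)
The plan is to unwind the definitions on both sides and reduce everything to the compatibility of the Feigin–Frenkel isomorphism with horizontal sections. Recall that $\on{Op}_{{^L}\frakg}(X)_\calG$ is by definition the image of the closed embedding $\Spec H_\nabla(X,\calE nd(\calV ac_X))\hookrightarrow\on{Op}_{{^L}\frakg}(U)$, so I must compute the algebra $H_\nabla(X,\calE nd(\calV ac_X))$. First I would recall that $\calE nd(\calV ac_X)$ is a commutative $\calD_X$-algebra which restricts over $U$ to (the $\calD_U$-algebra corresponding to) $\on{Op}_{{^L}\frakg}|_U$, by the Feigin–Frenkel isomorphism together with the fact that over $U$ the chiral algebra $\calV ac_X$ comes from a vertex algebra, so $\calE nd(\calV ac_U)_x\simeq\End(\Vac_x)$. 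Over the punctured discs at the $x_i$ the situation is governed by the localization of $\calE nd(\calV ac_X)$ near $x_i$: taking horizontal sections of $\calE nd(\calV ac_X)$ on a punctured neighborhood of $x_i$ gives $A_{{^L}\frakg}(D^\times_{x_i})$, and the extra constraint imposed by the presence of the actual point $x_i$ (rather than the puncture) is exactly that the resulting functions factor through $\on{Im}\big(A_{{^L}\frakg}(D^\times_{x_i})\to\End(\Vac_{x_i})\big)=\on{Fun}(Z_{x_i})$, since by definition $\calE nd(\calV ac_X)_{x_i}\hookrightarrow\End(\Vac_{x_i})$ and its image is $\on{Fun}(Z_{x_i})$ by the hypothesis.

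The second step is the gluing/Mayer–Vietoris argument. The commutative $\calD_X$-algebra $\calE nd(\calV ac_X)$ is determined by its restriction to $U$ together with its formal completions at the $x_i$, compatibly over the punctured discs $D^\times_{x_i}$. Passing to horizontal sections, $H_\nabla$ turns this into a fiber product: horizontal sections on $X$ = horizontal sections on $U$ fiber-producted over horizontal sections on the punctured discs with the completions at the $x_i$. Concretely, $\Spec H_\nabla(X,\calE nd(\calV ac_X))\simeq\Spec H_\nabla(U,\on{Op}_{{^L}\frakg})\times_{\prod_i\Spec H_\nabla(D^\times_{x_i},\on{Op}_{{^L}\frakg})}\prod_i\Spec(\calE nd(\calV ac_X)_{x_i})$, which is precisely $\on{Op}_{{^L}\frakg}(U)\times_{\prod_i\on{Op}_{{^L}\frakg}(D^\times_{x_i})}\prod_i Z_{x_i}$ once we substitute $H_\nabla(U,\on{Op}_{{^L}\frakg})=\on{Op}_{{^L}\frakg}(U)$ (cited from \cite[\S~3.3]{BD}), $H_\nabla(D^\times_{x_i},\on{Op}_{{^L}\frakg})=\on{Op}_{{^L}\frakg}(D^\times_{x_i})$, and $\Spec\calE nd(\calV ac_X)_{x_i}=Z_{x_i}$ by hypothesis. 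One must check that the map $\Spec H_\nabla(X,\calE nd(\calV ac_X))\to\on{Op}_{{^L}\frakg}(U)$ used to define $\on{Op}_{{^L}\frakg}(X)_\calG$ agrees with the projection from this fiber product, which is immediate from naturality of the restriction map $H_\nabla(U,\calE nd(\calV ac_U))\to H_\nabla(X,\calE nd(\calV ac_X))$ and the surjectivity already noted in the text.

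I expect the main obstacle to be justifying the gluing step at the level of $\calD$-module/chiral-algebra generalities: that the commutative $\calD_X$-algebra $\calE nd(\calV ac_X)$ — and its functor of horizontal sections — genuinely satisfies the Mayer–Vietoris property with respect to the cover of $X$ by $U$ and the formal discs $D_{x_i}$, including identifying the completed stalk $\calE nd(\calV ac_X)_{x_i}$ with $\on{Fun}(Z_{x_i})$ rather than merely with a subalgebra of $\End(\Vac_{x_i})$. This hinges on the flatness of $\calV ac_X$ (Lemma \ref{flat}), so that taking fibers commutes with the relevant constructions, and on the fact, recalled in the text, that $\calE nd(\calV ac_X)_x\to\End(\Vac_x)$ is injective with image computing the support of $\Vac_x$ as a $\frakZ_x$-module — which is exactly the content packaged into the hypothesis on $Z_{x_i}$. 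Once this local-to-global statement for commutative $\calD_X$-algebras is in hand (it is the $\calD$-scheme analogue of the description of $\on{Op}_{{^L}\frakg}(U)$ via opers on formal discs with prescribed singularities), the identification is a formal consequence.
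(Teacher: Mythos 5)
Your proposal is essentially the paper's own argument: identify $\Spec H_\nabla(X,\calE nd(\calV ac_X))$ by gluing horizontal sections on $U$ with the stalks at the $x_i$ over the punctured-disc horizontal sections, then substitute the Feigin--Frenkel identifications. Two places where the paper is more explicit than you are, and which you should tighten, are the following. First, you assert $\Spec\calE nd(\calV ac_X)_{x_i}=Z_{x_i}$ ``by hypothesis,'' but the hypothesis only identifies $\on{Fun}(Z_{x_i})$ with the image of $A_{{^L}\frakg}(D_{x_i}^\times)\to\End(\Vac_{x_i})$; you still need to know that $\calE nd(\calV ac_X)_{x_i}$ coincides with that image. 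The paper gets this from two facts: injectivity of $\calE nd(\calV ac_X)_{x_i}\to\End(\Vac_{x_i})$ (flatness, Lemma \ref{flat}), and surjectivity of $A_{{^L}\frakg}(D_{x_i}^\times)\to\calE nd(\calV ac_X)_{x_i}$, which follows because $\Spec\calE nd(\calV ac_X)_{x_i}$ is the scheme of horizontal sections on $D_{x_i}$, hence a closed subscheme of the ind-scheme of horizontal sections on $D_{x_i}^\times$, which is $\on{Op}_{{^L}\frakg}(D_{x_i}^\times)$. Second, the local-to-global (``Mayer--Vietoris'') step for horizontal sections of a commutative chiral algebra, which you correctly flag as the crux, is not something you need to rederive: the paper invokes the last paragraph of \cite[\S~2.4.12]{BD1} applied to $\calE nd(\calV ac_X)$, which is exactly the statement you describe. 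With that citation supplied, your argument is complete and matches the paper's.
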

\begin{proof}Note that $\calV ac_X$ is flat, and therefore by Lemma \ref{flat}, $\calE nd(\calV ac_X)_{x_i}\to \calE nd(\Vac_{x_i})$ is injective. Recall that $\Spec \calE nd(\calV ac_X)_{x_i}$ is identified with the scheme of horizontal sections of $\calE nd(\calV ac_X)$ on the disc $D_{x_i}$. On the other hand, the ind-scheme of horizontal sections of $\calE nd(\calV ac_X)$ on $D_{x_i}^\times$ is just $\on{Op}_{{^L}\frakg}(D_{x_i}^\times)$. Therefore $A_{{^L}\frakg}(D_{x_i}^\times)\to
\calE nd(\calV ac_X)_{x_i}$ is surjective.
Therefore,
 $$\calE nd(\calV ac_X)_{x_i}=\on{Im} (A_{{^L}\frakg}(D_{x_i}^\times)\to
\End(\Vac_{x_i})).$$ Then
the lemma follows from the last paragraph of \cite[\S~2.4.12]{BD1} applied to the commutative chiral algebra $\calE nd(\calV ac_X)$.
\end{proof}

In \S~\ref{endo alg}, we will give an explicit description of $Z_{x_i}$ in some cases.

\section{Hecke eigensheaves}\label{recon} 
\subsection{Square root of $\omega_{\Bun_\calG}^\sharp$}
In order to construct the Hecke eigensheaves, it is important to have a square root $\omega_{\Bun_\calG}^{1/2}$ of $\omega_{\Bun_\calG}^\sharp$ as a line bundle. In the case considered in \S~\ref{Proof}, the existence of such a square root is very easy (see Remark \ref{sqr5}). But in general, this is subtle. The case when $\calG$ is a constant semisimple group scheme is discussed in \cite[\S~4]{BD}. Here, we describe a result for the case when $\calG$ is non-constant, but $G=\calG\otimes\bbC(X)$ is simple and simply-connected. The readers can skip this subsection.

We write $F=\bbC(X)$. We make the following assumptions on $\calG$.
\begin{enumerate}
\item[(i)] $G=\calG\otimes F$ is absolutely simple and simply-connected; 
\item[(ii)] for every $x\in X$, either $\calG|_{\calO_x}$ is a parahoric group scheme or the fiber of $\calG$ at $x$ is a unipotent group. 
\end{enumerate}

Recall the line bundle $\calL_{2c,x}=u_x^*\omega_{\Bun_\calG}^\sharp$.

\begin{prop}\label{square root}
Assumptions are as above. Then a square root of $\omega_{\Bun_{\calG}}^\sharp$ exists as a line bundle if and only if for every $x$ such that $\calG|_{\calO_x}$ is a parahoric group, the line bundle $\calL_{2c,x}$ admits a square root as a line bundle. In addition, such a square root is unique.
\end{prop}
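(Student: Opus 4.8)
The plan is to reduce the existence of $\omega_{\Bun_\calG}^{1/2}$ to a statement about line bundles on the affine Grassmannians $\Gr_{\calG,x}$ for $x$ in the (finite) ramification locus, using the adelic uniformization $u_\Ran:\Gr_{\calG,\Ran(U)}\to\Bun_\calG$ and the factorizable line bundle $\calL_{2c,\Ran}$ of Lemma~\ref{fl}. The ``only if'' direction is immediate: if $\calM$ is a square root of $\omega_{\Bun_\calG}^\sharp$, then $u_x^*\calM$ is a square root of $\calL_{2c,x}$ for every $x$, in particular for those $x$ where $\calG|_{\calO_x}$ is parahoric. So the content is the ``if'' direction together with uniqueness.

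For uniqueness, I would argue that $\Bun_\calG$ has no nontrivial $2$-torsion line bundles, or more precisely that the restriction map from $\Pic(\Bun_\calG)$ to $\prod_{x}\Pic(\Gr_{\calG,x})$ (product over the ramified points) combined with the degree/central charge on the connected components of $\Bun_\calG$ is injective on $2$-torsion. Concretely: $\Bun_\calG$ is connected when $G$ is simply-connected (here I use assumption (i)), so its Picard group is (up to the discrete part coming from $\Bun_G$ over the generic point, which is torsion-free since $G$ is simply-connected) detected by the central extensions it induces on the loop groups at the ramified points; two square roots of the same line bundle differ by a $2$-torsion element, which must then be trivial. This is essentially the same mechanism as in \cite[\S~4]{BD} for the constant case, adapted by noting that away from the ramification locus $\calG$ is constant and $\Bun_{G\times U}$-type Picard considerations apply.

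For existence, the strategy is to build the square root by descent along $u_\Ran$. By the theory of \cite[\S~4]{BD} (in the factorizable/Ran formulation), a line bundle on $\Bun_\calG$ is equivalent to a factorizable line bundle on $\Gr_{\calG,\Ran(X)}$ equipped with a compatible action of the groups $\calL^+\calG$; and $\calL_{2c,\Ran}$ already carries a factorizable structure. Taking a square root amounts to choosing, compatibly, a square root of $\calL_{2c,x}$ at each $x$ together with a square root of the relevant central extension of the loop group — but the factorization structure propagates a choice made at finitely many points to the whole Ran space, because over the unramified locus $U$ the line bundle $\calL_{2c}$ is the pullback of the (canonically square-rootable, via $\rho$) determinant bundle for the constant group $G$, for which a square root exists by \cite[\S~4]{BD}. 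So one picks the canonical square root over $U$, picks by hypothesis a square root of $\calL_{2c,x}$ at each parahoric point $x$, checks these are compatible under the factorization isomorphism over $(\Ran\times\Ran)_{disj}$ (the cocycle condition), and descends. At a point $x$ where the fiber of $\calG$ is unipotent, $\Gr_{\calG,x}$ differs from the constant-group affine Grassmannian by a pro-unipotent (hence contractible, with trivial Picard) modification, so no obstruction arises there — this is why assumption (ii) suffices and the hypothesis is only imposed at parahoric points.

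The main obstacle I expect is the gluing/cocycle check: verifying that the local square roots chosen at the ramified points are mutually compatible, and compatible with the canonical square root over $U$, with respect to the factorizable structure, so that Beilinson–Drinfeld descent actually applies and produces a genuine line bundle (not merely a line bundle on an atlas). A secondary subtlety is making precise the ``canonical square root over $U$'': for the constant simply-connected group one uses that $\calL_{2c}$ restricted to $\Gr_{G,x}$ for $x\in U$ has a square root indexed by the choice of $\rho$ (equivalently, $\omega_{\Bun_G}^\sharp$ has a square root, \cite[\S~4]{BD}), and one must check this choice is itself factorizable so that it can serve as the reference over all of $U$. Both points are technical rather than deep, given the machinery of \cite{BD}.
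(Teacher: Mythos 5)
Your proposal follows the same route as the paper: the ``only if'' direction is immediate, and for the ``if'' and the uniqueness you reduce via the ad\`elic uniformization $u_\Ran$ and the factorizable structure of $\calL_{2c,\Ran}$ (Lemma~\ref{fl}) to patching the canonical $\rho$-square-root over the unramified locus with the hypothesized square roots at the parahoric points, leaving the cocycle check as the one unverified detail --- exactly what the paper also omits. The paper's descent input is stated slightly differently (and more economically) than yours: $u_\Ran^*$ is asserted to be an \emph{equivalence} of categories between rigidified line bundles on $\Bun_\calG$ and on $\Gr_{\calG,\Ran(U)}$, where $U$ is the complement of the unipotent-fiber points (a variant of \cite[4.3.14]{BD} / \cite[4.9.1]{BD1}), so that the factorizability, the $\calL^+\calG$-equivariance, and the dismissal of the unipotent points are all built into the single statement rather than handled as separate steps as in your sketch.
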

We denote the unique square root, rigidified at the trivial $\calG$-torsor, by $\omega_{\Bun_\calG}^{1/2}$.

The ``only if" part is trivial. Note that there are examples of parahorics of $G(F_x)$ such that $\calL_{2c,x}$ does not admit a square root (cf. \cite[Remark 6.1]{Z}). The ``if" part and the uniqueness is based on the following proposition.

\begin{prop}Let $U\subset X$ be an open subset such that for every $x\in X\setminus U$, the fiber of $\calG$ at $x$ is unipotent. Then 
the pullback functor $u_{\Ran}^*$ induces an equivalence of categories between rigidified line bundles on $\Bun_\calG$ and on $\Gr_{\calG,\Ran(U)}$.
\end{prop}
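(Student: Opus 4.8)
The plan is to reduce the assertion to the statement that $u_{\Ran}^*$ is bijective on isomorphism classes. This reduction is legitimate because the categories involved are discrete: on a presheaf whose only invertible global functions are the constants, a rigidified line bundle admits no nontrivial automorphism, and this applies to $\Bun_\calG$, to $\Gr_{\calG,\Ran(U)}$, and to the fibre product $Z:=\Gr_{\calG,\Ran(U)}\times_{\Bun_\calG}\Gr_{\calG,\Ran(U)}$ --- for the last one using that $Z$ is connected, which follows from the connectedness of the affine Grassmannians of $\calG$ and of the spaces of trivializations of $\calG$-torsors away from movable points (both because $G$ is simply connected).

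The first ingredient is uniformization: $u_{\Ran}$ is a surjection of fppf sheaves. For $G$ simple and simply connected this is the Drinfeld--Simpson theorem in the form established by Heinloth for parahoric group schemes; the extra point is that the finite set away from which a $\calG$-torsor $\calF$ on $X\otimes R$ is trivialized can be taken inside $U$. This is exactly where the hypothesis on $X\setminus U$ enters. If $x\in X\setminus U$, then $\calG|_{D_x}$ is a smooth affine group scheme with unipotent special fibre; since the category of $\calG$-torsors on $D_x$ is equivalent to that of torsors under the special fibre, over which $H^1$ vanishes, every $\calG$-torsor on $D_x$ is trivial, so a trivialization of $\calF$ on a punctured neighbourhood of $x$ extends across $x$, and iterating pushes the whole support into $U$. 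Granting this, $\Bun_\calG$ is the coequalizer, as an fppf sheaf, of $p_1,p_2\colon Z\rightrightarrows\Gr_{\calG,\Ran(U)}$, and fppf descent identifies rigidified line bundles on $\Bun_\calG$ with pairs consisting of a rigidified line bundle $\calM$ on $\Gr_{\calG,\Ran(U)}$ and a descent datum $\theta\colon p_1^*\calM\xrightarrow{\ \sim\ }p_2^*\calM$ on $Z$ satisfying the cocycle condition.

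Injectivity of $u_{\Ran}^*$ is then immediate. If $u_{\Ran}^*\calL$ is trivial, choose a nowhere-vanishing section $s$ compatible with the rigidification; then $p_1^*s$ and $p_2^*s$ are nowhere-vanishing sections of one and the same line bundle on the connected $Z$, hence differ by a constant, and restricting along the diagonal $\Delta\colon\Gr_{\calG,\Ran(U)}\hookrightarrow Z$ shows the constant is $1$; so $s$ descends and $\calL$ is trivial. For essential surjectivity one must equip an arbitrary rigidified $\calM$ with a descent datum; by discreteness it is unique if it exists, and then the cocycle condition is automatic, so it suffices to trivialize the line bundle $\calN:=p_1^*\calM\otimes(p_2^*\calM)^{-1}$ on $Z$. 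As $\Delta^*\calN$ is canonically trivial, this comes down to the injectivity of $\Delta^*\colon\Pic(Z)\to\Pic(\Gr_{\calG,\Ran(U)})$, after which a trivialization of $\calN$ normalized along $\Delta$ and compatible with the rigidifications provides $\theta$.

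The injectivity of $\Delta^*$ is the geometric heart of the statement and the step I expect to be the main obstacle. The map $p_1\colon Z\to\Gr_{\calG,\Ran(U)}$ has fibre over $(\calF,\{x_i\},\beta)$ the space of trivializations of $\calF$ away from movable finite subsets of $U$, with $\Delta$ a section; the content is that this fibration is contractible in the appropriate sense, so that pullback to the section is injective on Picard groups. I would deduce this from the factorization structure of $\Gr_{\calG,\Ran(U)}$ together with the contractibility arguments for Ran Grassmannians used in \cite[\S~4]{BD} (which settles the case $\calG=G\times X$). It is here that the Ran version is indispensable: for the uniformization at a single point the analogous Picard group can be strictly larger, and it is the movement of points that cuts it down to precisely the part descending from $\Bun_\calG$.
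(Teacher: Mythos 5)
Your outline reconstructs the BD mechanism the paper is implicitly invoking: the paper gives no proof, simply citing \cite[4.3.14]{BD} / \cite[4.9.1]{BD1} as the source of a ``slight variant/generalization'' and omitting details. Your reduction --- discreteness of the rigidified Picard groupoids, descent along $u_{\Ran}$ with the unipotent-fibre hypothesis entering exactly as you describe (it forces triviality of $\calG$-torsors on $D_x$ for $x\in X\setminus U$, letting the support of the trivialization be pushed into $U$), and the final appeal to the Ran contractibility argument of \cite[\S 4]{BD} --- is precisely the argument behind that citation; since you and the paper both defer the contractibility lemma to BD, the two accounts are at the same level of completeness.
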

This statement is a slight variant/generalization of \cite[4.3.14]{BD} or \cite[4.9.1]{BD1}. 
It allows us to reduce the problem to construct a square root of $\calL_{2c,\Ran}$. Then using the factorization property, we can work \'{e}tale locally around a point. We omit the details (but see \cite[\S~4.2]{Z2}) since we do not really use Proposition \ref{square root} in \S~\ref{Proof}.

\subsection{Construction of Hecke eigensheaves}
We further assume that $G:=\calG\otimes F=G_0\otimes_{\bbC} F$, where $G_0$ is a simple, simply-connected complex Lie group. 

Now we assume that $\Bun_\calG$ is ``good" in the sense of
Beilinson-Drinfeld, i.e.
$$\dim T^*\Bun_\calG=2\dim\Bun_\calG.$$
In this case one can construct the D-module of the sheaf of
critically twisted (a.k.a. $\omega_{\Bun_\calG}^{1/2}$-twisted)
differential operators on the smooth site $(\Bun_\calG)_{sm}$ of
$\Bun_\calG$, denoted by $\calD'$. Let $D'=(\underline{\End}
\calD')^{op}$ be the sheaf of endomorphisms of $\calD'$ as a twisted
D-module. Then $D'$ is a sheaf of associative algebras on
$(\Bun_\calG)_{sm}$ and $D'\simeq (D')^{op}$. For more details, we
refer to \cite[\S~1]{BD}. \quash{We just mention that if $U\subset \Bun_\calG$ is a (smooth) open subscheme, then $\calD'|_U\simeq \calD_U\otimes \omega_{\Bun_\calG}^{1/2}|_U$, where $\calD_U$ is the usual sheaf of differential operators on $U$.}

Let $\Bun_{\calG,x}$ be the scheme classifying pairs
$(\calF,\beta)$, where $\calF$ is a $\calG$-torsor on $X$ and
$\beta$ is a trivialization of $\calF$ on $D_x=\Spec \calO_x$. It
admits a $(\hat{\frakg}_{c,x},K_x)$-action, and
$\Bun_{\calG,x}/K_x\simeq \Bun_\calG$. Now applying the standard
localization construction to the Harish-Chandra module $\on{Vac}_x$
(cf. \cite[\S~1]{BD}) gives rise to
\[\on{Loc}(\on{Vac}_x)\simeq\calD'\]
as critically twisted $D$-modules on $\Bun_\calG$. It induces a natural ring homomorphism
\[h_x: \frakZ_x\to \End(\Vac_x)\to \Gamma(\Bun_\calG, D').\]
In fact, the mappings $h_x$ can be organized into a horizontal morphism $h$
of $\calD_X$-algebras over $X$ (we refer to \cite[\S~2.6]{BD} for
the generalities of $\calD_X$-algebras and \cite[\S~2.8]{BD} for the horizontality of $h$),
\begin{equation}\label{hglob}
h: \calE nd(\calV ac_X)\to\Gamma(\Bun_\calG,D')\otimes\calO_X.
\end{equation}
\quash{Namely, by varying $x$, we get a scheme $\Bun_{\calG, X}$ over $X$ classifying a $\calG$-torsor $\calF$, a point $x\in X$ and a trivialization of $\calF$ along the graph $\Gamma_x$ of $x$. According to \cite[\S~2.8]{BD}, this is a crystal of schemes over $X$, and the natural map $\Bun_{\calG,X}\to \Bun_\calG\times X$ by forgetting the trivialization is 

 Indeed, the construction of $h$ is as in \cite[\S~2.8]{BD}, where the horizontality of $h|_U$ is also proved. But if there is an algebra homomorphism of $\calD_X$-algebras that is horizontal over some open subscheme $U\subset X$, then this homomorphism is horizontal over the whole curve.}

By  taking the horizontal sections, one obtains a mapping
\begin{equation}\label{hhorizontal}
h_{\nabla}:H_{\nabla}(X,\calE nd(\calV
ac_X))\to\Gamma(\Bun_{\calG},D').
\end{equation}
Therefore, \eqref{hhorizontal} can be rewrite as a mapping
\begin{equation}\label{hhorizontal1}h_{\nabla}:\on{Fun}\on{Op}_{{^L}\frakg}(U)\twoheadrightarrow\on{Fun}\on{Op}_{{^L}\frakg}(X)_{\calG}\to
\Gamma(\Bun_{\calG},D').\end{equation}

The mapping \eqref{hhorizontal1} is a quantization of a classical
Hitchin system. Namely, as explained in \cite[\S
3.1.13]{BD} (or see \S~\ref{endo alg} for a review), there is a natural filtration on the algebra $\on{Fun}\on{Op}_{{^L}\frakg}(U)$ whose
associated graded is the algebra of functions on the classical
Hitchin space
\begin{equation}\label{Hitchin base}
\on{Hitch}(U)=\Gamma(U,\frakc^*\times^{\bbG_m}\omega_U^\times).
\end{equation}
Here, using the fact that $\calG$ is unramified over $U$, we regard 
$$\frakc^*=\frakg^*/\!\!/G=\Spec F[\frakg^*]^G$$ as a scheme over $U$. 
On the other hand, there is a natural
filtration on $\Gamma(\Bun_\calG,D')$ coming from the order of the
differential operators. Then \eqref{hhorizontal1} is strictly
compatible with the filtration and the associated graded map gives
rise to the classical Hitchin map
\[h^{cl}: T^*\Bun_\calG\to \on{Hitch}(U).\]
Its image is the closed
subscheme $\on{Hitch}(X)_\calG\subset\on{Hitch}(U)$ whose algebra of
functions is the associated graded of $\on{Fun}\on{Op}_{{^L}\frakg}(X)_\calG$.

Let ${^L}G$ be the Langlands dual group of $G$, which is of adjoint type. The following theorem summarizes the main result of \cite{BD}. 
\begin{thm}\label{BDmain}
Assume that the line bundle $\omega_{\Bun_\calG}^{1/2}$ exists.
Let $\chi\in
\on{Op}_{{^L}\frakg}(X)_{\calG}\subset\on{Op}_{{^L}\frakg}(U)$ be a
closed point, which gives rise to a ${^L}\frakg$-oper $\calE$ on
$U$. Let
$$\varphi_\chi:\on{Fun}\on{Op}_{{^L}\frakg}(X)_{\calG}\to\bbC$$ be the
corresponding homomorphism of $\bbC$-algebras. Then
\[\Aut_{\calE}:=(\calD'\otimes^L_{\on{Fun}\on{Op}_{{^L}\frakg}(X)_{\calG},\varphi_\chi}\bbC)\otimes\omega_{\Bun_\calG}^{-1/2}\]
is a Hecke-eigensheaf on $\Bun_\calG$ with respect to $\calE$
(regarded as a ${^L}G$-local system).
\end{thm}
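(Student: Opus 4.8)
The plan is to recognize Theorem \ref{BDmain} as a direct adaptation of the main construction of \cite{BD}, carried out in the slightly more general setting where $\calG$ is allowed to be a non-constant group scheme with level structure. First I would set up the localization functor: the scheme $\Bun_{\calG,x}$ carries a $(\hat{\frakg}_{c,x},K_x)$-action with $\Bun_{\calG,x}/K_x \simeq \Bun_\calG$, so applying the Beilinson-Drinfeld localization construction to the Harish-Chandra module $\Vac_x$ produces the critically twisted $D$-module $\on{Loc}(\Vac_x)$ on $\Bun_\calG$. The key input is that, because $\Bun_\calG$ is good and $\Vac_x$ is the vacuum-type module of Lemma \ref{vacuum I}, this localization is identified with $\calD'$ itself (the sheaf of critically twisted differential operators), exactly as in \cite[\S~1]{BD}. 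This identification is what gives the ring homomorphism $h_x:\frakZ_x\to\Gamma(\Bun_\calG,D')$, and then the globalized, horizontal version $h$ of \eqref{hglob} assembles these over $X$.

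Next I would pass to horizontal sections to obtain \eqref{hhorizontal1}, i.e. the map $\on{Fun}\on{Op}_{{^L}\frakg}(X)_\calG\to\Gamma(\Bun_\calG,D')$. The main content is then to check the Hecke eigenproperty of $\Aut_\calE$. For this I would invoke the compatibility of the Feigin-Frenkel isomorphism \eqref{FeFr} with the Hecke/spherical functors: the center $\frakZ_x$ acting through $h_x$ governs how the Hecke modifications at a point $x\in U$ act on $\calD'$, and a point $\chi\in\on{Op}_{{^L}\frakg}(X)_\calG$ fixes a ${^L}\frakg$-oper, hence (by the oper-to-local-system procedure) a ${^L}G$-local system $\calE$ on $U$. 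Specializing $\calD'$ along $\varphi_\chi$ and untwisting by $\omega_{\Bun_\calG}^{-1/2}$ yields a $D$-module whose Hecke eigenvalue is precisely $\calE$; this is the Hecke eigensheaf property. The derived tensor product is used so that the construction behaves well even when $\chi$ is not a smooth point of $\on{Op}_{{^L}\frakg}(X)_\calG$.

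The one genuinely new point compared to \cite{BD} — and the step I expect to be the main obstacle — is justifying that the whole machinery (localization producing $\calD'$, the horizontality of $h$, and the Hecke-compatibility of $\varphi_x$) goes through verbatim when $\calG$ is non-constant and the $\Vac_x$ at the bad points are the parahoric-induced modules of Lemma \ref{vacuum I} rather than the usual vacuum module. The critical twist by $\omega_{\Bun_\calG}^{1/2}$ must exist (hypothesis of the theorem, guaranteed in the cases of interest by Proposition \ref{square root} or Remark \ref{sqr5}), and one must know that $\calL_{2c}$'s factorizable structure (Lemma \ref{fl}) is what makes $\calE nd(\calV ac_X)$ a commutative chiral algebra compatible with the Hitchin picture. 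Granting the results reviewed in \S\S\ref{KMCA}--\ref{recon}, however, the proof is simply: run the Beilinson-Drinfeld argument, citing \cite[\S~1, \S~2]{BD} for localization and horizontality and \cite[\S~5]{BD} for the eigensheaf property, noting at each step that the only modifications needed are the replacements $\Vac_x\rightsquigarrow\on{Ind}_{\hat\frakk_x}^{\hat\frakg_x}(\bbC_{-\rho_{\on{aff}}+\rho_P})$ and $\frakc^*$ regarded as a scheme over $U$, both of which have been arranged in the preceding sections.
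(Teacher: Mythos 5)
Your proposal matches the paper's approach exactly: the theorem is presented as a summary of the Beilinson--Drinfeld machinery (localization $\on{Loc}(\Vac_x)\simeq\calD'$, the horizontal map $h$ of \eqref{hglob}, passage to $h_\nabla$ in \eqref{hhorizontal1}, and the local-to-global Hecke eigenproperty from \cite[\S~5]{BD}), adapted verbatim to non-constant $\calG$ using the parahoric-induced vacuum modules of Lemma \ref{vacuum I}, with the derived tensor product inserted as a safeguard. The only small imprecision is your justification for the derived tensor: the issue is not smoothness of $\chi$ but that $\calD'$ is not known to be flat over $\on{Fun}\on{Op}_{{^L}\frakg}(X)_\calG$ for general level structures, as the paper's remark following the theorem explains.
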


\begin{rmk}The statement of the about theorem is weaker than the
main theorem in \cite{BD}, where $\calG$ is the
constant group scheme (the unramified case). In this case Beilinson-Drinfeld proved that
$\on{Op}_{{^L}\frakg}(X)_{\calG}=\on{Op}_{{^L}\frakg}(X)$ is the
space of ${^L}\frakg$-opers on $X$,  that
$\on{Fun}\on{Op}_{{^L}\frakg}(X)\simeq\Gamma(\Bun_G,D')$, and that $\calD'$ is flat over $\on{Fun}\on{Op}_{{^L}\frakg}(X)$. It then follows that in the definition of $\Aut_\calE$, the derived tensor product can be replaced by the underived tensor product, and that $\Aut_\calE$ is a non-zero
holonomic D-module.

The proof of these assertions relies on the fact that the
classical Hitchin map is a completely integrable system. However, for general level structures, we do not know whether $\calD'$ is always flat over $\on{Fun}\on{Op}_{{^L}\frakg}(X)_{\calG}$. In addition,
the automorphic D-modules constructed as above will not be holonomic, as $\on{Fun}\on{Op}_{{^L}\frakg}(X)_{\calG}\subset\Gamma(\Bun_\calG,D')$ might be too small. For example, this will be the case for the group scheme $\calG=\calG(0,2)$ introduced in \S~\ref{Proof}. 
\end{rmk}

Sometimes, we can remedy the situation by replacing $\on{Fun}\on{Op}_{{^L}\frakg}(X)_\calG$ by a larger commutative subalgebra in $\Gamma(\Bun_\calG,D')$. To state our result, we first give a brief review of the local ingredient needed in the proof of Theorem \ref{BDmain}. Let $x\in U\subset X$, so $\calG$ is unramified at $x$. Then $\Gr_{\calG,x}$ is the usual affine Grassmannian $\Gr_{G_0,x}$ of $G_0$ at $x$. Let $\on{Sph}_x=\on{D}\mbox{-mod}(\Gr_{G_0,x})^{G_0(\calO_x)}$ be the Satake category of $G_0(\calO_x)$-equivariant D-modules on $\Gr_{G_0,x}$, and let $\calS_x: \on{Rep}({^L}G)\to \on{Sph}_x$ denote the geometric Satake equivalence. As explained in \cite[\S~7.8]{BD}, there is a convolution action $\star$ of $\on{Sph}_x$ on the derived category of Harish-Chandra $(\hat{\frakg}_{c,x},G_0(\calO_x))$-modules.
The local Hecke eigen property of $\Vac_x$ (see \cite[\S~5.5]{BD}) asserts that for every $V\in \on{Rep}({^L}G)$ there is
a canonical isomorphism of Harish-Chandra $(\hat{\frakg}_{c,x},K_x)$-modules,
\begin{equation}\label{local Hecke}
 \calS_x(V)\star \Vac_x\simeq (\calE_x)_V \otimes_{\frakz_x} \Vac_x,
\end{equation}
where $\calE_x$ is the fiber over $x\in D_x$ of the universal ${^L}G$-torsor on $\on{Op}_{{^L}\frakg}(D_x)\hat{\times}D_x$, and $(\calE_x)_V$ is its twist by $V$, regarded as a finite projective $\frakz_x\simeq A_{{^L}\frakg}(D_x)$ module. In particular, the isomorphism commutes with the actions of $\frakz_x$ on both sides.
The localization functor intertwines the action of $\on{Sph}_x$ on the derived category of $(\hat{\frakg}_{c,x},G_0(\calO_x))$-modules and on the derived category of D-modules on $\Bun_\calG$. Then by letting $x$ move in $U$, one obtains a canonical isomorphism
\begin{equation}\label{global Hecke}
\calS_U(V)\star \calD'\simeq  \calE_V\otimes_{A_{{^L}\frakg}(U)} \calD',
\end{equation}
of D-modules on $\Bun_{\calG}\times X$, compatible with the actions of $A_{{^L}\frakg}(U)$ on both sides. Here $\calS_U$ is the family version of $\calS_x$ over $U$.

Now let $y\in X\setminus U$, and let $A_y\subset \End(\Vac_y)$ be a commutative subalgebra.
We regard $\Vac_x\otimes \Vac_y$ as the representation of $\hat{\frakg}_{c,x,y}:=\hat{\frakg}_{c,x}\oplus\hat{\frakg}_{c,y}/(\mathbf{1}_x-\mathbf{1}_y)$. Then by tensoring with $\Vac_y$, \eqref{local Hecke} induces a canonical isomorphism of Harish-Chandra $(\hat{\frakg}_{c,x,y},K_x\times K_y)$-modules,
\[
\calS_x(V)\star \Vac_x\otimes \Vac_y\simeq (\calE_x)_V \otimes_{\frakz_x} \Vac_x\otimes \Vac_y,
\]
compatible with the $A_y\otimes \frakz_x$-module structures. By the (two-points version) localization functor and allowing $x$ to move in $U$ (but fixing $y$), we see that the isomorphism \eqref{global Hecke} is compatible with the $A_y\otimes A_{{^L}\frakg}(U)$-module structures on both side. 

Now for every $y\in X\setminus U$, let $A_y$ be a commutative subalgebra $\End(\Vac_y)$ containing the image of $\frakZ_y\to \End(\Vac_y)$. Then the map in \eqref{hhorizontal1} can be upgraded to a map
$$A_{{^L}\frakg}(U)\otimes \prod_y A_y\to  \Gamma(\Bun_\calG, D').$$ 
Let $A$ denote its image. Then $A$ is a commutative subalgebra of $\Gamma(\Bun_\calG, D')$ containing $\on{Fun}\on{Op}_{{^L}\frakg}(X)_\calG$. Let $\pi: \Spec A\to \Op_{{^L}\frakg}(X)_\calG$ denote the corresponding map of schemes induced by this inclusion.
\begin{cor}\label{variant}
Assume that $\calD'$ is $A$-flat.
Then for every $\chi\in \Spec A$, the corresponding D-module 
$$\Aut_\calE:=\omega_{\Bun_\calG}^{-1/2}\otimes (\calD'\otimes_{A,\varphi_\chi}\bbC)$$ is a Hecke eigensheaf with eigenvalue $\calE$, where $\varphi_\chi:A\to\bbC$ is the homomorphism corresponding to $\chi$, and $\calE$ is the ${^L}G$-local system on $U$ corresponding to the oper $\pi(\chi)\in \on{Op}_{{^L}\frakg}(X)_{\calG}\subset \on{Op}_{{^L}\frakg}(U)$.
\end{cor}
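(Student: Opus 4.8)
The plan is to mimic the proof of Theorem \ref{BDmain} (i.e.\ the Beilinson--Drinfeld argument) essentially verbatim, with the only change being that the ``$\on{Op}$-parametrized'' family $\calD'\otimes_{\on{Fun}\on{Op}_{{^L}\frakg}(X)_\calG}$ is replaced throughout by the ``$\Spec A$-parametrized'' family $\calD'\otimes_A$. First I would record the two structural inputs that make this work: (a) the localization identification $\on{Loc}(\Vac_x)\simeq\calD'$ of twisted $D$-modules on $\Bun_\calG$, which is what produces the ring map $\frakZ_x\to\Gamma(\Bun_\calG,D')$ and, more generally (using the two-point localization at $x\in U$ and the fixed $y\in X\setminus U$ together with the extra commuting operators in $A_y$), the ring map $A_{{^L}\frakg}(U)\otimes\prod_y A_y\to\Gamma(\Bun_\calG,D')$ whose image is $A$; and (b) the global Hecke property \eqref{global Hecke}, namely the canonical isomorphism $\calS_U(V)\star\calD'\simeq\calE_V\otimes_{A_{{^L}\frakg}(U)}\calD'$ on $\Bun_\calG\times U$, compatible with the $A_{{^L}\frakg}(U)$-actions on both sides, and --- by the paragraph just before the corollary --- also compatible with the full $A_{{^L}\frakg}(U)\otimes\prod_y A_y$-action, hence with the $A$-action.

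Next I would carry out the base change. Fix $\chi\in\Spec A$ with homomorphism $\varphi_\chi\colon A\to\bbC$, and let $\calE$ be the ${^L}G$-local system on $U$ attached to the oper $\pi(\chi)\in\on{Op}_{{^L}\frakg}(X)_\calG\subset\on{Op}_{{^L}\frakg}(U)$; note that the composite $A_{{^L}\frakg}(U)\twoheadrightarrow\on{Fun}\on{Op}_{{^L}\frakg}(X)_\calG\hookrightarrow A\xrightarrow{\varphi_\chi}\bbC$ is precisely the character of $A_{{^L}\frakg}(U)$ corresponding to that oper, so the fiber $\calE_V\otimes_{A_{{^L}\frakg}(U),\varphi_\chi}\bbC$ is the associated bundle $\calE_V$ in the usual sense. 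Applying $-\otimes_{A,\varphi_\chi}\bbC$ to \eqref{global Hecke} and using that $\calS_U(V)\star-$ commutes with this colimit (the Satake convolution is $\calO$-linear in the $\Bun_\calG$-direction and $\calS_U(V)$ is a fixed perverse sheaf), one gets
\[
\calS_U(V)\star\bigl(\calD'\otimes_{A,\varphi_\chi}\bbC\bigr)\;\simeq\;\calE_V\otimes_{\bbC}\bigl(\calD'\otimes_{A,\varphi_\chi}\bbC\bigr),
\]
and then tensoring both sides by $\omega_{\Bun_\calG}^{-1/2}$ turns this into the Hecke-eigensheaf identity for $\Aut_\calE$ with eigenvalue $\calE$, allowing $x$ to move over $U$ and checking the compatibilities (unit object, tensor structure, associativity constraints of $\calS_U$) exactly as in \cite[\S~5.5, \S~7.8]{BD}. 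The flatness hypothesis on $\calD'$ over $A$ is used here precisely to know that $-\otexp$---more precisely $-\otimes_A^L\bbC$ computes the naive tensor product, so that $\calD'\otimes_{A,\varphi_\chi}\bbC$ is concentrated in a single degree and the argument does not have to be repeated in the derived category; equivalently, it is what lets us replace the derived tensor product in Theorem \ref{BDmain} by the underived one in this setting.

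The main obstacle, and the only real content beyond bookkeeping, is verifying that \eqref{global Hecke} really is $A$-linear and not merely $\bigl(\on{Fun}\on{Op}_{{^L}\frakg}(X)_\calG\bigr)$-linear: one must check that the extra endomorphisms coming from $A_y$ (for $y\in X\setminus U$), which act on the Hecke eigensheaf through the two-point localization $\Bun_{\calG,x,y}$, commute with the Satake convolution at the moving point $x\in U$ and act the same way on both sides of the isomorphism. This is exactly what the displayed two-point computation preceding the corollary establishes --- the isomorphism $\calS_x(V)\star\Vac_x\otimes\Vac_y\simeq(\calE_x)_V\otimes_{\frakz_x}\Vac_x\otimes\Vac_y$ is compatible with the $A_y\otimes\frakz_x$-action --- so the proof of the corollary mostly amounts to invoking that computation, passing to the localization, and letting $x$ vary; the remaining points (that $A$ is generated by the images of $A_{{^L}\frakg}(U)$ and the $A_y$, and that the local systems attached to $\pi(\chi)$ and to the induced character of $A_{{^L}\frakg}(U)$ agree) are immediate from the definition of $A$ and of $\pi$.
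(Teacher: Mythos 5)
Your proposal is correct and follows essentially the same route as the paper's own (very short) proof: the content is entirely in observing that \eqref{global Hecke} is $A$-linear (established by the two-point localization computation in the paragraph before the corollary), applying the base change $\otimes^L_{A,\varphi_\chi}\bbC$ to that isomorphism, and using $A$-flatness of $\calD'$ to replace the derived tensor product by the ordinary one. Your write-up is just a more explicit expansion of the two inputs the paper invokes; you correctly identify the $A$-linearity of the global Hecke isomorphism as the only substantive new point beyond Theorem \ref{BDmain}.
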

\begin{proof}As explained as above, the isomorphism \eqref{global Hecke} is compatible with the $A$-module structure on both sides. Then $\omega_{\Bun_\calG}^{-1/2}\otimes(\calD'\otimes^L_{A,\varphi}\bbC)$ is a Hecke eigensheaf with eigenvalue $\calE$. Finally $\calD'\otimes^L_{A,\varphi}\bbC=\calD'\otimes_{A,\varphi}\bbC$ by the flatness, giving the corollary.
\end{proof}

\quash{
We need to make use of the following. Assume we have a homomorphism $\calG_1\to \calG_2$ of two models of $G$, which is an isomorphism over $U\subset X$, and for every $x\in X\setminus U$, $(K_1)_x\subset (K_2)_x$ is a closed normal subgroup. Then $K=\prod_x (K_2)_x/(K_1)_x$ acts on $\Bun_{\calG_1}$ and the quotient is $\Bun_{\calG_2}$. 
The D-module $\calD_{\Bun_{\calG_2}}$ is (weakly) $K$-equivariant. 

\begin{lem}
The map $h_\nabla: \Op_{{^L}\frakg}(X)_{\calG_1}\to \Gamma(\Bun_{\calG_1},D')$ factors through $h_\nabla:\Op_{^{L}\frakg}(X)_{\calG_1}\to \Gamma(\Bun_{\calG_1},D')^K$.
\end{lem}
\begin{proof}
Note that the natural map
\[\frakZ_x\to \End \Vac_x\simeq \Vac_x^{K_x}\]
factors through $\frakZ_x\to \Vac_x^{N_{G(F_x)}(K_x)}\subset \Vac_x^{K_x}$.

\[\widehat{U}_{\crit}(\frakg_x)^{\calG(F_x)}\to (U(\frakg_x)/U(\frakg_x)\frakk_x)^{K_x}.\]

We consider the following more in general situation.
Let $K_1\subset K_2$ be a closed normal subgroup and $K_2/K_1$ is an affine algebraic group. Let $S$ be a scheme, equipped with a Harish-Chandra action $(\frakg, K_2)$. Let 
\end{proof}
}

\section{Some geometry of the local Hitchin map}\label{loc Hitchin}

This section is purely local. We fix a point $x\in X$ on the curve. To simplify the notation, we denote the local field $F_x$ by $F$, and its ring of integers by $\calO$, its maximal ideal $\frakm$. Likewise, we use $D$ (resp. $D^\times$) to denote the disc (resp. the punctured disc) around $x$. After choosing a uniformizer $t\in\frakm$, we have $F\simeq \bbC((t))$.
For a line bundle $\calL$ on $D=\Spec\calO$, let $\calL(m)$ denote $\calL(m\cdot x)$ where $x\in D$ is the closed point.

\subsection{The space of invariant polynomials}
Let $\frakg$ be a simple Lie algebra over a field $k$ of characteristic zero, and let $G$ (resp. $G_\ad$ denote the corresponding simply-connected (resp. adjoint) algebraic group. Let 
$$\chi:\frakg^*\to\frakc^*$$ 
denote the usual Chevalley map, where $\frakc^*:=\frakg/\!\! /G=\Spec k[\frakg^*]^G$ is the GIT quotient of $\frakg^*$ under the adjoint action of $G$.
For our purpose, we need to recall some finer structures on $\frakc^*$. 

Let ${^L}\frakg$ be the dual Lie algebra over $k$, and let $\frakc_{{^L}\frakg}= k[{^L}\frakg]^{{^L}G}$ denote its space of characteristic polynomials. Let $\on{Out}({^L}\frakg)$ be the group of outer automorphisms of ${^L}\frakg$. 
Then  the Lie algebra $\frakg$ gives a homomorphism $\psi: \Gal(\overline k/k)\to \on{Out}(\frakg_{\overline k})=\on{Out}({^L}\frakg)$ up to conjugacy. 
Then we have the canonical isomorphism
\[\frakc^*\simeq (\frakc_{{^L}\frakg}\otimes\overline k)^{\Gal(\overline k/k)}.\]
Now we give another description of $\frakc_{{^L}\frakg}$ with the action of $\on{Out}({^L}\frakg)$.

Let $V_{{^L}\frakg}$ be the Lie algebra of the universal unipotent regular centralizer of ${^L}\frakg$, equipped with a  canonical $\bbG_m$-action. 
Let us recall its definition. Namely, let $J_{{^L}\frakg}\to \frakc_{{^L}\frakg}$ denote the universal centralizer group scheme \`{a} la Ng\^{o} \cite[\S~2]{Ng}. Then
$$V_{{^L}\frakg}: =\Lie J_{{^L}\frakg}|_0,$$
where $0\in \frakc_{{^L}\frakg}$ corresponds to the regular nilpotent conjugacy class.
By definition, for a regular nilpotent element $e$, there is a canonical $\bbG_m$-equivariant isomorphism $V_{{^L}\frakg}\simeq {^L}\frakg^e$, and 
after fixing a principal $\fraks\frakl_2$-triple $\{e,h,f\}$ of ${^L}\frakg$, there are the isomorphisms
\begin{equation}\label{flat coordinate}
V_{{^L}\frakg}\simeq f+{^L}\frakg^e\simeq \frakc_{{^L}\frakg}.
\end{equation}
This isomorphism is independent of the choice of the $\fraks\frakl_2$-triple, and therefore endows $\frakc_{{^L}\frakg}$ with a vector space structure.
In addition, if we fix a pinning $({^L}\frakg,{^L}\frakb,{^L}\frakt,e)$, $\Aut({^L}\frakg,{^L}\frakb,{^L}\frakt,e)$ will act on ${^L}\frakg^e$ by linear transformations. By transport of structure, we obtain an action of $\on{Out}({^L}\frakg)$ on $V_{{^L}\frakg}$, which is independent of the choice of the pinning and commutes with the $\bbG_m$-action. In addition, the isomorphism \eqref{flat coordinate} is 
$\on{Out}({^L}\frakg)$-equivariant. If we define a graded $k$-vector space as
\[{^L}V_\frakg:=(V_{{^L}\frakg}\otimes \overline k)^{\Gal(\overline k/k)},\]
we get a canonical $\bbG_m$-equivariant isomorphism
\begin{equation}\label{flat str}
{^L}V_\frakg \simeq \frakc^*,
\end{equation}
giving $\frakc^*$ a graded vector space structure.

\subsection{The local Hitchin map}

Now we apply the above discussion to $k=F$ so $\frakg$ be a simple Lie algebra over $F$. Then $V_{{^L}\frakg}$ has a natural $\bbG_m$-stable $\calO$-structure.
and therefore ${^L}V_{\frakg}$ admits a $\bbG_m$-stable $\calO$-lattice 
$${^L}V_{\frakg,\calO}:=(V_{{^L}\frakg}\otimes \overline{\calO})^{\Gal(\overline{F}/F)}.$$
Similarly there is a canonical $\bbG_m$-stable integral structure on $\frakc^*$, denoted by $\frakc^*_{\calO}$.  The isomorphism \eqref{flat str} is compatible with the integral structures.

Let 
\[{^L}V_{\frakg,\calO}=\bigoplus_i {^L}V_{\frakg,d_i}\]
denote the grading under the action of $\bbG_m$. We arrange the weights appearing in the above decomposition into a non-decreasing sequence $\{d_1,\ldots,d_\ell\}$ such that $d_i$ appears  in the sequence $\dim {^L}(V_{\frakg,d_i}\otimes F)$ times. These positive numbers are sometimes called the (fundamental) degrees of $\frakg$. By \eqref{flat str}, for a $1$-dimensional vector space over $F$ (or an invertible $\calO$-module) $\calL$, 
\[\frakc^*\times^{\bbG_m}\calL^\times\simeq \bigoplus_i \calL^{d_i}\otimes {^L}V_{\frakg,d_i}.\]

Now we regard the Chevalley map $\chi$ as a map of $\bbC$-indschemes.
\quash{We identify $\frakg^*$ with $\frakg\otimes_F \omega_F$ as $k$-indschemes via the residue of the Killing form 
$$\frakg\times (\frakg\otimes\omega_F)\stackrel{B}{\longrightarrow} \omega_F\stackrel{\Res_F}{\longrightarrow} \bbC.$$ } It induces the local Hitchin map of ind-schemes
\[h^{cl}_{x}:\frakg^*\otimes\omega_F\to \frakc^*\times^{\bbG_m}\omega_F^\times=\bigoplus_i \omega_F^{d_i}\otimes {^L}V_{\frakg,d_i}.\]
We will be interested in the image of certain $\calO$-lattices of $\frakg^*$ under the local Hitchin map.

\subsection{Local Hitchin maps for Moy-Prasad subgroups}

Let $P\subset G(F)$ be a parahoric subgroup. It admits a filtration by normal subgroups (the Moy-Prasad filtration)
\[P=P(0)\rhd P(1)\rhd P(2)\rhd\cdots\]
 where $L_P:=P(0)/P(1)$ is the Levi quotient of $P$. Let 
 \[\frakg\supset \frakp=\frakp(0)\supset \frakp(1)\supset\frakp(2)\supset\cdots,\]
denote the corresponding filtration on Lie algebras. Then the exponential map induces isomorphisms
\[\frakp(i)/\frakp(i+1)\simeq P(i)/P(i+1), \quad i\geq 1.\]
We denote $\frakp(1)/\frakp(2)\simeq P(1)/P(2)$ by $V_P$. 

\quash{Recall its definition. Let $x$ be a point in the building of $\frakg$ such that the Lie algebra of the stabilizer of $x$ is $\frakp$. Then one defines $\frakp_{x,r}$ be the Lie algebra generated by $\frakt$ and $\frakg_{\al}$}

There is an integer $m$ such that $\frakm\frakp(i)=\frakp(i+m)$. Therefore, we can extend the above filtration of $\frakp$ to a filtration of $\frakg$ by $\calO$-lattices given by
\[\frakp(i)=\frakm^{-N}\frakp(i+Nm), \quad N\gg 0.\]
Note that for $i<0$, $\frakp(i)$ is not a subalgebra. Let $\frakp(i)^\vee$ be the $\calO$-dual of $\frakp(i)$ in $\frakg^*$, and let $\frakp(i)^\perp=\frakp(i)^\vee\otimes_\calO\omega_\calO$. We have the chain of $\calO$-submodules in $\frakg^*\otimes\omega_F$
\[\cdots\subset \frakp(0)^\perp\subset \frakp(1)^\perp \subset\frakp(2)^\perp\cdots.\]

\begin{prop}\label{size of image}
The following diagram is commutative
\[\begin{CD}
\frakp(n)^\perp @>>>\frakg^*\otimes\omega_F\\
@VVV@VV h^{cl}_{x}V\\
\bigoplus_i \omega_{\calO}^{d_i}((d_i-\lceil\frac{d_i(1-n)}{m}\rceil)) \otimes {^L}V_{\frakg,d_i}@>>>\bigoplus_i \omega_F^{d_i}\otimes {^L}V_{\frakg,d_i}.
\end{CD}\]
\end{prop}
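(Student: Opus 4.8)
The plan is to reduce the statement to a local, $\bbG_m$-equivariant computation about the Chevalley map and then track the orders of poles/zeros carefully.

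First I would recall the structure of the Moy-Prasad filtration. Passing to the algebraic closure, the parahoric $\frakp$ becomes a standard parahoric in $\frakg\otimes\overline F$, and the lattices $\frakp(i)$ become sums of root spaces $\frakm^{a_\alpha(i)}\frakg_\alpha$ (plus a piece of $\frakt$), where the exponents $a_\alpha(i)$ grow linearly: precisely, since $\frakm\frakp(i)=\frakp(i+m)$, the exponent on each root line increases by $1$ as $i$ increases by $m$. Dually, $\frakp(n)^\perp=\frakp(n)^\vee\otimes\omega_\calO$ is again a sum of root-line lattices, with the exponent on $\frakg_\alpha$ being something like $\lceil \tfrac{\text{stuff}-n}{m}\rceil$ after accounting for the shift by $\omega_\calO$ (the residue pairing costs one $t$). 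The key numerical input is that an element $\xi\in\frakp(n)^\perp$ has entries in $\frakg_\alpha$ with pole order bounded by a linear function of $n$.

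Next, the main point is the behavior of $\chi$ under scaling. The isomorphism \eqref{flat str} makes $\frakc^*$ a graded vector space with weights $d_1,\dots,d_\ell$, and the Chevalley map $\chi:\frakg^*\to\frakc^*$ is $\bbG_m$-equivariant where $\bbG_m$ acts on $\frakg^*$ by scaling (weight $1$) and on the $i$-th graded piece of $\frakc^*$ by weight $d_i$. Concretely, the $i$-th component of $\chi$ is a homogeneous polynomial of degree $d_i$ in the matrix entries of $\xi$. Therefore, if each entry of $\xi\in\frakg^*\otimes\omega_F$ written relative to the standard lattice $\frakg_\calO^*\otimes\omega_\calO$ has pole order at most $p$, then the $i$-th Hitchin component $h^{cl}_{i}(\xi)$ has pole order at most $d_i\cdot p$ relative to $\omega_\calO^{d_i}\otimes{}^LV_{\frakg,d_i}$. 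The content of the proposition is the \emph{sharp} bound: one must show the pole order of $h^{cl}_i$ on $\frakp(n)^\perp$ is exactly $d_i-\lceil\tfrac{d_i(1-n)}{m}\rceil$ (more precisely, $h^{cl}_i$ lands in the lattice with that many poles allowed), which means the naive estimate "$d_i$ times the worst entry" is not achieved; instead the linear growth of the Moy-Prasad exponents in the direction of the highest root conspires so that each monomial of the degree-$d_i$ invariant contributes the right total. I would prove this by reducing, after $\otimes\overline F$, to an explicit computation: a general $\xi\in\frakp(n)^\perp$ is a sum over roots $\alpha$ of $t^{-e_\alpha(n)}c_\alpha(t)X_\alpha$ plus a Cartan part, with $e_\alpha(n)$ affine-linear in $\langle\alpha,v\rangle$ and in $n/m$, and then for each homogeneous generator $P_i$ of $\bbC[\frakg]^G$ of degree $d_i$, one checks that every monomial $\prod X_{\alpha_j}$ appearing in $P_i$ satisfies $\sum_j e_{\alpha_j}(n)\le d_i-\lceil\tfrac{d_i(1-n)}{m}\rceil+(\text{the }\omega\text{-shift})$; the extremal monomials are those lying along a principal $\fraks\frakl_2$, and the Moy-Prasad grading being compatible with the principal grading (up to the normalization by $m$) gives exactly the claimed exponent. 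Commutativity of the diagram — i.e. that $h^{cl}_x$ carries $\frakp(n)^\perp$ into the asserted lattice — then follows, and since it suffices for the inclusion (no surjectivity is claimed), this finishes it.

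The hard part will be bookkeeping the three shifts simultaneously: the passage from $\frakp(n)$ to its dual $\frakp(n)^\vee$, the twist by $\omega_\calO$ in $\frakp(n)^\perp$, and the grading shift built into \eqref{flat str} (which itself involves the Galois descent from ${}^L\frakg$, so the $d_i$ and the lattice ${}^LV_{\frakg,d_i}$ carry a nontrivial Galois-invariance). Getting the ceiling function $\lceil\tfrac{d_i(1-n)}{m}\rceil$ exactly right — rather than off by $1$ — requires being careful about whether $\frakp(i)$ is defined with $\frakg_\alpha$ included or not at the boundary values, and about the normalization of the residue pairing. I expect the cleanest route is to first treat the split case with $P$ an Iwahori (so $m=h$, the Coxeter number, and the exponents are the usual ones), verify the formula there by the principal $\fraks\frakl_2$ computation, then deduce the general parahoric case by the standard "$\frakp$ is built from $\frakg_\calO$ by a rational cocharacter" argument, and finally descend through Galois using the $\on{Out}({}^L\frakg)$-equivariance of \eqref{flat coordinate} already established above.
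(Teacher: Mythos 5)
Your approach is genuinely different from the paper's, and the difference is substantive. The paper proves Proposition \ref{size of image} by invoking Reeder--Yu's theorem (which the paper cites as \cite[Theorem 4.1]{RY}), realizing the Moy--Prasad filtration via Kac's construction: one passes to the degree-$m$ Galois cover $E/F$, where the isomorphism $K:\frakp(n)\simeq(\frakg_0\otimes\frakm_E^n)^{\theta_P(\ga)\times\ga}$ identifies $\frakp(n)$ with the $F$-rational points of a \emph{standard} (shifted) lattice $\frakm_E^n\frakg_0\otimes\calO_E$. After dualizing, $\frakp(n)^\perp$ is identified with $\frakg^*\otimes\omega_F\cap\frakm_E^{-n}(\frakg_0^*\otimes\omega_{\calO_E})$ (equation \eqref{aux1}), and since over $E$ the bound on the Hitchin image is immediate from homogeneity of the Chevalley map, the proof reduces to a single elementary lemma computing $\omega_{\calO_E}^d(n)\cap\omega_F^d$. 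The ceiling $\lceil d_i(1-n)/m\rceil$ appears exactly once, at the very end, as the order of the first power of $t$ surviving this intersection.

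Your plan instead works over $\overline F$ with the root-space decomposition and tries to bound the pole order of each monomial of each homogeneous invariant $P_i$. There are two problems with this as written. First, the per-root pole orders $e_\alpha(n)$ of $\frakp(n)^\perp$ each involve their own ceiling $\lceil\cdot\rceil$ (the Moy--Prasad lattice is $\frakg_\alpha\otimes\frakm^{\lceil n/m-\dot\alpha(x_P)\rceil}$, roughly), and a sum of ceilings is in general strictly larger than the ceiling of the sum; so the naive ``monomial-by-monomial'' estimate $\sum_j e_{\alpha_j}(n)$ does not obviously produce the stated bound, which has a single ceiling applied to $d_i(1-n)/m$. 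The linearity you invoke ($\sum_j\alpha_j=0$ for an invariant monomial, plus $e_\alpha$ affine-linear in $\langle\alpha,x_P\rangle$) controls the linear part exactly, but the accumulated rounding errors from the per-root ceilings have to be shown not to spoil the bound. The paper's pass to $E$ is precisely the device that makes all the ceilings disappear (over $E$ everything is integral) so that only the final intersection with $F$ introduces a single ceiling. Second, your ``extremal monomials lie along a principal $\fraks\frakl_2$'' claim is neither justified nor actually needed for the inclusion, so it reads as a red herring; drop it. Your closing reduction ``Iwahori $\Rightarrow$ general parahoric by a rational cocharacter $\Rightarrow$ Galois descent'' is morally the Kac construction, but the cocharacter $x_P$ is genuinely rational when $m>1$, and making it integral \emph{is} the passage to $E$; you cannot sidestep it. If you want to salvage the monomial-counting route you would need to prove, per degree $d_i$, that for every monomial $\prod_jY_{\alpha_j}$ (with Cartan factors allowed) appearing in the generator $P_i$, the sum $\sum_j\lceil n/m+\dot\alpha_j(x_P)\rceil$ (suitably shifted) is bounded by $d_i-\lceil d_i(1-n)/m\rceil$; this is true but requires a nontrivial argument about the structure of invariants, and is exactly what Reeder--Yu's theorem packages for free.
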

\begin{rmk}
For $n\geq 1$, one shall regard this proposition as a classical limit of the main theorem of \cite{CK}.
\end{rmk}
\begin{proof}
The proof is a simple application of \cite[Theorem 4.1]{RY} in our set-up, which generalizes Kac's work (cf. \cite[Ch. 8]{Ka}) on the realization of (twisted) affine algebras via loop algebras. We will give a relatively self-contained account of Reeder-Yu's result. However, we shall warn the readers from the beginning that our notations are different from Reeder-Yu's.

We choose a pinning $(\frakg,\frakb,\frakt, e)$ of $\frakg$. Let $(\frakg_0,\frakb_0,\frakt_0,e_0)$ be a split form of $\frakg$ together with a pinning defined over $\bbC$. We fix once and for all an isomorphism
\begin{equation}\label{pinning}
(\frakg,\frakb,\frakt,e)\otimes_F\overline{F}\simeq (\frakg_0,\frakb_0,\frakt_0,e_0)\otimes_\bbC\overline{F},
\end{equation}
where $\overline F$ is an algebraic closure of $F$.
It induces a map 
\begin{equation}
\psi:\Gal(\overline F/F)\to \Aut(\frakg_0,\frakb_0,\frakt_0,e_0)\simeq \on{Out}(\frakg_0)
\end{equation} such that the natural action of $\ga\in\Gal(\overline{F}/F)$ on the left hand side corresponds to the action $\psi(\ga)\otimes\ga$ on the right hand side.
These data determine: 
\begin{enumerate}
\item an apartment $\calA$  corresponding to $\frakt$ in the building of $\frakg$;
\item a special point $x_0\in\calA$ corresponding to the parahoric algebra $(\frakg_0\otimes\overline\calO)^{\Gal(\overline{F}/F)}$; 
\quash{\item Using this point, we identify $\calA$ with  the vector space $\xcoch(T_0)^{\Gal(\overline{F}/F)}\otimes\bbR$, where $T_0$ is the torus in $G_0$ whose Lie algebra is $\frakt_0$; 
 An alcove $C\subset\calA$ whose closure contains the point $x_0$ and that is contained in the Weyl chamber in $\xcoch(T_0)\otimes\bbR$ determined by $\frakb_0$.}
\item a set of simple affine roots $\{\al_0,\ldots,\al_\ell\}$ such that $\al_1(x_0)=\cdots=\al_\ell(x_0)=0$ and that the vector parts of $\al_i, i=1,\ldots,\ell$ form the set of simple roots of $\frakg$ with respect to $\frakb$.
\end{enumerate}

Explicitly, we can identify $\calA$ with the $\bbR$-span of the $\Gal(\overline F/F)$-invariant subspace of the cocharacter group $\xcoch(T_{0,\ad})=\Hom_k(\bbG_m,T_{0,\ad})$, where $T_{0,\ad}$ is the adjoint torus with Lie algebra $\frakt_0$. We can also identify $x_0$ with the origin of $\xcoch(T_{0,\ad})^{\Gal(\overline F/F)}$.
If $\frakg=\frakg_0\otimes_{\bbC}F$ and if \eqref{pinning} arises as the base change (i.e. the split case), then $\{\al_1,\ldots,\al_\ell\}$ are the set of simple roots of $\frakg_0$ with respect to $\frakb_0$ (regarded as affine functions on $\calA$), and $\al_0=1-\theta$, where $\theta$ is the highest root. 

Let $\{a_i\}$ be Kac's labels of the affine Dynkin diagram of $\frakg$ (cf. \cite[Ch. 4]{Ka} TABLE Aff 1,2), and $r=1,2,3$ be the order of the image of $\psi:\Gal(\overline F/F)\to \Aut(\frakg_0,\frakb_0,\frakt_0,e_0)$. We regard affine roots as affine functions on $\calA$. Under the (usual) normalization
\[r\sum a_i\al_i=1.\]

Let $C$ be the (closed) alcove in $\calA$ defined by $\{x\in\calA\mid \al_i(x)\geq 1\}$. Clearly, we can prove the proposition just for standard parahorics, i.e. those corresponding to facets of $C$. For such a parahoric $P$, let $x_P$ denote the barycenter of the corresponding facet. We can write the affine coordinates with respect to $\{\al_0,\ldots,\al_\ell\}$  (a.k.a. the Kac coordinates) of $x_P$ as
\[(\frac{s_0}{m},\ldots,\frac{s_\ell}{m}),\]
where $s_i\in\{0,1\}$ and $m=r\sum a_is_i$.

Note that $m(x_P-x_0)$ pairing with any root of $\frakg$ is an integer and therefore defines a cocharacter
\[\eta_P: \bbG_m\to T_\ad,\]
where $T_\ad\subset G_\ad$ is the torus whose Lie algebra is $\frakt$.
The isomorphism \eqref{pinning} induces an isomorphism $\Hom_F(\bbG_m,T_\ad)\simeq \Hom_k(\bbG_m,T_{0,\ad})^{\Gal(\overline F/F)}$, and therefore
 we may regard $\eta_P$ as a cocharacter $\bbG_m\to T_0$.

Let $E/F$ be the (unique) Galois extension of degree $m$ inside $\overline F$. Then $\psi$ factors through $\Gal(E/F)\to\Aut(\frakg_0,\frakb_0,\frakt_0,e_0)$, and the isomorphism \eqref{pinning} descends to $E$. Following Kac, we define a torsion automorphism of $\frakg_0$ as
\[\theta_P: \Gal(E/F)\to\Aut(\frakg_0),\quad \theta_P(\ga)=\psi(\ga) \Ad(\eta_P(\zeta)^{-1}),\]
where $\zeta\in\mu_m$ corresponds to $\ga$ under the canonical identification $\Gal(E/F)=\mu_m$.
We can  write
\[\frakg_0=\bigoplus_{i\in \bbZ/m\bbZ} \frakg_0(i),\]
where $\frakg_0(i)$ is the weight space of $\theta$ corresponding to $i\in \bbZ/m\bbZ=\mu_m^*$. Explicitly, $\frakg_0(i)$ is the $\zeta^{-i}$-eigenspace of $\theta_P(\ga)$.

We endow $\frakg_0\otimes E$ with an action of $\Gal(E/F)$ given by $\theta_P(\ga)\otimes\ga$ for $\ga\in\Gal(E/F)$. Explicitly, let $u$ be a uniformizer of $E$ such that $t=u^m$ is a uniformizer of $F$. Then $\zeta=\ga(u)/u$ so $\ga$ acts on $\frakg_0\otimes u^i$ as $\psi(\ga)\eta_P(\zeta^{-1})\otimes \zeta^i$. 
In particular, for a generator $\ga$ of $\Gal(E/F)$, 
$(\frakg_0\otimes E)^{\theta_P(\ga)\times\ga}$ is the $u$-adic completion of $\bigoplus_{i\in \bbZ} \frakg_0(i)\otimes u^i$.

Note that as $F$-automorphisms of $\frakg_0\otimes E$,
\[\psi(\ga)\times\ga= \Ad(\eta_P(u^{-1}))(\theta_P(\ga)\times \ga) \Ad(\eta_P(u)).\]
Combining with \eqref{pinning}, we obtain the isomorphism constructed by Kac (\cite[Theorem 8.5]{Ka})
\begin{equation}\label{Kac isom}
K:\frakg=(\frakg\otimes_FE)^{\ga}\simeq (\frakg_0\otimes_k E)^{\psi(\ga)\times\ga}\stackrel{\Ad(\eta_P(u))}{\simeq} (\frakg_0\otimes_k E)^{\theta_P(\ga)\times\ga},
\end{equation}
for any choice $\ga$ of the generator of $\Gal(E/F)$. By abuse of notation, we also use $K$ to denote the induced embedding $\frakg\simeq (\frakg_0\otimes_k E)^{\theta_P(\ga)\times\ga}\to \frakg_0\otimes E$.

Let us have a more detailed analysis of the isomorphism $K$. First, 
$$K(\frakm^n\frakt)=\frakt_0^{\theta_P(\ga)}\otimes \frakm^n=\sum_{j\geq n} \frakt_0^{\psi(\ga)}\otimes u^{mj}.$$ 
Next, let $\al$ be an affine root of $\frakg$ and let $\frakg_\al$ denote the corresponding affine root space (which is one-dimensional over $\bbC$). Let $\{b_i\}$ be the set of roots of $\frakg_0$ that restrict to the linear part $\dot{\al}$ of $\al$, and $\{\frakg_{0,b_i}\}$ be the corresponding root spaces. Then
\[K(\frakg_\al)= \sum_i \frakg_{0,b_i} \otimes u^{\langle\eta_P,b_i\rangle+m\al(x_0)}\subset \frakg_0(j)\otimes u^j\]
for $j=\langle\eta_P,b_i\rangle+m\al(x_0)$.
Note that we can also rewrite this number as
\[m\dot{\al}(x_P-x_0)+m\al(x_0)=m\al(x_P).\]
Now recall that by the definition of the Moy-Prasad filtration, $\frakp(n)$ is the subalgebra of $\frakg$ generated by $\frakm^{\left\lfloor \frac{n}{m}\right\rfloor}\frakt$, and all the affine root subspaces $\frakg_{\al}$ such that $\al(x_P)\geq \frac{n}{m}$. Therefore, $K$ restricts to an isomorphism
\[K:\frakp(n)\simeq (\frakg_0\otimes_k \frakm_E^n)^{\theta_P(\ga)\times\ga}=\prod_{j\geq n} \frakg_0(j)\otimes u^j,\]
which is \cite[Theorem 4.1]{RY}

Note that by replacing $\frakg$ by $\frakg^*$ in the above paragraphs, we obtain an isomorphism $K^*: \frakg^*\simeq (\frakg^*_0\otimes E)^{\theta_P(\ga)\times \ga}$
and therefore we obtain
\[DK:\frakg^*\otimes \omega_F\simeq (\frakg^*_0\otimes_k \omega_E)^{\theta_P(\ga)\times\ga}\subset \frakg^*_0\otimes_k \omega_E, \quad \xi(t)\frac{dt}{t}\mapsto mK^*(\xi(t))\frac{du}{u}.\]
We need two properties of $DK$: (i) the following diagram is commutative,
\begin{equation}\label{aux2}
\begin{CD}
\frakg^*\otimes \omega_F@>DK>>\frakg^*_0\otimes \omega_E\\
@VVV@VVV\\
\frakc^*\times^{\bbG_m}\omega^\times_F@>>>\frakc^*_0\times^{\bbG_m}\omega^\times_E.
\end{CD}
\end{equation}
(ii) $K\otimes DK$ commutes with the pairings induced by the residue up to the scalar $m$. I.e.,
for $X\in \frakg, \ \xi\frac{dt}{t}\in \frakg\otimes\omega_F$,
\begin{equation}\label{inv res}
\Res_F (\xi,X)\frac{dt}{t}= m\Res_E (K^*(\xi),K(X))\frac{du}{u}=m\Res_E(DK(\xi\frac{du}{u}),K(X)).
\end{equation}

Since under the residue pairings, the orthogonal complement of $(\frakg_0\otimes \frakm_E^n)$ is $(\frakg^*_0\otimes \frakm_E^{-n}\omega_{\calO_E})$, and the orthogonal complement of $\frakp(n)$ is $\frakp(n)^\perp$,
we obtain from \eqref{inv res} that 
\begin{equation}\label{aux1}
\frakp(n)^\perp=DK^{-1}(\frakg^*\otimes\omega_F\cap \frakm_E^{-n}(\frakg^*_0\otimes\omega_{\calO_E})).
\end{equation}

Note that under the map $\frakg_0\otimes\omega_E\to \frakc_0\otimes \omega_E$, the closed subscheme
$\frakm_E^{-n}(\frakg^*_0\otimes\omega_{\calO_E})$ maps surjectively to $\bigoplus \omega_{\calO_E}^{d_i}(nd_i)$. This is clear for $n=0$ and therefore for all $n$.
On the other hand,
\begin{lem} Inside $\omega^d_{E}$,
\[ \omega_{\calO_E}^{d}(n)\cap \omega^d_F=  \omega_{\calO_F}^{d}(\lceil\frac{(d-n)}{m}\rceil).\]
\end{lem}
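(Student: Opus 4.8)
The plan is to reduce everything to an explicit computation in the discrete valuation ring $\calO_E=\bbC[[u]]$ over $\calO_F=\bbC[[u^m]]$ (where $t=u^m$), since both sides of the asserted equality are finitely generated $\calO_F$-submodules of the one-dimensional $E$-vector space $\omega_E^d$, so that it suffices to compare one $u$-adic valuation. Concretely, I would fix the $E$-basis $(du)^{\otimes d}$ of $\omega_E^d$ and rewrite each object as a fractional-ideal-type subset of $E\simeq\bbC((u))$: one has $\omega_{\calO_E}^d(n)=u^{-n}\calO_E\cdot(du)^{\otimes d}$; since $dt=mu^{m-1}\,du$, the $F$-line $\omega_F^d=F\cdot(dt)^{\otimes d}$ consists of the elements $c(u)\,(du)^{\otimes d}$ with $c\in u^{(m-1)d}\bbC((u^m))$, and $\omega_{\calO_F}^d(k)=t^{-k}\calO_F\cdot(dt)^{\otimes d}$ consists of those with $c\in u^{(m-1)d-mk}\bbC[[u^m]]$. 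The only place the ramification enters is the shift $v_u(dt)=m-1$ arising from the different, i.e. from $\omega_{\calO_F}\otimes_{\calO_F}\calO_E=u^{m-1}\omega_{\calO_E}$, so pinning down its sign and magnitude is the heart of the matter.

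Next I would compute the intersection directly. An element $c(u)(du)^{\otimes d}$ lies in $\omega_{\calO_E}^d(n)\cap\omega_F^d$ exactly when $c$ is a Laurent series in $u^m$ belonging to the coset $u^{(m-1)d}\bbC((u^m))$ and satisfying $v_u(c)\geq -n$; writing $c=\sum_{j\geq j_0}c_j u^{(m-1)d+mj}$, this is equivalent to $j_0\geq\lceil(-n-(m-1)d)/m\rceil$, and every such series occurs, so the intersection is the $\calO_F$-module generated by $u^{(m-1)d+m\lceil(-n-(m-1)d)/m\rceil}(du)^{\otimes d}$. Comparing with the description of $\omega_{\calO_F}^d(k)$ above, the lemma reduces to solving $(m-1)d-mk=(m-1)d+m\lceil(-n-(m-1)d)/m\rceil$ for $k$; extracting the integer $d$ from the ceiling via $\lceil a-d\rceil=\lceil a\rceil-d$ rewrites this as $d-k=\lceil(d-n)/m\rceil$, so that the intersection equals $\omega_{\calO_F}^d(d-\lceil(d-n)/m\rceil)$. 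This is the form of the twist that, after substituting $nd_i$ for $n$ and $d_i$ for $d$, produces exactly the exponent $d_i-\lceil d_i(1-n)/m\rceil$ of Proposition~\ref{size of image}; a further sanity check is $m=1$, where the intersection is visibly $\omega_{\calO_F}^d(n)$, matching $d-\lceil(d-n)/1\rceil=n$.

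I expect the only genuine obstacle to be the elementary but error-prone bookkeeping in the last two steps: getting the direction of the different-twist right --- it makes $\omega_{\calO_F}\otimes_{\calO_F}\calO_E$ the \emph{submodule} $u^{m-1}\omega_{\calO_E}$ of $\omega_{\calO_E}$, not the reverse --- and then handling the floor/ceiling rounding correctly when $m\nmid(d-n)$. No global input is needed here (no flatness of $\calV ac_X$, no goodness of $\Bun_\calG$): this is a self-contained computation over the discrete valuation ring $\calO_F$, and ``equality'' of the two finitely generated $\calO_F$-modules is established simply by matching the $u$-adic valuations of their generators.
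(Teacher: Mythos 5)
Your computation is correct and follows essentially the same route as the paper's own proof: substitute $t=u^m$, track the $u$-adic valuation of a generator, and round. The one point worth flagging is that your final answer, $\omega_{\calO_F}^{d}\bigl(d-\lceil\frac{d-n}{m}\rceil\bigr)$, does not literally match the displayed right-hand side of the lemma, which reads $\omega_{\calO_F}^{d}\bigl(\lceil\frac{d-n}{m}\rceil\bigr)$; here you are right and the displayed formula is off by the shift $d$ under the paper's convention $\calL(k)=\calL(k\cdot x)$. Indeed, the paper's own argument only establishes that the intersection consists of the elements $f(t)(dt/t)^{d}$ with $\on{ord}_t(f)\ge\lceil\frac{d-n}{m}\rceil$, which is exactly your module $t^{\lceil\frac{d-n}{m}\rceil-d}\calO_F\,(dt)^{d}=\omega_{\calO_F}^{d}\bigl(d-\lceil\frac{d-n}{m}\rceil\bigr)$, and it is your form of the exponent --- not the displayed one --- that reproduces the twist $d_i-\lceil\frac{d_i(1-n)}{m}\rceil$ asserted in Proposition \ref{size of image} and that survives the $m=1$ sanity check. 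So your bookkeeping (the shift $v_u\bigl((dt)^{d}\bigr)=(m-1)d$ from the different, and pulling the integer $d$ out of the ceiling) is exactly what is needed, and your statement is the one the paper actually uses downstream.
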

\begin{proof}Let $f(t)(\frac{dt}{t})^d\in \omega_F^d$, then $f(u^m)m^d(\frac{du}{u})^d\in \omega_{\calO_E}^d(n)$ if and only if 
$$m\on{ord}_t(f)-d\geq -n.$$ So $\on{ord}_t(f)\geq \lceil\frac{(d-n)}{m}\rceil$.
\end{proof}

Now the proposition follows easily from \eqref{aux2} \eqref{aux1} and the above lemma.
\end{proof}

\begin{rmk}\label{dual}
When studying harmonic analysis on $\frakg$, it is customary to regard the (local) Hitchin map as a map $\frakg\otimes \omega_F\to \frakc\times^{\bbG_m}\omega_F$, and it's natural to describe the image of $\frakp(n)$ under this map. But this reduces the study the image of $\frakp(n)^\perp$ under $h_x^{cl}$ by the following reason.
Note that the isomorphism $\frakg^*\simeq \frakg$ induced by the Killing form intertwines $K$ and $K^*$. Therefore,
\begin{equation*}
\frakp(n)^\perp= \frakp(1-n)\otimes_{\calO}\omega_\calO(1)=\frakp(1-n-m)\otimes_{\calO}\omega_{\calO}.
\end{equation*}
\end{rmk}

\medskip

We consider a few well-known special cases of Proposition \ref{size of image}.
If $n=0$, it says that the local Hitchin map restricts to a map
\[\frakp(0)^\perp\to \bigoplus_{i}\omega^{d_i}_{\calO}(\left\lfloor\frac{(m-1)d_i}{m} \right\rfloor)\otimes {^L}V_{\frakg,d_i}\]
In particular, 
\begin{itemize}
\item if $m=1$, so $\frakp\simeq \frakg_0\otimes_k \calO$ is hyperspecial, then we have the unramified local Hitchin map 
$$\frakp(0)^\perp\to \bigoplus_{i}\omega_{\calO}^{d_i}\otimes {^L}V_{\frakg,d_i}=\frakc^*_{\calO}\times^{\bbG_m}\omega^\times_{\calO}.$$

\item If $m$ is the (twisted) Coxeter number of $\frakg$, then $\frakp$ must be an Iwahori subalgebra and the local Hitchin map restricts to the map
$$\frakp(0)^\perp\to \bigoplus_{i}\omega_{\calO}^{d_i}(d_i-1)\otimes {^L}V_{\frakg,d_i}.$$
\quash{
\item If $m=2$, then $\frakp$ is the so-called Gross parahoric and we have
$$\frakp(0)^\perp \to \bigoplus_{i}\omega_{\calO}^{d_i}(\left\lfloor\frac{d_i}{2}\right\rfloor)\otimes V_{\frakg,d_i}.$$}
\end{itemize}
It's known that in both cases the maps are surjective.

If $n=1$, it says that the local Hitchin map restricts to a map
\begin{equation}\label{image p(1)}
\frakp(1)^\perp\to\bigoplus_{i}\omega^{d_i}_{\calO}(d_i)\otimes V_{\frakg,d_i}\simeq \frakc_{\calO}\times^{\bbG_m}\omega_{\calO}(1)^\times=: \on{Hitch}(D)_{\on{RS}},
\end{equation}
which is known to be surjective if $\frakp$ is an Iwahori subalgebra (so $\frakp(1)=[\frakp(0),\frakp(0)]$). Since for a general parahoric subalgebra $\frakp$, $\frakp(1)^\perp$ contains the orthogonal complement of the pro-nilpotent radical of some Iwahori subalgebra, we see that
\begin{cor}
The map \eqref{image p(1)} is surjective for every $\frakp$.
\end{cor}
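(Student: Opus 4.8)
The plan is to reduce the statement for an arbitrary parahoric subalgebra $\frakp$ to the case of an Iwahori subalgebra, where surjectivity of \eqref{image p(1)} is already asserted (and follows from the standard theory of the Hitchin map with regular singular structure, since $\frakp(1)=[\frakp(0),\frakp(0)]$ when $\frakp$ is Iwahori). The key point is a containment of $\calO$-lattices: if $\fraki\subset\frakp$ is an Iwahori subalgebra contained in $\frakp$, then the pro-nilpotent radical $\fraki(1)$ of $\fraki$ satisfies $\fraki(1)\subset\frakp(1)$, and hence, after dualizing, $\fraki(1)^\perp\supset\frakp(1)^\perp$. Wait—I need the opposite inclusion, so let me be careful: I want $\frakp(1)^\perp$ to contain a lattice whose image under $h^{cl}_x$ is already known to be everything. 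So the correct statement to use is that $\frakp(1)^\perp$ \emph{contains} $\fraki(1)^\perp$ for a suitable Iwahori $\fraki$, which is exactly what the paper says in the sentence preceding the corollary.

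First I would fix a standard Iwahori subalgebra $\fraki$ with $\fraki\subset\frakp$, chosen so that its alcove has the facet of $\frakp$ in its closure; concretely one takes the Iwahori attached to a chamber $C$ whose closure contains the barycentric facet of $\frakp$. Then $\fraki(1)$, the pro-nilpotent (pro-unipotent) radical, sits inside $\frakp(1)$: this is a direct consequence of the description via affine root spaces, since $\frakp(1)$ is spanned (topologically) by $\frakm^{\lfloor 1/m\rfloor}\frakt$ together with the $\frakg_\al$ with $\al(x_\frakp)\geq 1/m$, and every affine root space appearing in $\fraki(1)$ has positive value on the relevant facet. Dualizing with respect to the residue pairing reverses inclusions and tensoring by $\omega_\calO$ preserves them, so $\frakp(1)^\perp\supset\fraki(1)^\perp$ inside $\frakg^*\otimes\omega_F$.

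Next, I would invoke surjectivity of \eqref{image p(1)} for the Iwahori $\fraki$: the map $\fraki(1)^\perp\to \on{Hitch}(D)_{\on{RS}}=\bigoplus_i \omega^{d_i}_\calO(d_i)\otimes V_{\frakg,d_i}$ is onto. (Here one uses that for an Iwahori the numerics of Proposition~\ref{size of image} with $n=1$ give precisely the lattice $\bigoplus_i\omega^{d_i}_\calO(d_i)\otimes V_{\frakg,d_i}$ — the ceiling $\lceil d_i(1-1)/m\rceil=0$ — together with the classical fact that the Hitchin map with a regular-singular pole at one point is surjective onto its base.) Since $h^{cl}_x(\frakp(1)^\perp)\supseteq h^{cl}_x(\fraki(1)^\perp)$ and the latter is all of $\on{Hitch}(D)_{\on{RS}}$, while Proposition~\ref{size of image} with $n=1$ already shows $h^{cl}_x(\frakp(1)^\perp)$ \emph{lands inside} $\on{Hitch}(D)_{\on{RS}}$, we conclude equality, i.e. the map \eqref{image p(1)} is surjective for $\frakp$.

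The main obstacle is really just making the lattice-inclusion $\fraki(1)\subset\frakp(1)$ precise, i.e. choosing the Iwahori correctly and checking the inclusion in terms of the Moy--Prasad/affine-root description; everything else is formal once that is in hand. A minor subtlety is confirming that surjectivity of \eqref{image p(1)} in the Iwahori case is genuinely available — the paper asserts it ("known to be surjective if $\frakp$ is an Iwahori subalgebra"), so I would simply cite that, rather than reprove it.
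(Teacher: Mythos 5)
Your overall strategy is exactly the paper's: pick an Iwahori $\fraki\subset\frakp$, note that $\frakp(1)^\perp\supset\fraki(1)^\perp$, invoke surjectivity of $\fraki(1)^\perp\to\on{Hitch}(D)_{\on{RS}}$, and observe that Proposition~\ref{size of image} with $n=1$ guarantees the image of $\frakp(1)^\perp$ already lies in $\on{Hitch}(D)_{\on{RS}}$. That is the right skeleton and matches the one-line proof the paper gives.

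However, your justification of the key lattice containment is internally inconsistent. In your first paragraph you write $\fraki(1)\subset\frakp(1)$, correctly dualize it to $\fraki(1)^\perp\supset\frakp(1)^\perp$, and note that this is the \emph{wrong} direction. Then in the second paragraph you re-assert the \emph{same} inclusion $\fraki(1)\subset\frakp(1)$, but now dualize it to the opposite conclusion $\frakp(1)^\perp\supset\fraki(1)^\perp$. Both the inclusion and the second dualization are wrong; they happen to cancel so that the conclusion you carry forward is the true one, but the reasoning is unsound. The correct statement is that for $\fraki\subset\frakp$ the pro-nilpotent radicals go the other way: $\frakp(1)\subset\fraki(1)$. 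In the Moy--Prasad picture, an affine root space $\frakg_\al$ lies in $\frakp(1)$ precisely when $\al(x_\frakp)>0$, and since $x_\frakp$ lies in the closure of the alcove of $\fraki$ while $x_\fraki$ is a regular (interior) point, $\al(x_\frakp)>0$ forces $\al(x_\fraki)>0$, hence $\frakg_\al\subset\fraki(1)$; but the converse fails because any root of the Levi of $\frakp$ vanishes at $x_\frakp$ while being strictly positive at $x_\fraki$. (Compare the finite-dimensional analogue: the nilradical of a parabolic $\frakp\supset\frakb$ is \emph{contained in} the nilradical of $\frakb$.) Dualizing $\frakp(1)\subset\fraki(1)$ then legitimately yields $\fraki(1)^\perp\subset\frakp(1)^\perp$, i.e.\ $\frakp(1)^\perp\supset\fraki(1)^\perp$, and the rest of your argument goes through. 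Please fix the intermediate inclusion and its justification accordingly.
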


\medskip

Now we consider the special case when $\theta_P:\mu_m\to\Aut(\frakg_0)$ corresponding to the parahoric $P$ constructed above is principal (or called $N$-regular as in \cite{P}). This means that $\theta_P$ is
conjugated by an inner automorphism to
\[\theta'_m=\psi \rho: \mu_m\to \Aut(\frakg_0),\]
where $\rho$ as usual denotes the half sum of positive roots of $\frakg_0$ (with respect to $\frakb_0$). In this case, there exists a regular nilpotent element $e'$ of $\frakg_0$ (maybe different from $e_0$ we fixed before) in $\frakg_0(1)$.

\quash{According to Kac (\cite[Theorem 8.6]{Ka}), $\theta_P$ and $\theta'$ are conjugate if and only if the Kac coordinate of $x_P$ can be transferred to $(\frac{r+1-h_{\psi}}{m},\frac{1}{m},\ldots,\frac{1}{m})$ under the Iwahori-Weyl group of $G(F)$. }
Clearly, $m=1$ (so that $\frakp$ is hyperspecial) is the case. In addition, it is proved in \cite[Corollary 5.1]{RY} that if $m$ is a regular elliptic number of $\frakg_0$, then $\theta_P$ is principle. In particular, if $P=I$ is an Iwahori subgroup, then $\theta_I$ is principal.

Assume that $\theta_P$ is principal. Let us denote 
$$\on{Hitch}(D)_{\frakp(n)}=\bigoplus_i\omega_{\calO}^{d_i}(d_i-\lceil\frac{d_i(1-n)}{m}\rceil)\otimes {^L}V_{\frakg,d_i}\subset \on{Hitch}(D^\times):=\frakc^*\times^{\bbG_m}\omega_F^\times.$$
\begin{prop}\label{surj}
Assume that $\theta_P$ is principle. Then the map 
$$\frakp(2)^\perp\to\on{Hitch}(D)_{\frakp(2)}$$ is surjective. In particular, the map $\on{Fun}\on{Hitch}(D)_{\frakp(2)}\to \on{Fun}\frakp(2)^\perp$ is injective.
\end{prop}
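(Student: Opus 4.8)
The plan is to produce a scheme-theoretic section of the restricted Hitchin map $h^{cl}_x|_{\frakp(2)^\perp}\colon \frakp(2)^\perp\to\on{Hitch}(D)_{\frakp(2)}$ (the target is correct by Proposition \ref{size of image} for $n=2$, since $d_i-\lceil d_i(1-2)/m\rceil=d_i+\lfloor d_i/m\rfloor$): such a section immediately gives surjectivity, and a surjection of affine schemes dualizes to an injection of coordinate rings, which is the last assertion. The section will be a twisted form of the Kostant section, and principality of $\theta_P$ is exactly what makes this twist fit inside $\frakp(2)^\perp$.

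I keep the notation of the proof of Proposition \ref{size of image}. Since $\theta_P$ is principal, fix a regular nilpotent $e'\in\frakg_0(1)$ and complete it to an $\fraks\frakl_2$-triple $\{e',h',f'\}$ adapted to the pinning (so that $\psi$ preserves it); then $h'$ is the principal coweight $2\rho^\vee$, $h'\in\frakg_0(0)$, and $f'\in\frakg_0(-1)$. By Kostant's theorem $\chi$ restricts to an isomorphism from the affine slice $f'+\frakg_0^{e'}$ onto $\frakc_0$, where $\frakg_0^{e'}=\bigoplus_i\bbC w_i$ with $w_i$ of $h'$-weight $2(d_i-1)$, hence of $\theta_P$-degree $\equiv d_i-1\pmod m$; after normalizing the $w_i$ suitably, $\bbG_m$-equivariance for the Kostant torus action yields $P_i(f'+\sum_j a_jw_j)=a_i+Q_i(\{a_j:d_j<d_i\})$ for the basic invariants $P_i$. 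Via the Kac isomorphism $K$ and the Killing form (cf. Remark \ref{dual}), $\frakp(2)^\perp\simeq\frakp(-1)\otimes\omega_\calO(1)$ is identified with the $\calO$-lattice $\{\sum_{j\ge -1}X_ju^j\,\tfrac{du}{u}:X_j\in\frakg_0(j)\}$ inside $\frakg_0\otimes\omega_E$; and using \eqref{aux2} together with the computation of $\omega_\calO^{d_i}(d_i+\lfloor d_i/m\rfloor)$ in the uniformizer $u$, the $E$-form of $\on{Hitch}(D)_{\frakp(2)}$ becomes $\bigoplus_i u^{-m\lfloor d_i/m\rfloor}\bbC[[u^m]]\,(\tfrac{du}{u})^{d_i}\otimes({^L}V_{\frakg,d_i}\otimes E)$, with $h^{cl}_x$ reading off $B\,\tfrac{du}{u}\mapsto(P_i(B))_i\,(\tfrac{du}{u})^{d_i}$ for $B\in\frakg_0\otimes E$.

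Now consider the twisted slice of $1$-forms $A=\big(f'u^{-1}+\sum_i\beta_i(u)w_i\big)\tfrac{du}{u}$, where $\beta_i\in u^{r_i}\bbC[[u^m]]$ with $r_i=(d_i-1)\bmod m$ (allowing additionally $\beta_i\in u^{-1}\bbC[[u^m]]$ in the cases $m\mid d_i$). Because $f'\in\frakg_0(-1)$, the term $f'u^{-1}\,\tfrac{du}{u}$ sits exactly on the boundary of the $\frakp(2)^\perp$-lattice above — this is where principality and the value $n=2$ enter, as it would lie outside $\frakp(1)^\perp$ — and the $\theta_P$-grading forces every $\beta_iw_i$ into the same lattice, so $A\in\frakp(2)^\perp$. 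Conjugating $B:=f'u^{-1}+\sum_i\beta_iw_i$ by $\Ad(\rho^\vee(u))\in G(E)$ (which fixes each $P_i$) turns it into $u^{-2}\big(f'+\sum_j\beta_j(u)u^{d_j+1}w_j\big)$, whence $P_i(B)=u^{-2d_i}\big(\beta_iu^{d_i+1}+Q_i(\{\beta_ju^{d_j+1}\})\big)=u^{1-d_i}\beta_i+u^{-2d_i}Q_i(\cdots)$. A short order count — using $1-d_i+r_i=-m\lfloor d_i/m\rfloor$ and the elementary fact that $\sum_j e_jr_j+\sum_j e_j-1\ge r_i$ whenever $\sum_j e_jd_j=d_i$ — shows each $P_i(B)$ has $u$-order $\ge -m\lfloor d_i/m\rfloor$, and that as the $\beta_i$ vary, solving recursively in increasing $d_i$ via the unitriangular formula for $P_i$, the coordinates $(P_i(B))_i$ sweep out exactly $\bigoplus_i u^{-m\lfloor d_i/m\rfloor}\bbC[[u^m]]$. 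Thus $h^{cl}_x$ carries this slice isomorphically onto $\on{Hitch}(D)_{\frakp(2)}$; its inverse is the desired section, establishing surjectivity and hence the injectivity of $\on{Fun}\on{Hitch}(D)_{\frakp(2)}\to\on{Fun}\frakp(2)^\perp$. The main obstacle is precisely this last bookkeeping step: threading the $u$-orders modulo $m$ through the Kac isomorphism, matching them against the ceiling/floor shift built into $\on{Hitch}(D)_{\frakp(2)}$, and checking that the triangular system can always be solved inside the lattice.
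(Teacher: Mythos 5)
Your proof is correct and uses essentially the same mechanism as the paper: the Kostant section combined with conjugation by a power of the principal cocharacter, with the $\theta_P$-principality providing exactly the degree match $w_i\in\frakg_0(d_i-1)$ and $f'\in\frakg_0(-1)$ needed for the slice to land in $\frakp(2)^\perp$. The only difference is presentational — the paper applies the ($E$-linear, conjugated) Kostant section to a point of $\on{Hitch}(D)_{\frakp(2)}$ and checks the output lies in $\frakp(2)^\perp$, whereas you parametrize the slice inside $\frakp(2)^\perp$ and recover surjectivity onto $\on{Hitch}(D)_{\frakp(2)}$ by the $u$-order bookkeeping and unitriangularity; these are inverse descriptions of the same section.
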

\begin{rmk}There are only a few parahorics $\frakp$ of $\frakg$ such that $\theta_P$ is principal. It will be interesting to describe the precise image of $\frakp(n)^\perp$ under the local Hitchin map for a general parahoric $\frakp$ and general $n$. We are informed that D. Baraglia and M. Kamgarpour have made some progress in this direction. 
\end{rmk}
\begin{proof}
It is convenient to identify $\frakg_0^*$ with $\frakg_0$ using the Killing form. We can choose a principal $\fraks\frakl_2$-triple $\{e',h',f'\}$ of $\frakg_0$ where $e'\in\frakg_0(1), h'\in \frakg_0(0), f'\in \frakg_0(-1)$.
Then the Kostant section 
\[\kappa:\frakc_0\times^{\bbG_m}\omega^\times_E\to \frakg_0\otimes \omega_E\]
associated to $\{e',h',f'\}$ maps the element
$$(t^{-\lfloor\frac{d_1}{m}\rfloor}c_1(t)(\frac{dt}{t})^{d_1}\otimes v_1,\ldots,t^{-\lfloor\frac{d_\ell}{m}\rfloor}c_\ell(t)(\frac{dt}{t})^{d_\ell}\otimes v_\ell)\in \on{Hitch}(D)_{\frakp(2)}$$ to
\begin{equation}\label{section} 
(f'+ \sum_i u^{-d_i-m\lfloor\frac{d_i}{m}\rfloor}c_i(u^m)p_i) du,
\end{equation}
where $p_i'\in \frakg_0^{e'}$ such that $\chi(f'+p_i)=v_i$. Let us regard $h'$ as a cocharacter $h':\bbG_m\to G_0$. Then after conjugating by $\Ad(h'(u))$,  the element \eqref{section} becomes
\[(u^{-1}f'+  \sum_i u^{d_i-m\lfloor\frac{d_i}{m}\rfloor-1}c_i(u^m)p_i)\frac{du}{u}.\]
As $p_i\in \frakg_0(d_i-1)$, the pre-image under $DK$ of the above element belongs to $\frakp(2)^\perp$. This implies the proposition.
\end{proof}

There is the following map
\begin{equation}\label{loc res}
\on{Hitch}(D)_{\frakp(2)}\to \bigoplus_{d_i\mid m} \omega^{d_i}_\calO(d_i+\frac{d_i}{m})/\omega^{d_i}_\calO(d_i+\frac{d_i}{m}-1)\otimes {^L}V_{\frakg,d_i},
\end{equation}
which fits into the commutative diagram
\[\begin{CD}
\frakp(2)^\perp @>>> \frakg^*_0(-1)\otimes\omega_{\calO_E}(2)|_0=\frakp(2)^\perp/\frakp(1)^\perp\\
@VVV@VVV\\
\on{Hitch}(D)_{\frakp(2)}@>>>  \bigoplus_{d_i\mid m} \omega^{d_i}_\calO(d_i+\frac{d_i}{m})/\omega^{d_i}_\calO(d_i+\frac{d_i}{m}-1)\otimes {^L}V_{\frakg,d_i}.
\end{CD}\]
In addition, the right vertical map factors through
\begin{equation}\label{vinberg quot}
\frakg^*_0(-1)\otimes \omega_{\calO_E}(2)|_0\to (\frakg^*_0(-1)/\!\!/L_P)\times^{\bbG_m}\omega_{\calO_E}(2)|_0\to \bigoplus_{d_i\mid m} \omega^{d_i}_\calO(d_i+\frac{d_i}{m})/\omega^{d_i}_\calO(d_i+\frac{d_i}{m}-1)\otimes {^L}V_{\frakg,d_i}.
\end{equation}
Since $\theta_P$ is principal, the map 
$$(f'+\frakg_0^{e'})\cap \frakg_0(-1)=\{f+ \sum_{m\mid d_i} c_ip_i\}\to \frakg_0(-1)/\!\!/L_P$$ is an isomorphism by \cite[Theorem 3.5]{P}. Therefore the last map in \eqref{vinberg quot} is an isomorphism. 

Note that the pairing $\Res_F: \frakp(2)^\perp\times \frakp(1)\to\bbC$ identifies $\frakg^*_0(-1)\otimes\omega_{\calO_E}(2)|_0$ with the dual $V_P^*$ of $\frakg_0(1)=V_P$, and therefore we naturally 
identify $(\frakg^*_0(-1)/\!\!/L_P)\times^{\bbG_m}\omega_{\calO_E}(2)|_0$ with $V^*_P/\!\!/L_P$.
We therefore obtain

\begin{prop}\label{new residue}
Assume that $\theta_P$ is principal. Then
there is the following commutative diagram
\[\begin{CD}
\frakp(2)^\perp @>>> V_P^*\\
@VVV@VVV\\
\on{Hitch}(D)_{\frakp(2)}@>>> V_P^*/\!\!/L_P,
\end{CD}\]
with all arrows surjective.
\end{prop}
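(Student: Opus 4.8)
The proposition packages together several constructions already established in the proof of Proposition~\ref{surj} and the discussion following it. The strategy is to show that the square in the statement is exactly the reduction modulo $\frakp(1)^\perp$ of (a piece of) the commutative square in Proposition~\ref{size of image}, and then to invoke Proposition~\ref{surj} plus the two isomorphisms in \eqref{vinberg quot} to upgrade the surjectivity statements.

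\textbf{Step 1: identify the top and bottom arrows.} First I would recall that the map $\frakp(2)^\perp\to V_P^*$ is by definition the composition of the quotient $\frakp(2)^\perp\twoheadrightarrow \frakp(2)^\perp/\frakp(1)^\perp$ with the residue identification $\frakp(2)^\perp/\frakp(1)^\perp\simeq \frakg_0^*(-1)\otimes\omega_{\calO_E}(2)|_0\simeq V_P^*$ explained just before the statement (the pairing $\Res_F:\frakp(2)^\perp\times\frakp(1)\to\bbC$ descends to a perfect pairing on $\frakp(2)^\perp/\frakp(1)^\perp$ against $\frakp(1)/\frakp(2)=V_P$). Likewise the bottom map $\on{Hitch}(D)_{\frakp(2)}\to V_P^*/\!\!/L_P$ is the composite of \eqref{loc res} with the isomorphism supplied by \cite[Theorem 3.5]{P} (valid because $\theta_P$ is principal), which identifies $\bigoplus_{d_i\mid m}\omega^{d_i}_\calO(d_i+\frac{d_i}{m})/\omega^{d_i}_\calO(d_i+\frac{d_i}{m}-1)\otimes{^L}V_{\frakg,d_i}$ with $V_P^*/\!\!/L_P$.

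\textbf{Step 2: commutativity.} Commutativity of the square is then precisely the commutativity of the diagram displayed right after \eqref{loc res}, which in turn follows from the compatibility of $h^{cl}_x$ (equivalently the Chevalley map $\chi$) with the associated-graded/reduction filtrations: passing from $\frakp(2)^\perp$ to $\frakp(2)^\perp/\frakp(1)^\perp$ and from $\on{Hitch}(D)_{\frakp(2)}$ to its degree-$(d_i+d_i/m)$ graded piece is a functorial operation, and $h^{cl}_x$ carries $\frakp(1)^\perp$ into $\on{Hitch}(D)_{\frakp(1)}$ by Proposition~\ref{size of image} with $n=1$. The fact that the right vertical map factors through $\frakg_0^*(-1)/\!\!/L_P$ (hence through $V_P^*/\!\!/L_P$) is exactly \eqref{vinberg quot}; this is where Vinberg's $\theta$-group invariant theory enters.

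\textbf{Step 3: surjectivity.} The left vertical arrow $\frakp(2)^\perp\to\on{Hitch}(D)_{\frakp(2)}$ is surjective by Proposition~\ref{surj}. The top arrow $\frakp(2)^\perp\to V_P^*$ is surjective because it is a linear projection onto a subquotient. Surjectivity of the bottom arrow $\on{Hitch}(D)_{\frakp(2)}\to V_P^*/\!\!/L_P$ then follows formally: it equals the composite $\frakp(2)^\perp\to\on{Hitch}(D)_{\frakp(2)}\to V_P^*/\!\!/L_P$ up to the (surjective, indeed already established) left arrow, and by commutativity this composite also equals $\frakp(2)^\perp\to V_P^*\to V_P^*/\!\!/L_P$; the latter is surjective since $\frakp(2)^\perp\to V_P^*$ is surjective and the GIT quotient map $V_P^*\to V_P^*/\!\!/L_P$ is surjective. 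Hence all four arrows are surjective. I do not anticipate a genuine obstacle here; the only slightly delicate point is Step~2 — making sure the two identifications of the vertical targets (via the residue pairing on one side and via Panyushev's/Vinberg's theorem on the other) are compatibly normalized so that the square literally commutes rather than commuting up to a harmless scalar, which one checks by tracking the factor $m$ from \eqref{inv res} through the grading shifts.
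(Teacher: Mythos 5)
Your proposal is correct and follows the same route as the paper: the proposition is essentially the package of the displayed commutative square after \eqref{loc res}, the factorization \eqref{vinberg quot}, and Panyushev's theorem, with the surjectivity of the bottom arrow obtained by the diagram chase you describe (the left arrow surjective by Proposition~\ref{surj}, the top arrow a linear projection, the right arrow a GIT quotient). One small imprecision in your Step~2: invoking Proposition~\ref{size of image} with $n=1$ (i.e.\ $h^{cl}_x(\frakp(1)^\perp)\subset\on{Hitch}(D)_{\frakp(1)})$ is not by itself enough to give commutativity, since $h^{cl}_x$ is not additive and the kernel of \eqref{loc res} strictly contains $\on{Hitch}(D)_{\frakp(1)}$; what one actually uses is that each Chevalley invariant $p_{d_i}$ is homogeneous of degree $d_i$, so that for $\xi,\xi'\in\frakp(2)^\perp$ with $\xi-\xi'\in\frakp(1)^\perp$ one has $DK(\xi),DK(\xi')\in u^{-1}\frakg_0^*\otimes\calO_E\tfrac{du}{u}$ while $DK(\xi-\xi')\in\frakg_0^*\otimes\calO_E\tfrac{du}{u}$, whence $p_{d_i}(\xi)-p_{d_i}(\xi')\in u^{-(d_i-1)}\calO_E(\tfrac{du}{u})^{d_i}$ has vanishing $u^{-d_i}$-coefficient, which is exactly what \eqref{loc res} extracts.
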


\begin{rmk}\label{first residue}
We will not make use of this remark. One should compare the above proposition with the following commutative diagram, which holds for any $\frakp$, 
\[\begin{CD}
\frakp(1)^\perp @>>> ( \Lie L_P)^*\simeq \frakp(1)^\perp/\frakp(0)^\perp\\
@VVV@VVV\\
\on{Hitch}(D)_{\on{RS}}@>>> (\frakc^*_{\calO}\times^{\bbG_m}\omega_{\calO}(1)^{\times})|_0.
\end{CD}\]
Note that the bottom arrow in the diagram is the classical limit of the residue map studied in \cite{BD,FGa}. 
\end{rmk}

\section{Endomorphisms of some vacuum modules}\label{endo alg}
We continue to fix a point $x\in X$, and for simplicity write $F_x$ by $F$, etc. 

\subsection{Endomorphisms of some vacuum modules}Let us quantize the picture discussed in the previous section.
As in \S~\ref{vac mod}, we restrict to the case when $\frakg$ is a split Lie algebra. Without loss of generality we can assume that $\frakg=\frakg_0\otimes_kF$.
Also we denote by 
 $\hat{\frakg}_c$ the \emph{critical} central extension of $\frakg$ as in \S~\ref{vac mod}, and let $\hat{U}_c(\frakg)$ be the corresponding completed universal enveloping algebra.

Let $P\subset G(F)$ be a parahoric subgroup and $P=P(0)\rhd P(1)\rhd P(2)\rhd\cdots$ be the corresponding Moy-Prasad filtration. As $P(n)$ for $n>0$ is a pro-unipotent group, we have a natural splitting $\widehat{\frakp(n)}=\frakp(n)\oplus \bbC\mathbf{1}$, where $\widehat{\frakp(n)}$ denotes the preimage of $\frakp(n)$ in $\hat{\frakg}_c$.

Now for $n>0$, we define
\begin{equation}\label{vacp}
\Vac_{\frakp(n)}= \on{Ind}_{\frakp(n)\oplus \bbC\mathbf{1}}^{\hat{\frakg}_c}(\bbC),
\end{equation}
where $\frakp(n)$ acts trivially on $\bbC$ and $\mathbf{1}$ acts as the identity.

Let $\frakZ$ be the center of $\hat{U}_c(\frakg)$ endowed with the natural filtration. By \cite[Theorem 3.7.8]{BD}, the natural map
\begin{equation}\label{classical center}
\on{gr}\frakZ\to \on{Fun}\on{Hitch}(D^\times)
\end{equation}
is an isomorphism.

We write
\[\frakZ\to \frakZ_{\frakp(n)}\hookrightarrow \End \Vac_{\frakp(n)}\simeq \Vac_{\frakp(n)}^{P(n)}\subset \Vac_{\frakp(n)}.\]
All the maps are strictly compatible with the filtrations so that taking the associated graded gives
\[\on{Fun}\on{Hitch}(D^\times)\twoheadrightarrow \on{Fun}\on{Hitch}(D)_{\frakp(i)}\subset \on{Fun}(\frakp(i)^\perp).\]

\quash{
\medskip

\noindent\bf Conjecture. \rm The natural map 
\[\frakZ\to \Vac_{\frakp(i)}^{P(0)}\]
is surjective.

\medskip

\begin{prop}
The conjecture holds if $G$ is split over $F$.
\end{prop}
\begin{proof}
The usual filtration on $\hat{U}_{\crit}(\frakg)$ induces the filtrations on $\frakZ$ and on $\Vac_{\frakp(i)}^{P}$, and it is enough to show that the composition
\[\on{gr}\frakZ\to \on{gr}(\Vac_{\frakp(i)}^{P})\hookrightarrow (\on{gr}\Vac_{\frakp(i)})^{P}\]
is surjective.
Recall by \cite[Theorem 3.7.8]{BD}, the natural injective map
$\on{gr}\frakZ \to \on{Fun}\on{Hitch}(D^\times)$ is an isomorphism. So it is enough to show that
\begin{prop}
The natural map $\on{Fun}\on{Hitch}(D^\times)\to \on{Sym}(L\frakg/\frakp(i))^{P}$ is surjective.
\end{prop}
\begin{proof}Using the residue pairing and by choosing a uniformizer $t\in F$, we identify $ \on{Sym}(L\frakg/\frakp(i))$ as the algebra of functions on $\frakp(-i)$. Then we need to show that ever $P$ invariant function on $\frakp(-i)$ can be extended to a $G(F)$-invariant function on $\frakg((t))$.

To prove the proposition, we use a construction by V. Kac to relates finite order automorphisms of a complex simple Lie algebra $\frakh$ to  parahoric subalgebras in the (twisted) loop algebra $L\frakg$ of $\frakh$. We following the exposition of \cite{GLRY}

So let $\frakh$ be a simple Lie algebra over $\bbC$ and let $\theta$ be a finite order automorphism of $\frakh$. Assume that the order of $\theta$ is $m$ and let $\zeta$ be an $m$th root of unit. We write
\[\frakh=\bigoplus_{i\in \bbZ/m\bbZ} \frakh(i),\]
where $\frakh(i)$ is the eigenspace of $\theta$ with eigenvalue $\zeta^i$. More canonically, let $\theta: \mu_m\to \Aut(\frakh)$ be an injective homomorphism. Then we can decompose
\[\frakh=\bigoplus_{i\in \bbZ/m}\frakh(i),\]
where $\bbZ/m$ is regard as the group of characters of $\mu_m$.

We fix a Cartan sub algebra $\frakt\subset\frakh$. Let $R\subset \frakt^*$ be the set of roots. We fix $\Delta\subset R$ a set of simple roots. 

By the classification of finite order automorphisms of $\frakg$ by V. Kac, we have
\[\Aut(R,\Delta)\to \Aut(\frakg)_{\on{tor}}\]

we can write
\[\theta= \on{Int}(g)\sigma,\]
where $\on{Int}(g)\in G_\ad$ is an inner automorphism, $\sigma$ is an outer automorphism of order $e=1,2$ or $3$. In addition, one can choose $g$ as
\[g= \sum_{i=1}^{\ell} s_i\omega_i(\zeta),\]
where $\omega_i:\bbG_m\to T_\ad$ are the fundamental coweights, $s_i\in\bbZ_{\geq 0} ,i=0,\ldots,\ell$ such that
\[\sum a_is_i=m,\]
where $a_i$ are the Kac-labelling of the affine Dynkin diagram. We write $x=\frac{1}{m}\sum s_i\omega_i \in \xcoch(T)_{\bbQ}$.

\begin{thm}
Let $L\frakh=\frakh\otimes\bbC((u))$, endowed with an action of $\theta$, where $\theta$ acts on $\frakg$ as before, and on $u$ as $\theta(u)=\zeta^{-1}(u)$. Then $(L\frakg)^\theta$ is a (twisted) loop algebra and $(L^+\frakg)^\theta$ is a parahoric subalgebra corresponding to  $x$. In addition, the natural filtration by the order of $u$ induces the Moy-Prasad
\end{thm}
The first statement is due to V. Kac. The last two statement

Recall the following map constructed in \cite[\S~2.4.2]{BD}.
\[T_{cl}: \frakh((u)) \to \frakh((s))[[u]],\quad T_{cl}\varphi=\sum_k \frac{(D^k\varphi)(s)}{k!}u^k.\]
More precisely, $T_{cl}$ as a projective system of morphisms of ind-schemes over $\bbC$,
\[T_{cl,N}: \frakh((u))\to \frakh((s))[u]/u^{N+1}, \quad T_{cl,N}\varphi=\sum_{k=0}^N \frac{(D^k\varphi)(s)}{k!}u^k\]
A direct calculation shows that $T_{cl,N}$ is $\theta$-equivariant. 

Let $f: \frakh[[u]]\to \bbC$ be a regular function, coming from $\frakh[[u]]\to \frakh[u]/u^N\to \bbC$. Then by base change

Therefore, we obtain

\end{proof}

\end{proof}

\begin{cor}The natural map
$$\sigma: \on{gr}(\Vac_{\frakp(i)}^P)\to (\on{gr}(\Vac_{\frakp(i)}))^P$$ is an isomorphism.
\end{cor}

Let $\frakZ_{\frakp(i)}$ be the support of $\Vac_{\frakp(i)}$. Then 
}

As
\[\Vac_{\frakp(2)}=\on{Ind}^{\hat{\frakg}_{c}}_{\frakp(2)+\bbC\mathbf{1}}\bbC=\on{Ind}^{\hat{\frakg}_{c}}_{\frakp(1)+\bbC\mathbf{1}}U(V_P),\]
we obtain an injective algebra homomorphism
\[U(V_P)\simeq \End_{\frakp(1)}U(V_P)\subset \End(\Vac_{\frakp(2)}).\]

\begin{prop}\label{endo diag}
Assume that $\theta_P$ is principal. Then there is the following natural commutative diagram
\[\begin{CD}
U(V_P)^{L_P}@>>> \frakZ_{\frakp(2)}\\
@VVV@VVV\\
U(V_P)@>>> \End(\Vac_{\frakp(2)})
\end{CD}\]
In fact, $\frakZ_{\frakp(2)}\cap U(V_P)=U(V_P)^{L_P}$.
\end{prop}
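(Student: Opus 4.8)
The plan is to establish the commutativity of the diagram and the equality $\frakZ_{\frakp(2)}\cap U(V_P)=U(V_P)^{L_P}$ by passing to associated graded algebras and invoking the classical picture from Section~\ref{loc Hitchin}. First I would set up the filtrations: the PBW filtration on $\hat U_c(\frakg)$ induces filtrations on $\frakZ$, on $\Vac_{\frakp(2)}$, on the subalgebra $U(V_P)\subset\End(\Vac_{\frakp(2)})$, and on $\frakZ_{\frakp(2)}$, and all the maps in sight are (strictly) compatible with these. Taking $\on{gr}$, the map $\on{gr}\frakZ\to\on{Fun}\on{Hitch}(D^\times)$ is the Feigin-Frenkel-type isomorphism \eqref{classical center} from \cite[Theorem 3.7.8]{BD}; the map $U(V_P)\to\End(\Vac_{\frakp(2)})$ becomes, after identifying $\on{gr}U(V_P)=\on{Sym}(V_P)$, the inclusion $\on{Fun}(V_P^*)\hookrightarrow\on{Fun}(\frakp(2)^\perp)$ dual to the projection $\frakp(2)^\perp\to V_P^*$ of Proposition~\ref{new residue}; and $\on{gr}\frakZ_{\frakp(2)}=\on{Fun}\on{Hitch}(D)_{\frakp(2)}$ by the strict compatibility statement already recorded before the quashed block, using that $\frakZ\to\frakZ_{\frakp(2)}$ is surjective. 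Thus the associated graded diagram is precisely the commutative square of Proposition~\ref{new residue} (with $\on{Fun}$ applied, arrows reversed), whose commutativity and whose surjectivity of all arrows we may assume.

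With the graded picture in hand, commutativity of the original (non-graded) diagram follows because a filtered map is determined on a dense-enough piece by its symbol together with the classical assertion: concretely, the two composites $U(V_P)^{L_P}\to\End(\Vac_{\frakp(2)})$ both land in the commutative subalgebra and agree after $\on{gr}$, and since the filtrations are separated and exhaustive this forces equality. More carefully, I would argue that the map $\frakZ\to\End(\Vac_{\frakp(2)})$ factors through $U(V_P)$ on the invariant part: an element of $\frakZ$ is $G(F)$-invariant, hence in particular $P(0)$-invariant, hence $L_P$-invariant when pushed into $U(V_P)\simeq\Vac_{\frakp(1)}^{P(1)}$-type description; this is where one uses that $\Vac_{\frakp(2)}=\on{Ind}^{\hat\frakg_c}_{\frakp(1)+\bbC\mathbf1}U(V_P)$ and that the $L_P=P(0)/P(1)$-action on $\End(\Vac_{\frakp(2)})$ restricted to the subalgebra $U(V_P)$ is the adjoint action on $\on{Sym}(V_P)$. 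So the image of $\frakZ_{\frakp(2)}$ inside $U(V_P)$ lies in $U(V_P)^{L_P}$, giving one inclusion in the final identity.

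For the reverse inclusion $U(V_P)^{L_P}\subseteq\frakZ_{\frakp(2)}$ — equivalently, that every $L_P$-invariant in $U(V_P)$ actually comes from the center $\frakZ$ — I would again reduce to the graded statement: $\on{gr}(U(V_P)^{L_P})=\on{Sym}(V_P)^{L_P}=\on{Fun}(V_P^*/\!\!/L_P)$, and by Proposition~\ref{new residue} (bottom arrow) this equals $\on{gr}$ of the image of $\on{Fun}\on{Hitch}(D)_{\frakp(2)}=\on{gr}\frakZ_{\frakp(2)}$. A standard filtered-algebra induction on degree then lifts this: given $z\in U(V_P)^{L_P}$ of filtration degree $n$, its symbol lies in the image of $\on{gr}_n\frakZ_{\frakp(2)}$, so we may subtract off an element of $\frakZ_{\frakp(2)}$ to strictly lower the degree and conclude by induction, using separatedness of the filtration. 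I expect the main obstacle to be the bookkeeping that makes the identification $\on{gr}$ of the \emph{non-graded} diagram literally match the square of Proposition~\ref{new residue} — in particular verifying strict compatibility of all the filtrations (so that no lower-order terms obstruct the degree induction) and pinning down the residue-pairing identification of $\on{gr}U(V_P)\simeq\on{Fun}(V_P^*)$ with the projection $\frakp(2)^\perp\to V_P^*$; the representation-theoretic input itself (that central elements are $L_P$-invariant, and the shape of the induced module) is comparatively soft.
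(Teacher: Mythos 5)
Your overall strategy — pass to associated graded via the PBW filtration and feed Proposition~\ref{new residue} into a lifting argument — is the same one the paper uses, and your identification of the classical picture (the filtrations are strictly compatible, $\on{gr}\Vac_{\frakp(2)}\simeq\on{Fun}\frakp(2)^\perp$, $\on{gr}U(V_P)\simeq\on{Fun}V_P^*$, $\on{gr}\frakZ_{\frakp(2)}\simeq\on{Fun}\on{Hitch}(D)_{\frakp(2)}$) matches the paper's. Your argument for the ``easy'' inclusion $\frakZ_{\frakp(2)}\cap U(V_P)\subset U(V_P)^{L_P}$ by $P(0)$-invariance is also essentially what the paper does.

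However, your degree induction for the reverse inclusion has a real gap, not just a bookkeeping issue. You take $z\in U(V_P)^{L_P}$ of filtration degree $n$, note that its symbol lies in $\on{gr}_n\frakZ_{\frakp(2)}$, choose a lift $w\in\frakZ_{\frakp(2)}$ of degree $\leq n$ with the same symbol, and claim you may subtract and induct. But $z-w$ lives only in $\End(\Vac_{\frakp(2)})$: $w$ need not lie in $U(V_P)$, so $z-w$ need not lie in $U(V_P)^{L_P}$, and the inductive hypothesis does not apply to it. To repair the induction you would need to choose $w$ inside $\frakZ_{\frakp(2)}\cap U(V_P)$, i.e.\ you would need to know that $(\on{Fun}V_P^*)^{L_P}$ is already contained in $\on{gr}\bigl(\frakZ_{\frakp(2)}\cap U(V_P)\bigr)$ — which is precisely the nontrivial statement you are trying to establish.

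The paper avoids this circularity by working directly with $W:=\frakZ_{\frakp(2)}\cap U(V_P)$ as a single filtered object. From the easy direction $W\subset U(V_P)^{L_P}$ one gets $\on{gr}W\subset\on{gr}(U(V_P)^{L_P})=(\on{Fun}V_P^*)^{L_P}$; from the commutative diagram of Proposition~\ref{new residue} one gets $(\on{Fun}V_P^*)^{L_P}\subset\on{gr}\frakZ_{\frakp(2)}\cap\on{gr}U(V_P)$, and the paper identifies this intersection with $\on{gr}W$. The sandwich $(\on{Fun}V_P^*)^{L_P}\subset\on{gr}W\subset(\on{Fun}V_P^*)^{L_P}$ then forces $\on{gr}W=\on{gr}(U(V_P)^{L_P})$, and together with the containment $W\subset U(V_P)^{L_P}$ this yields $W=U(V_P)^{L_P}$. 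So the ``lift one degree at a time and subtract'' move you propose is replaced by a global $\on{gr}$-comparison of the two filtered subalgebras; that is the step you should adopt, and it is where the actual content beyond Proposition~\ref{new residue} lives. (Also, a small point of framing: there is only one map $U(V_P)^{L_P}\to\End(\Vac_{\frakp(2)})$, the inclusion — the commutativity of the square is literally equivalent to the inclusion $U(V_P)^{L_P}\subset\frakZ_{\frakp(2)}$, so there are not ``two composites'' to compare.)
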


\begin{proof}Let $W$ be the intersection of $\frakZ_{\frakp(2)}$ and $U(V_P)$ inside $\End(\Vac_{\frakp(2)})$.  Under the following natural maps, 
\[\End(\Vac_{\frakp(2)})\simeq \Vac_{\frakp(2)}^{\frakp(2)}\subset \Vac_{\frakp(2)},\]
the subalgebra $\frakZ_{\frakp(2)}\subset \End(\Vac_{\frakp(2)})$ is in fact contained in $\Vac_{\frakp(2)}^P\subset \Vac_{\frakp(2)}$, and $U(V_P)$ is just the subspace in $\Vac_{\frakp(2)}$ generated from the vacuum vector by $\frakp(1)$.
Therefore $W$ is contained in $U(V_P)^{L_P}=U(V_P)\cap \Vac_{\frakp(2)}^P$.
The filtrations on all the above spaces are induced from the natural filtration on $\hat{U}_{c}(\frakg)$. Therefore $\on{gr}W$ is the intersection of $\on{gr}\frakZ_{\frakp(2)}=\on{Fun}\on{Hitch}(D)_{\frakp(2)}$ and $\on{gr}U(V_P)=\on{Fun}(V_P^*)$ inside $\on{gr}\Vac_{\frakp(2)}=\on{Fun} \frakp(2)^\perp$. Therefore by Proposition \ref{new residue}, we have inclusions
\[(\on{Fun} V_P^*)^{L_P}\subset\on{gr}W\subset \on{gr}(U(V_P)^{L_P})\subset (\on{Fun} V_P^*)^{L_P}.\]
Clearly, the composition of these inclusions is just the identity map.
Therefore $\on{gr}W=\on{gr}(U(V_P)^{L_P})$. Together with $W\subset U(V_P)^{L_P}$, it implies that $W=U(V_P)^{L_P}$.
\end{proof}

\begin{rmk}
(i) In the case when $P$ is an Iwahori subgroup, T.-H. Chen (private communication) proved this proposition by a different (and more direct) argument. He then in turn deduced Proposition \ref{new residue} from it  in this case.

(ii) It is easy to see that the subalgebras $U(V_P)$ and $\frakZ_{\frakp(2)}$ in $\End(\Vac_{\frakp(2)})$ commute with each other, and therefore induces a map
\[U(V_P)\otimes_{U(V_P)^{L_P}} \frakZ_{\frakp(2)}\to \End(\Vac_{\frakp(2)}).\]
This is probably an isomorphism.
\end{rmk}

\subsection{Endomorphism algebras and opers}
Recall that we denote by ${^L}\frakg$ the Langlands dual Lie algebra of $\frakg$, equipped with a Borel subalgebra ${^L}\frakb$. As we assume that $\frakg$ is split, ${^L}V_\frakg=V_{{^L}\frakg}$ and we will following \cite{BD} to use the later notation in this subsection.
Recall that there is a natural $V_{{^L}\frakg}\otimes\omega_F$-torsor structure on $\on{Op}_{{^L}\frakg}(D^\times)$ (cf. \cite[3.1.9]{BD}), which
 induces a natural filtration on $A_{{^L}\frakg}(D^\times)$  (cf. \cite[\S~3.1.13]{BD}) with a canonical isomorphism
$$\on{gr}A_{{^L}\frakg}(D^\times)\simeq \on{Fun} V_{{^L}\frakg}\otimes\omega_F\simeq \on{Fun}\frakc_{{^L}\frakg}\times^{\bbG_m}\omega_F^\times\simeq \on{Fun}\frakc^*\times^{\bbG_m}\omega_F^\times=\on{Fun}\on{Hitch}(D^\times).$$
One of the properties of the Feigin-Frenkel isomorphism \eqref{FeFr} is that it is
compatible with the filtration such that the associated graded gives \eqref{classical center}. Then we may regard $\Spec \frakZ_{\frakp(n)}=: \on{Op}_{{^L}\frakg}(D)_{\frakp(n)}$ as a closed subscheme of $\on{Op}_{{^L}\frakg}(D^\times)$, which we now describe for $n=2$.

Let $\Op_{^{L}\frakg}(D)$ be the scheme (of infinite type) of
${^L}\frakg$-opers on $D$. As before, it has a natural $V_{{^L}\frakg}\otimes\omega_{\calO}$-torsor structure. We have the natural inclusions of pro-unipotent groups
\[V_{{^L}\frakg}\otimes\omega_{\calO}\subset\bigoplus \omega_{\calO}^{d_i}(d_i+\left\lfloor\frac{d_i}{m}\right\rfloor)\otimes V_{{^L}\frakg,d_i}.\]
Then 
\begin{lem} The scheme
$\on{Op}_{{^L}\frakg}(D)_{\frakp(2)}$ is the $(\bigoplus_i \omega_{\calO}^{d_i}(d_i+\left\lfloor\frac{d_i}{m}\right\rfloor)\otimes V_{{^L}\frakg,d_i})$-torsor induced from the $V_{{^L}\frakg}\otimes\omega_{\calO}$-torsor $\Op_{^{L}\frakg}(D)$.
\end{lem}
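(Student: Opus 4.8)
The plan is to identify $\on{Op}_{{^L}\frakg}(D)_{\frakp(2)}=\Spec\frakZ_{\frakp(2)}$, regarded as a closed subscheme of $\on{Op}_{{^L}\frakg}(D^\times)$, with the sub-$M$-torsor $T\subset\on{Op}_{{^L}\frakg}(D^\times)$ generated by $\on{Op}_{{^L}\frakg}(D)$, where $M:=\bigoplus_i\omega_{\calO}^{d_i}(d_i+\lfloor\frac{d_i}{m}\rfloor)\otimes V_{{^L}\frakg,d_i}$; since $V_{{^L}\frakg}\otimes\omega_{\calO}\subseteq M$, the scheme $\on{Op}_{{^L}\frakg}(D)$ lies in a single $M$-orbit, so $T$ is well-defined as that orbit. (As in Propositions~\ref{surj}--\ref{endo diag}, one assumes $\theta_P$ principal.) First I would pin down the classical limits of both sides. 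Because $\frakZ\to\frakZ_{\frakp(2)}\hookrightarrow\End\Vac_{\frakp(2)}$ is strictly filtered and $\on{gr}\frakZ\simeq\on{Fun}\on{Hitch}(D^\times)$ by \eqref{classical center}, the associated graded of $\frakZ_{\frakp(2)}$ is the image of $\on{Fun}\on{Hitch}(D^\times)$ inside $\on{gr}\Vac_{\frakp(2)}=\on{Fun}\frakp(2)^\perp$ along the classical Hitchin map; by Proposition~\ref{size of image} and the surjectivity in Proposition~\ref{surj} this image is $\on{Fun}\on{Hitch}(D)_{\frakp(2)}$, and (using that $\frakg$ is split, so ${^L}V_\frakg=V_{{^L}\frakg}$, and $-\lceil\frac{-d_i}{m}\rceil=\lfloor\frac{d_i}{m}\rfloor$) $\on{Hitch}(D)_{\frakp(2)}$ is exactly $M\subset\on{Hitch}(D^\times)$. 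Hence $\on{gr}\on{Fun}\on{Op}_{{^L}\frakg}(D)_{\frakp(2)}=\on{Fun} M$. On the other side, the $V_{{^L}\frakg}\otimes\omega_F$-torsor structure on $\on{Op}_{{^L}\frakg}(D^\times)$ and the filtration on $A_{{^L}\frakg}(D^\times)$ degenerate (\cite[\S3.1.9, \S3.1.13]{BD}) to the translation action on $\on{Hitch}(D^\times)=V_{{^L}\frakg}\otimes\omega_F$; since the classical limit of $\on{Op}_{{^L}\frakg}(D)$ is $V_{{^L}\frakg}\otimes\omega_{\calO}\subseteq M$, the classical limit of $T$ is its $M$-orbit, i.e.\ $M$, so $\on{gr}\on{Fun} T=\on{Fun} M$ as well.

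Second I would prove the containment $\on{Op}_{{^L}\frakg}(D)\subseteq\on{Op}_{{^L}\frakg}(D)_{\frakp(2)}$. It suffices to exhibit a hyperspecial parahoric subalgebra $\frakp_0\supseteq\frakp(2)$: the inclusion $\frakp(2)\subseteq\frakp_0$, with the central splittings automatically matching by uniqueness of the pro-unipotent splitting of $\widehat{\frakp(2)}$, gives a surjection $\Vac_{\frakp(2)}=\Ind_{\widehat{\frakp(2)}}^{\hat{\frakg}_c}\bbC\twoheadrightarrow\Ind_{\widehat{\frakp_0}}^{\hat{\frakg}_c}\bbC$, and the target is a critical-level vacuum module with endomorphism algebra $A_{{^L}\frakg}(D)$ whose support in $\on{Op}_{{^L}\frakg}(D^\times)$ is $\on{Op}_{{^L}\frakg}(D)$; as $\frakZ$ acts on it through the quotient $\frakZ\twoheadrightarrow\frakZ_{\frakp(2)}$, we get $\frakZ_{\frakp(2)}\twoheadrightarrow A_{{^L}\frakg}(D)$, hence the desired closed embedding. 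To produce $\frakp_0$: choose an Iwahori $I\subset\frakp$; then $\frakp(2)\subseteq\frakp(1)\subseteq I(1)\subseteq I$, and since $\frakg$ is split and simply-connected the alcove of $I$ has a hyperspecial vertex $v_0$, so taking $\frakp_0$ to be the parahoric at $v_0$ gives $\frakp_0\supseteq I\supseteq\frakp(2)$.

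Third I would show $\on{Op}_{{^L}\frakg}(D)_{\frakp(2)}$ is stable under the $M$-action on $\on{Op}_{{^L}\frakg}(D^\times)$. The $M$-coaction on $A_{{^L}\frakg}(D^\times)$ is filtered with associated graded the translation coaction on $\on{Fun}(V_{{^L}\frakg}\otimes\omega_F)$, under which the ideal $I(M)$ — which by the first step is the associated graded of the defining ideal of $\on{Op}_{{^L}\frakg}(D)_{\frakp(2)}$ — is manifestly stable; a filtered-Nakayama argument (strictness of the coaction, all relevant filtrations complete and separated) upgrades this to stability of the defining ideal itself. Granting this, the second step gives $\on{Op}_{{^L}\frakg}(D)_{\frakp(2)}\supseteq M\cdot\on{Op}_{{^L}\frakg}(D)=T$, and the resulting surjection $\on{Fun}\on{Op}_{{^L}\frakg}(D)_{\frakp(2)}\twoheadrightarrow\on{Fun} T$ is an isomorphism since it is an isomorphism on associated graded (both equal $\on{Fun} M$) for complete separated filtrations. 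Therefore $\on{Op}_{{^L}\frakg}(D)_{\frakp(2)}=T$, which is the assertion of the lemma.

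I expect the third step — making precise the compatibility of the $V_{{^L}\frakg}\otimes\omega_F$-torsor action with the Feigin--Frenkel filtration on $\frakZ$, and concluding $M$-stability of $\on{Op}_{{^L}\frakg}(D)_{\frakp(2)}$ from $M$-stability of the associated graded ideal — to be the main obstacle. One could instead try to avoid it by bounding the support $\Spec\frakZ_{\frakp(2)}$ directly, showing that every $\on{Op}$-point in it has pole order no worse than that permitted by $M$; but extracting such an estimate cleanly from the construction of $\frakZ_{\frakp(2)}$ looks harder than the deformation argument above.
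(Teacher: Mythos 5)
Your Step~1 is exactly the paper's argument: the paper defines the induced torsor, observes that both it and $\on{Op}_{{^L}\frakg}(D)_{\frakp(2)}$ sit inside $\on{Op}_{{^L}\frakg}(D^\times)$ with the quotient filtration coming from $A_{{^L}\frakg}(D^\times)$, checks that the two associated gradeds agree (both equal $\on{Fun}\on{Hitch}(D)_{\frakp(2)}$, via the strictness of $\frakZ\to\frakZ_{\frakp(2)}\hookrightarrow\End\Vac_{\frakp(2)}$, \eqref{classical center}, and Propositions~\ref{size of image}, \ref{surj}), and then concludes directly. Your Steps~2 and~3 are a detour the paper does not take: you are trying to establish the literal containment $T=M\cdot\on{Op}_{{^L}\frakg}(D)\subseteq\on{Op}_{{^L}\frakg}(D)_{\frakp(2)}$ so that the ``equal associated graded for nested ideals implies equal ideals'' lemma becomes applicable, whereas the paper treats the agreement on $\on{gr}$ as already sufficient once both sides are presented as filtered quotients of $A_{{^L}\frakg}(D^\times)$. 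Step~2 (producing a hyperspecial $\frakp_0\supseteq I\supseteq\frakp(1)\supseteq\frakp(2)$ and surjecting $\Vac_{\frakp(2)}\twoheadrightarrow\Vac_{\frakp_0}$, using uniqueness of the Lie-algebra splitting over pro-unipotent $P(2)$ so the two Harish--Chandra splittings agree) is a correct and pleasant observation that the paper does not need.

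The genuine gap is Step~3, and you rightly flag it. Knowing that $\on{gr}\Delta$ preserves the graded ideal $I(M)$ inside $\on{Fun}\on{Hitch}(D^\times)$ does \emph{not} by itself force $\Delta$ to preserve the defining ideal of $\on{Op}_{{^L}\frakg}(D)_{\frakp(2)}$: the ``filtered Nakayama'' you invoke needs a prior inclusion to run (if $f\in J$ then $\Delta(f)$ differs from something in $J\hat\otimes B$ only modulo lower filtration, and to iterate you must already know the error term is in the right place). The toy example $A=\bbC[x]$, with the degree filtration and ideals $(x)$ versus $(x+1)$, shows that filtered quotients with the same associated graded ideal need not coincide; so some extra input is always needed. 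Without closing Step~3 your chain of containments never reaches $T\subseteq\on{Op}_{{^L}\frakg}(D)_{\frakp(2)}$, and the conclusion does not follow. If you want to salvage your route, it would be cleaner to prove the \emph{reverse} inclusion $\on{Op}_{{^L}\frakg}(D)_{\frakp(2)}\subseteq T$ (a pole-order bound on the support of $\Vac_{\frakp(2)}$, the ``alternative'' you mention at the end), and then let the $\on{gr}$ comparison from Step~1 upgrade this single inclusion to an equality; combined with Step~2 it would even let you bypass $M$-stability entirely. As written, Step~3 is a hole, and so the argument as a whole is incomplete, even though the classical-limit computation in Step~1 is correct and matches the paper.
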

\begin{proof}
Let us temporarily denote by $\on{Op}_{{^L}\frakg}(D)'_{\frakp(2)}$ this induced torsor. Since $\on{Op}_{{^L}\frakg}(D^\times)$ is also induced from $\on{Op}_{{^L}\frakg}(D)$ via $V_{{^L}\frakg}\otimes\omega_{\calO}\subset V_{{^L}\frakg}\otimes\omega_F$, $\on{Op}_{{^L}\frakg}(D)'_{\frakp(2)}$ is a closed subscheme of $\on{Op}_{{^L}\frakg}(D^\times)$ and the filtration on $\on{Fun}\on{Op}_{{^L}\frakg}(D)'_{\frakp(2)}$ is the quotient filtration on $A_{{^L}\frakg}(D^\times)$. Therefore, it is enough to show that the associated graded of $\on{Fun}\on{Op}_{{^L}\frakg}(D)'_{\frakp(2)}$ and of $\on{Fun}\on{Op}_{{^L}\frakg}(D)_{\frakp(2)}$ coincide in $\on{Hitch}(D^\times)$, which is clear.
\end{proof}

More explicitly, we can describe $\on{Op}_{{^L}\frakg}(D)_{\frakp(2)}$ as follows. We fix a uniformizer $t\in F$, and a principal $\fraks\frakl_2$-triple $\{e,h,f\}$ of ${^L}\frakg$ with $e\in{^L}\frakb$, then it is well-known (e.g. \cite[\S~3.5.6]{BD} or \cite[\S~4.2.4]{F}) that $\on{Op}_{{^L}\frakg}(D^\times)$ can be identified with
\begin{equation}\label{oper space}
\{\nabla=d+(f+u)dt \mid u\in {^L}\frakb(F)\}/{^L}U(F)\simeq \{\nabla=d+(f+v)dt\mid v\in V_{{^L}\frakg}\otimes F\},
\end{equation}
where ${^L}U$ is the unipotent radical of the ${^L}B$.
Then $\on{Op}_{{^L}\frakg}(D)_{\frakp(2)}$ is the subspace of operators as above such that 
\begin{equation}\label{irr oper}
v\in \bigoplus_i \omega^{d_i}_{\calO}(d_i+\left\lfloor \frac{d_i}{m}\right\rfloor)\otimes V_{{^L}\frakg,d_i}.
\end{equation}
\begin{rmk}
There is the notion of the slope of a ${^L}\frakg$-local system on the punctured disc $D^\times$ (e.g. \cite[\S~5]{FG} or \cite[\S~2]{CK}). It is not hard to see that $\on{Op}_{{^L}\frakg}(D)_{\frakp(2)}$ is the (reduced) subscheme of $\on{Op}_{{^L}\frakg}(D^\times)$ such that the underlying local system has slope $\leq \frac{1}{m}$. We do not use this fact in the sequel.
\end{rmk}

\section{Proof of a conjecture in \cite{HNY}}\label{Proof} Now we specialize the group scheme $\calG$ over $X=\bbP^1$. Let $G_0$ be a
simple, simply-connected complex Lie group, of rank $\ell$. Let us
fix $B_0\subset G_0$ a Borel subgroup and $B_0^{\opp}$ an opposite Borel
subgroup. The unipotent radical of $B_0$ (resp. $B_0^{\opp}$) is denoted by
$U_0$ (resp. $U_0^{\opp}$). Following \cite{HNY}, we denote by $\calG(0,1)$
the group scheme on $\bbP^1$ obtained from the dilatation of the constant group scheme
$G_0\times\bbP^1$ along $B_0^{\opp}\times\{0\}\subset G_0\times\{0\}$ and along
$U_0\times\{\infty\}\subset G_0\times\{\infty\}$. Explicitly, 
$$\calG(0,1)(\calO_0)=\on{ev}_0^{-1}(B_0^{\opp}),\quad \calG(0,1)(\calO_\infty)=\on{ev}_\infty^{-1}(U_0),$$ where $\on{ev}_0: G_0(\calO_0)\to G_0$ (resp. $\on{ev}_\infty:G_0(\calO_\infty)\to G_0$) is the evaluation map. 
Following \emph{loc.
cit.}, we denote $I(1)=\calG(0,1)(\calO_\infty)$.

Let $\calG(0,2)\to\calG(0,1)$ be the dilatation of $\calG(0,1)$ at $\infty$, such
that it is an isomorphism away from $\infty$, and that at $\infty$ it induces
$$\calG(0,2)(\calO_\infty)=I(2):=[I(1),I(1)]\subset I(1)=\calG(0,1)(\calO_\infty).$$ 
To simplify notations, in the sequel, $\calG(0,1)$ is denoted by $\calG'$ and $\calG(0,2)$ is denoted by $\calG$.

Note that we can apply all the results in \S~\ref{loc Hitchin} to the standard Iwahori subgroup $I=I(0)\subset G(F_\infty)$. Following notations in that section, let
$V=V_I=I(1)/I(2)$, which is isomorphic to
$\prod_{i=0}^{\ell}U_{\al_i}$, where $\al_i$ are simple affine roots,
and $U_{\al_i}$ are the corresponding affine root groups. We identify $V$ with its Lie algebra via the exponential map as explained before. Let $T=L_I= I(0)/I(1)$, which is the Cartan torus acting on $V$ by conjugation.
\quash{
Let us choose for
each $\al_i$ an isomorphism $\Psi_i:U_{\al_i}\simeq\bbG_a$. Then we
obtain a well-defined morphism
\[\Psi: I(1)\to I(1)/I(2)\simeq \prod_{i=0}^{\ell}U_{\al_i}\simeq\prod\bbG_a\stackrel{\on{sum}}{\to}\bbG_a.\]
Let $I_\Psi:=\ker\Psi\subset I(1)$.}

As explained in \emph{loc. cit.}, there is an open substack $*\subset \Bun_{\calG'}$ corresponding to trivializable $\calG'$-torsors. Its pre-image in
$\Bun_{\calG}$ is isomorphic to $V$, denoted by $\mathring{\Bun}_\calG$.

\begin{lem}\label{goodness}
The stack $\Bun_{\calG}$ is good in the sense of \cite[\S~1.1.1]{BD}.
\end{lem}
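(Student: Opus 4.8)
The definition of "good" in \cite[\S~1.1.1]{BD} requires that $\dim T^*\Bun_\calG = 2\dim\Bun_\calG$, or equivalently that the codimension of the locus in $T^*\Bun_\calG$ where the classical Hitchin map (here: the enhanced Hitchin map) fails to be of maximal rank is at least... — more precisely, the standard criterion is that $\dim T^*\Bun_\calG \leq 2\dim\Bun_\calG$, which combined with the reverse inequality (always true for the cotangent stack of a smooth stack, since $T^*_x$ over the smooth locus has the right dimension) gives goodness. So the plan is to bound $\dim T^*\Bun_\calG$ from above by $2\dim\Bun_\calG$.

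First I would use the open substack $\mathring{\Bun}_\calG \cong V$ (the preimage of the point $* \subset \Bun_{\calG'}$) together with the $Q=P_\infty/K_\infty$-translation action. The key structural input is that $\Bun_{\calG'}$ already contains the point $*$ as a dense open substack in the relevant component, and by \cite{HNY} the complement has large codimension; pulling back along $\Bun_\calG \to \Bun_{\calG'}$, the open piece $\mathring{\Bun}_\calG \cong V$ is where the estimate is easy, since over a global quotient of a vector space by a torus one checks $\dim T^* = 2\dim$ directly. The work is to control the boundary $\Bun_\calG \setminus \mathring{\Bun}_\calG$: I would stratify it by Harder--Narasimhan type (for the underlying $G_0$-bundle on $\bbP^1$, i.e. by the dominant coweight giving the Grothendieck decomposition), and on each stratum estimate $\dim T^*$ by $\dim(\text{stratum}) + \dim(\text{conormal fiber})$, showing this never exceeds $2\dim\Bun_\calG$. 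The conormal directions correspond to infinitesimal automorphisms of the bundle-with-level-structure, so the bound reduces to: (automorphisms) $-$ (deformations) behaves correctly, i.e. the failure of $*$ to be the only point is controlled by the parahoric/Moy--Prasad level structure at $0$ and $\infty$, which kills enough automorphisms.

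Concretely I expect the cleanest route is to compare with the constant group scheme: $\Bun_\calG \to \Bun_{G_0\times\bbP^1} = \Bun_{G_0}$ is (on each HN stratum) an iterated affine-bundle-type map coming from the level structures $\on{ev}_0^{-1}(B_0^{\opp})$, $I(2)$, so that $\dim\Bun_\calG = \dim\Bun_{G_0} + (\text{level contributions})$, and similarly for the cotangent fibers; since $\Bun_{G_0}$ over $\bbP^1$ is well understood (the trivial bundle is dense, $H^1(\bbP^1, \ad E)=0$ generically) one transfers goodness through. The potential subtlety — and the step I expect to be the main obstacle — is the non-semistable locus: over $\bbP^1$, $G_0$-bundles are never semistable for nonabelian $G_0$, so \emph{every} point is "unstable" and one cannot appeal to a dense semistable locus; instead one must genuinely run the HN-stratification dimension count and verify that the extra automorphisms on deep strata are exactly compensated by the codimension of the stratum plus the rigidification imposed by the two level structures. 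I would carry this out by writing, for a stratum of bundles of type $\lambda$, the estimate
\[
\dim T^*_\lambda \;=\; \dim \Bun_\calG^{(\lambda)} + \dim H^0(\bbP^1, \text{(appropriately twisted } \ad E \text{ with level conditions at }0,\infty))
\]
and checking $\leq 2\dim\Bun_\calG$ stratum by stratum, the $B_0^{\opp}$-structure at $0$ and the $I(2)$-structure at $\infty$ ensuring $H^0$ of the twisted adjoint bundle stays small even for very unstable $E$. This is a finite check once the twisting is set up, and is the analogue of the computation in \cite{HNY} showing $\mathring{\Bun}_\calG$ is the whole relevant locus up to high codimension.
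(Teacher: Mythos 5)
Your overall strategy---control $\dim T^*\Bun_{\calG}$ by stratifying by automorphism-group dimension and checking that codimension compensates---is the right one, and you correctly spot that the $V$-torsor $\Bun_{\calG}\to\Bun_{\calG'}$ lets you push the question down to $\Bun_{\calG'}$. But the stratification you propose, by the Harder--Narasimhan (Grothendieck) type $\lambda$ of the underlying $G_0$-bundle on $\bbP^1$, is the wrong one for this problem, and this is a genuine gap. Once the level structures at $0$ and $\infty$ are in play, the automorphism-group dimension is not constant along an HN stratum: a fixed Grothendieck type $\lambda$ admits many relative positions of the $B_0^{\opp}$-reduction at $0$ (resp.\ the $I(2)$-structure at $\infty$) with respect to the HN filtration, and the stabilizer jumps accordingly. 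So the inequality ``codim $\ge$ stabilizer dimension'' cannot be checked stratum-by-stratum on the HN decomposition; you would be forced to further refine by those relative positions, and the refinement you would arrive at is exactly a Bruhat--Birkhoff decomposition. Your plan as written would stall at precisely the step you flag as ``the main obstacle.''

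The paper avoids all of this by first reducing goodness of $\Bun_{\calG}$ to goodness of $\Bun_{\calG'}$ (since the former is a principal $V$-bundle over the latter, with $V$ a vector group), and then using the uniformization $\Bun_{\calG'}=\calG'_{out}\backslash\Gr_{\calG',0}$ of \cite[Proposition~1]{HNY}, in which $\Gr_{\calG',0}$ is the affine flag variety of $G_0(F_0)$. Faltings' theorem \cite[Theorem~7]{Fa} gives a Birkhoff stratification of this quotient by the affine Weyl group, with $C_w$ locally closed of dimension $-\ell(w)$; combined with $\dim\Bun_{\calG'}=0$ \cite[Corollary~3]{HNY}, this says each stratum has codimension $\ell(w)$ and stabilizer of dimension $\ell(w)$, which is verbatim the goodness criterion of \cite[\S~1.1.1]{BD}. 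In short: the correct stratification is indexed by the affine Weyl group, not by dominant coweights, and it is delivered ready-made by Faltings, so no stratum-by-stratum $H^0$ estimate is needed.
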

\begin{proof}Since $\Bun_{\calG}$ is a principal bundle
over $\Bun_{\calG'}$ under the group $V$,
it is enough to show that $\Bun_{\calG'}$ is good. But this follows from the fact that $\Bun_{\calG'}$ has a stratification by
elements in the affine Weyl group of $G$ such that the stratum
corresponding to $w$ has codimension $\ell(w)$ and the stabilizer
group has dimension $\ell(w)$. 

Indeed, by \cite[Proposition 1]{HNY},
\[\Bun_{\calG'}=\calG'_{out}\backslash \Gr_{\calG',0},\]
where $\calG'_{out}=\calG'(\bbP^1\setminus\{0\})$ is the ind-group representing sections of $\calG'$ over $\bbP^1\setminus\{0\}$.
By \cite[Corollary 3]{HNY}, $\dim\Bun_{\calG'}=0$.
Note that $\Gr_{\calG',0}$ is just the usual affine flag variety for $G_0(F_0)$. By \cite[Theorem 7]{Fa}, for every $w$ in the affine Weyl group $W_{\on{aff}}$, the double quotient
\[C_w:=\calG'_{out}\backslash \calG'_{out}\ \cdot\ w\ \cdot\ \calG'(\calO_0)/\calG'(\calO_0)\]
is represented by an Artin stack, locally closed in $\Bun_{\calG'}$. In addition, $\dim C_w=-\ell(w)$. This proves the claimed fact.
\end{proof}

\begin{rmk}\label{sqr5}
Again, according to \cite[Theorem 7]{Fa}, the closure $\overline{C}_s$ of $C_s$ for  a simple reflection $s$ is a divisor on $\Bun_{\calG'}$. Together with the calculation in \cite[\S~4.1]{Z}, we see twice of the sum of all these divisors gives the line bundle $\omega_{\Bun_{\calG'}}^{-1}$. In particular, the square root $\omega_{\Bun_{\calG'}}^{1/2}$ exists. Note that the pullback of $\omega_{\Bun_{\calG'}}^{1/2}$ is isomorphic to $\omega_{\Bun_{\calG}}^{1/2}$, and therefore $\omega_{\Bun_{\calG}}^{1/2}$ is canonically trivialized over $\mathring{\Bun}_\calG$ .  
\end{rmk}

\quash{Let $S_w$ denote the preimage in $\Bun_{\calG(0,\Psi)}$ of
$C_w\subset \Bun_{\calG(0,1)}$. Then
$S_1\simeq\bbA^1$, and 
\begin{lem}
For a simple reflection $s$, $S_1\cup
S_s\simeq\bbP^1$. In particular, any regular function on
$\Bun_{\calG(0,\Psi)}$ is constant.
\end{lem}
\begin{proof}
Indeed, let $i_s:\SL_2\to G(F_0)$ denote the $\SL_2$ for the simple affine root corresponding to $s$. It induces an embedding $\bbP^1\to \Gr_{\calG(0,1),0}$, still denoted by $i_s$. As $\bbP^1\times \calG(0,1)_{out}\to \Gr_{\calG(0,1),0}$ is an open immersion, we see it projects to an open immersion $\bbP^1\to \Bun_{\calG(0,1)}$.
\end{proof}}

Now we consider the Hitchin map
\[h^{cl}: T^*\Bun_\calG\to \on{Hitch}(X)_{\calG}\subset \on{Hitch}(U).\]
Applying results in \S~\ref{loc Hitchin} and the similar argument of Lemma \ref{support}, it is easy to calculate $\on{Hitch}(X)_{\calG}$ in this case
 
\[\bigoplus_ {i<\ell} \Gamma(X,\omega_X^{d_i}((d_i-1)\cdot 0+ d_i\cdot \infty)\otimes V_{\frakg_0,d_i}\bigoplus \Gamma(X,\omega_X^{d_\ell}((d_\ell-1)\cdot 0+ (d_\ell+1)\cdot \infty)\otimes V_{\frakg_0,d_\ell},\]
which is isomorphic to $\bbA^1$.

Let $\mu: T^*\Bun_{\calG}\to V^*$ be the moment map for the action of $V$ on $\Bun_\calG$. 
\begin{lem}\label{flat moment}
The moment map $\mu: T^*\Bun_\calG\to V^*$ is flat.
\end{lem}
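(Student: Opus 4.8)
The plan is to establish flatness of $\mu$ by a dimension count, combined with the fact that $\Bun_\calG$ is good. Recall that $V^*$ is a vector space (the dual of the abelian Lie algebra $V = I(1)/I(2)$), in particular smooth; and $T^*\Bun_\calG$ is the cotangent stack of the smooth algebraic stack $\Bun_\calG$, hence has dimension $2\dim\Bun_\calG$ by Lemma \ref{goodness}. Since $V^*$ is a smooth target, a morphism from a Cohen--Macaulay source to $V^*$ is flat if and only if all fibers have the expected dimension $\dim T^*\Bun_\calG - \dim V = 2\dim\Bun_\calG - \dim V$. So the first step is to reduce to a statement about fiber dimensions.

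First I would identify the zero fiber $\mu^{-1}(0)$. The moment map $\mu$ is dual to the infinitesimal action map $V \otimes \calO_{\Bun_\calG} \to T_{\Bun_\calG}$, so $\mu^{-1}(0)$ is the union, over points of $\Bun_\calG$, of the annihilators of the image of $V$ in the tangent space; equivalently it is $T^*_{\Bun_{\calG'}}\Bun_\calG$ pulled back appropriately — more precisely, since $\Bun_\calG \to \Bun_{\calG'}$ is a $V$-torsor, the vanishing locus of $\mu$ is exactly the pullback of $T^*\Bun_{\calG'}$ along this torsor, which has dimension $\dim T^*\Bun_{\calG'} + \dim V = 2\dim\Bun_{\calG'} + \dim V = \dim V$ (using $\dim\Bun_{\calG'}=0$ from Lemma \ref{goodness}). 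On the other hand $\dim T^*\Bun_\calG = 2\dim\Bun_\calG = 2\dim V$ (again since the torsor $\Bun_\calG\to\Bun_{\calG'}$ gives $\dim\Bun_\calG = \dim\Bun_{\calG'}+\dim V = \dim V$), so the expected fiber dimension is $2\dim V - \dim V = \dim V$. Thus the zero fiber has exactly the expected dimension.

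Next I would handle the nonzero fibers by a $\bbG_m$-equivariance argument. The cotangent stack $T^*\Bun_\calG$ carries the scaling $\bbG_m$-action along the fibers, and $\mu$ is equivariant for the weight-one action on $V^*$; moreover $V^*$ has a further $T$-action (the coadjoint action of $T = L_I$ on $V^*$) for which $\mu$ is also equivariant, since $T$ acts on $\Bun_\calG$ compatibly. Combining the scaling torus with $T$, every nonzero orbit closure in $V^*$ contains $0$, so upper semicontinuity of fiber dimension forces $\dim\mu^{-1}(\xi) \le \dim\mu^{-1}(0) = \dim V$ for every $\xi\in V^*$; and $\dim\mu^{-1}(\xi) \ge \dim T^*\Bun_\calG - \dim V = \dim V$ always holds for a dominant (indeed surjective after the torus argument) map. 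Hence all fibers have dimension exactly $\dim V$. Since $\Bun_\calG$ is smooth, $T^*\Bun_\calG$ is Cohen--Macaulay (in fact a smooth stack minus nothing is fine; one uses that the cotangent complex is perfect and the stack is quasi-smooth), so the miracle flatness criterion over the smooth base $V^*$ applies and $\mu$ is flat.

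The main obstacle I expect is making the fiber-dimension bound for nonzero $\xi$ genuinely uniform — i.e.\ checking that the torus $\bbG_m\times T$ really does contract all of $V^*$ to the origin (equivalently, that $T$ acts on $V = \prod_{i=0}^\ell U_{\al_i}$ with no zero weights, which it does since each $\al_i$ is a nonzero affine root, but one must phrase this for $V^*$ and be careful that the scaling $\bbG_m$ alone already suffices). A secondary subtlety is the Cohen--Macaulay hypothesis on $T^*\Bun_\calG$ for a stack rather than a scheme; I would circumvent this by passing to a smooth atlas, where $T^*\Bun_\calG$ pulls back to the cotangent bundle of a smooth scheme and flatness can be checked, then descend. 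One could alternatively prove flatness directly via the explicit description $\mathring\Bun_\calG\simeq V$ of the open locus together with the Hitchin-section computations of \S\ref{loc Hitchin} and Proposition \ref{new residue}, but the dimension-count route is cleaner.
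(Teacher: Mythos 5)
Your argument is essentially the paper's: the key step in both is computing $\dim\mu^{-1}(0)=\dim V$ via the identification $\mu^{-1}(0)/V\simeq T^*\Bun_{\calG'}$ together with $\dim T^*\Bun_{\calG'}=0$ (from goodness of $\Bun_{\calG'}$), after which flatness follows by a dimension count. The paper leaves the scaling-$\bbG_m$ contraction to the origin and the miracle-flatness step implicit in "Therefore, $\mu$ is flat," whereas you spell them out, but the underlying reasoning is the same.
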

\begin{proof}Consider the Hamiltonian reduction $\mu^{-1}(0)/V\simeq T^*\Bun_{\calG'}$. By the proof of Lemma \ref{goodness}, $\Bun_{\calG'}$ is good, so $T^*\Bun_{\calG'}$ is of dimension zero. This implies that $\dim \mu^{-1}(0)=\dim V$. Therefore, $\mu$ is flat.
\end{proof}

We also need a global version of
Proposition \ref{new residue} in this case.
\begin{lem}\label{global new residue}
There is the following commutative diagram, with all arrows surjective.
\[\begin{CD}
T^*\Bun_\calG @>\mu>> V^*\\
@VVV@VVV\\
\on{Hitch}(X)_{\calG}@>>> V^*/\!\! /T.
\end{CD}\]
In addition, the bottom arrow is an isomorphism. 
\end{lem}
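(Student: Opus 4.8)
The plan is to exhibit the square as the globalization, at the point $\infty\in\bbP^1$, of the local commutative diagram of Proposition~\ref{new residue} applied to the Iwahori subgroup $I\subset G(F_\infty)$. Recall from \S~\ref{loc Hitchin} that the associated torsion automorphism $\theta_I$ is principal, that the parahoric $P$ of that section is now the Iwahori $I$, so $V_P=V$ and $L_P=T$, that $m=h$ is the Coxeter number of $\frakg_0$, and that $\frakp(2)=\Lie\calG(\calO_\infty)$ regarded as an $\calO_\infty$-lattice in $\frakg_0\otimes F_\infty$ (since $\calG(\calO_\infty)=I(2)$). Thus Proposition~\ref{new residue} already supplies a commutative square of surjections whose top edge is the residue map $\frakp(2)^\perp\twoheadrightarrow V^*$, whose left edge is the local Hitchin map $\frakp(2)^\perp\to\on{Hitch}(D_\infty)_{\frakp(2)}$, and whose bottom edge is $\on{Hitch}(D_\infty)_{\frakp(2)}\to V^*/\!\!/T$.

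The first step is to build a $\bbG_m$-equivariant map $r_\infty\colon T^*\Bun_\calG\to\frakp(2)^\perp$ through which both $\mu$ and (the restriction to $D^\times_\infty$ of) $h^{cl}$ factor. Given $(\calF,\phi)\in T^*\Bun_\calG$ with $\phi\in H^0(\bbP^1,\ad(\calF)^\vee\otimes\omega_{\bbP^1})$, a trivialization of $\calF$ over the formal disc $D_\infty$ identifies $\ad(\calF)|_{D_\infty}$ with the lattice $\frakp(2)$, so the restriction of $\phi$ to $D_\infty$ lands in $\frakp(2)^\vee\otimes\omega_{\calO_\infty}=\frakp(2)^\perp$; since $I(2)$ acts trivially on $\frakp(2)^\perp/\frakp(1)^\perp$ and $\on{Hitch}$ is $G(F_\infty)$-invariant, composing this restriction with the two legs of Proposition~\ref{new residue} gives well-defined, trivialization-independent maps $T^*\Bun_\calG\to V^*$ and $T^*\Bun_\calG\to\on{Hitch}(D_\infty)_{\frakp(2)}$. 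By the standard description of the moment map of a loop-type translation action via the residue pairing, the first of these is $\mu$; and the second is $h^{cl}$ followed by restriction to $D^\times_\infty$, whose image indeed lies in $\on{Hitch}(D_\infty)_{\frakp(2)}$, as is also visible from the explicit description of $\on{Hitch}(X)_\calG$ recorded above. Taking the bottom edge of the Lemma's square to be $\on{Hitch}(X)_\calG\hookrightarrow\on{Hitch}(D_\infty)_{\frakp(2)}\to V^*/\!\!/T$ and the right edge to be the GIT quotient, commutativity of the Lemma's square is then precisely commutativity of the local square of Proposition~\ref{new residue}.

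Next, surjectivity. The GIT quotient $V^*\to V^*/\!\!/T$ is surjective. The moment map $\mu$ is flat by Lemma~\ref{flat moment}, hence open; its image is a $\bbG_m$-stable open subset of $V^*$ (here $\bbG_m$ scales cotangent fibers, acting with weight $1$ on all of $V^*$) containing $0=\mu(\text{zero section})$, hence is all of $V^*$. The left edge $h^{cl}\colon T^*\Bun_\calG\to\on{Hitch}(X)_\calG$ is surjective because $\on{Hitch}(X)_\calG$ is by construction the scheme-theoretic image, so the set-theoretic image is dense, constructible, $\bbG_m$-stable and contains $0$, hence is all of $\on{Hitch}(X)_\calG\cong\bbA^1$. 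Surjectivity of the bottom edge then follows from commutativity, since precomposing it with the surjection $T^*\Bun_\calG\twoheadrightarrow\on{Hitch}(X)_\calG$ yields $T^*\Bun_\calG\xrightarrow{\mu}V^*\twoheadrightarrow V^*/\!\!/T$, a composite of surjections.

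Finally I would identify both $\on{Hitch}(X)_\calG$ and $V^*/\!\!/T$ with $\bbA^1$ carrying a single $\bbG_m$-weight. On the one hand, $\on{Hitch}(X)_\calG\cong\bbA^1$ was computed above, with the cotangent scaling acting through the weight $d_\ell$. On the other hand, $T$ acts on $V\cong\prod_{i=0}^\ell U_{\alpha_i}$ through the affine simple roots $\alpha_0,\dots,\alpha_\ell$, so on $V^*$ the $T$-weights are $-\alpha_0,\dots,-\alpha_\ell$; a weight-zero monomial in the coordinates of $V^*$ is forced to be a power of the single monomial of total degree $1+\on{ht}(\theta)=h=d_\ell$, so $\bbC[V^*]^T$ is a polynomial ring on one generator of cotangent weight $d_\ell$ and $V^*/\!\!/T\cong\bbA^1$ (equivalently, $V^*/\!\!/T\cong(f'+\frakg_0^{e'})\cap\frakg_0(-1)$ as in \S~\ref{loc Hitchin}, which is one-dimensional because only the largest exponent $h-1$ of $\frakg_0$ is $\equiv-1\bmod h$). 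A $\bbG_m$-equivariant morphism between two copies of $\bbA^1$ of the same nonzero weight is linear, and a surjective linear map is an isomorphism; this gives the last assertion. The step I expect to be the main obstacle is the first one — the careful bookkeeping showing that the two legs of the Lemma's square really are the two legs of Proposition~\ref{new residue} composed with $r_\infty$, i.e. that the boundary value of a Higgs field at $\infty$ lies in $\frakp(2)^\perp$ and that the $V$-moment map is the residue pairing; everything afterwards is a short formal deduction from Proposition~\ref{new residue}, Lemma~\ref{flat moment}, and the $\bbP^1$-computation of $\on{Hitch}(X)_\calG$.
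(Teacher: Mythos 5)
Your proof is correct and follows essentially the same route as the paper: restrict the Higgs field to $D_\infty$, invoke Proposition~\ref{new residue} (with $P=I$, $V_P=V$, $L_P=T$, $m=h$) for the local commutative square, and deduce the global one. The paper's own proof is terse on the surjectivity of the vertical arrows; your arguments via Lemma~\ref{flat moment} (flatness of $\mu$ implies openness of its image, which is $\bbG_m$-stable and contains $0$, hence everything) and via constructibility and $\bbG_m$-stability of the image of $h^{cl}$ fill that in cleanly.

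The one place you genuinely diverge is the final step. The paper proves the bottom arrow is an isomorphism by an explicit one-line computation: it identifies $V^*/\!\!/T$ with the residue line $\omega^{d_\ell}_\calO(d_\ell+1)/\omega^{d_\ell}_\calO(d_\ell)$ (via the Vinberg quotient of Proposition~\ref{new residue}) and observes that $\on{Hitch}(X)_\calG$, being $\Gamma(\bbP^1,\calO)\otimes V_{\frakg_0,d_\ell}$, maps isomorphically onto it by evaluation at $\infty$. You instead identify both sides abstractly as $\bbA^1$ carrying the single $\bbG_m$-weight $d_\ell=h$ (using $1+\on{ht}(\theta)=h$ and that only the largest exponent $h-1$ is $\equiv -1\pmod h$), and conclude that a surjective $\bbG_m$-equivariant endomorphism of such an $\bbA^1$ must be linear, hence an isomorphism. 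Both arguments are correct; yours requires having established surjectivity first, whereas the paper's direct computation gives surjectivity for free, but your weight-rigidity argument has the merit of making transparent why no higher-degree map can occur and of matching the spirit of Remark~\ref{general vinberg}.
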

\begin{proof}
By restricting to the formal neighbourhood of $\infty$, the global Hitchin map embeds into the local Hitchin map, and therefore Proposition \ref{new residue} gives the commutative diagram. To see the bottom arrow is an isomorphism, it is enough to observe that in our special case the composition
$$\on{Hitch}(X)_\calG\to \on{Hitch}(D)_{\frakp(2)}\to \bigoplus_{d_i\mid m} \omega^{d_i}_\calO(d_i+\frac{d_i}{m})/\omega^{d_i}_\calO(d_i+\frac{d_i}{m}-1)\otimes{^L}V_{\frakg,d_i}= \omega^{d_\ell}_\calO(d_\ell+1)/\omega^{d_\ell}_\calO(d_\ell)$$ is an isomorphism.
\end{proof}

\begin{rmk}\label{general vinberg}
The above discussions can be generalized. Namely, given any simple, simply-connected group $G$ over $\bbC((t))$ and a parahoric subgroup $P\subset G(\bbC((t)))$, one can construct the corresponding group scheme $\calG=\calG(0,2)_P$ over $\bbP^1$, unramified over $\bbP^1\setminus\{0,\infty\}$, and $\calG(\calO_0)\simeq P^{\on{opp}}$ and $\calG(\calO_\infty)= P(2)$, generalizing $\calG(0,2)$. Then Lemma \ref{goodness} and \ref{flat moment} generalize in this case. In addition, if $\theta_P$ is principal, Lemma \ref{global new residue} generalizes as well, i.e. we have
\[\begin{CD}
T^*\Bun_\calG @>\mu>> V_P^*\\
@VVV@VVV\\
\on{Hitch}(X)_{\calG}@>>> V_P^*/\!\! /L_P.
\end{CD}\]
To prove this, applying Proposition \ref{size of image}, we conclude that $\on{Hitch}(X)_\calG\to V^*_P/\!\!/L_P$ is a closed embedding. But $T^*\Bun_\calG\to V^*_P\to V^*_P/\!\!/L_P$ is clearly surjective, so $\on{Hitch}(X)_\calG\simeq V^*_P/\!\!/L_P$ if $\theta_P$ is principal.
\end{rmk}

Now we quantize the above picture.
Let us describe $\on{Op}_{{^L}\frakg}(X)_{\calG}$ in this
case. 

At $0\in\bbP^1$, $K_0=\calG(\calO_0)=I^{\opp}:=\on{ev}_0^{-1}(B^{\opp})$. We use the inclusion $K_0\subset G_0(\calO_0)$ to split $\hat{\frakk}_0\simeq \frakk_0\oplus\bbC\mathbf{1}$ (see Remark \ref{diff splitting} for the subtlety). Then Lemma \ref{vacuum I} specializes to
\[\on{Vac}_0=\Ind_{\Lie I^{\opp} + \bbC \bf{1}}^{\hat{\frakg}_{c,0}}(\bbC_{\rho}),\]
which is just the Verma module $\bbM^{\opp}_{\rho}$ for $I^{\opp}$. 
Here $\rho$ is half sum of roots of $B$, so is \emph{anti-dominant} w.r.t. $B^{\opp}$. It is known
(\cite[Corollary 13.3.2]{FGa} or \cite[Theorem 9.5.3]{F}) that
$\on{Fun}\on{Op}_{{^L}\frakg}(D^\times_0)\to\End (\bbM^{\opp}_{\rho})$
induces an isomorphism
\[\on{Fun}\on{Op}_{{^L}\frakg}(D_0)_{\varpi(0)}\simeq\End(\bbM^{\opp}_{\rho}),\]
where $\on{Op}_{{^L}\frakg}(D_0)_{\varpi(0)}$ is the scheme of
${^L}\frakg$ opers on $D_0$ with regular singularities and
residue $\varpi(0)$. In fact, in \emph{loc. cit.}, this is proved for the Verma module $\bbM_{-\rho}$ of $I$. But
if we choose an element $\tilde{w}_0\in G(\bbC)$, conjugation by which switches $B$ and $B^{\opp}$, and define the new action of $\hat{\frakg}_{c,0}$ on $\bbM_{-\rho}$ by $X\cdot v=\Ad_{\tilde{w}_0}(X) v$, then the resulting module
$\bbM_{-\rho}^{\tilde{w}_0}$ is isomorphic to $\bbM_{\rho}^{\opp}$.
Therefore the central supports of $\bbM_{-\rho}$ and $\bbM_{\rho}^{\opp}$ coincide.

We refer to \cite[\S~2]{FGa} for the precise definition of $\on{Op}_{{^L}\frakg}(D_0)_{\varpi(0)}$ as a moduli scheme. Here, we describe this space in concrete terms following the style as in \S~\ref{endo alg}. Let 
$\{e,h,f\}$ be a principal $\fraks\frakl_2$-triple as with with $e\in {^L}\frakb$.  Then 
after choosing a 
uniformizer $t$ of the disc $D_0$,
$\on{Op}_{{^L}\frakg}(D_0)_{\varpi(0)}$ is the space of operators 
\begin{equation}\label{zero residue}
 \{\partial_t+\frac{f}{t}+{^L}\frakg^e\otimes \calO_0\}.
\end{equation}
Indeed, it is known from \cite[\S~9.1]{F} that the space of opers with regularities can be identified with the space of operators of the form 
$$\{\partial_t+\frac{1}{t}(f+ {^L}\frakb\otimes\calO_0)\}/{^L}U(\calO_0)\simeq  \{\partial_t+\frac{1}{t}(f+{^L}\frakg^e\otimes \calO_0)\}.$$ The condition of the residue cuts out the subspace \eqref{zero residue}. It is also explained in \emph{loc. cit.} that after a gauge transformation, \eqref{zero residue} embeds into \eqref{oper space} as a closed subscheme.

\quash{
Then 
after choosing a 
uniformizer $z$ of the disc $D_0$,
$\on{Op}_{{^L}\frakg}(D_0)_{\varpi(0)}$ is the space of operators of
the form
\[\partial_z+\frac{f}{z}+{^L}\frakb[[z]].\]
up to ${^L}U(\calO_0)$-gauge equivalence. Indeed, the space of opers with regular singularities is the space of operators of the form $\partial_z+\frac{f}{z}+\frac{1}{z}{^L}\frakb[[z]]$ up to ${^L}U(\calO_0)$-gauge equivalence. The condition that it residue is $\varpi(0)$
Equivalently, we can describe it as follows

Let us complete $f$ to an $\fraks\frakl_2$-triple
$\{e,\gamma,f\}$ with $e\in {^L}\frakn$, and $[\gamma,e]=e$. Let ${^L}\frakg^e=\oplus_{i=1}^{\ell}{^L}\frakg^e_i$ be the
centralizer of $e$ in ${^L}\frakg$, decomposed
according to the
principal grading by $\gamma$, and let $d_i$ be the corresponding degree of ${^L}\frakg^e_i$ (so $\{d_i\}$ are the exponents of ${^L}\frakg$). Then 
after choosing a 
uniformizer $z$ of the disc $D_0$,
$\on{Op}_{{^L}\frakg}(D_0)_{\varpi(0)}$ is the space of operators of
the form
\[\partial_z+f+\gamma+\sum_{i=1}^{\ell} z^{-d_i}({^L}\frakg^e_i)[[z]].\]
I.e. $\on{Op}_{{^L}\frakg}(D_0)_{\varpi(0)}$ is a torsor under the infinite dimensional affine space $\sum_{i=1}^{\ell} z^{-d_i}({^L}\frakg^e_i)[[z]]$.
}

At $\infty\in\bbP^1$, $K_\infty=\calG(0,2)(\calO_\infty)=I(2)$, so
\[\on{Vac}_\infty=\Vac_{\frakp(2)},\]
as introduced in \S~\ref{endo alg}.
As in this case $\frakp$ is the Iwahori subalgebra so $m=h$ is the Coxeter number, we also write $\on{Op}_{{^L}\frakg}(D)_{\frakp(2)}$ as defined in \S~\ref{endo alg} by
$\on{Op}_{{^L}\frakg}(D_\infty)_{1/h}$.
Choose a uniformizer $t\in F_\infty$. Recall from \eqref{oper space} and \eqref{irr oper} that
$\on{Op}_{{^L}\frakg}(D_\infty)_{1/h}\subset \on{Op}_{{^L}\frakg}(D^\times_\infty)$ is the space of operators of
the form
\[\partial_t+f+\sum_{i<\ell} t^{-d_i}V_{{^L}\frakg,d_i}\otimes \calO_\infty+t^{-d_\ell-1}V_{{^L}\frakg,d_\ell}\otimes \calO_\infty.\]
After a gauge transformation, this space is identified with
\[\{\partial_t+\frac{f}{t}+\frac{1}{t}{^L}\frakb\otimes\calO_\infty+\frac{1}{t^2}{^L}\frakg_\theta\otimes\calO_\infty\}/{^L}U(\calO_\infty),\]
where ${^L}\frakg_\theta$ is the root subspace for the highest root $\theta$ of ${^L}\frakg$ (with respect to ${^L}\frakb$).

Therefore, by Lemma \ref{support}, $\on{Op}_{{^L}\frakg}(X)_{\calG}$ is isomorphic
to
\[\on{Op}_{{^L}\frakg}(X)_{(0,\varpi(0)),(\infty,1/h)}:=\on{Op}_{{^L}\frakg}(D_\infty)_{1/h}\times_{\on{Op}_{{^L}\frakg}(D_\infty^\times)}\on{Op}_{{^L}\frakg}(\bbG_m)\times_{\on{Op}_{{^L}\frakg}(D_0^\times)}\on{Op}_{{^L}\frakg}(D_0)_{\varpi(0)}.\]
As observed in \cite[\S~5]{FG},
\begin{lem}There is a (non-canonical) isomorphism
$\on{Op}_{{^L}\frakg}(X)_{(0,\varpi(0)),(\infty,1/h)}\simeq\bbA^1$.
\end{lem}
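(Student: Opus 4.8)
The plan is to reduce the statement to an elementary computation with Laurent polynomials, essentially re-deriving the argument of \cite[\S~5]{FG} in the present setup. First fix the standard coordinate $z$ on $\bbG_m=\bbP^1\setminus\{0,\infty\}$, so that $z$ is a uniformizer at $0$ and $w=z^{-1}$ one at $\infty$, and fix a principal $\fraks\frakl_2$-triple $\{e,h,f\}$ of ${^L}\frakg$ with $e\in{^L}\frakb$. Trivializing $\omega_{\bbG_m}$ by $dz$ and using the Kostant (canonical) form, one identifies $\on{Op}_{{^L}\frakg}(\bbG_m)$ with the ind-scheme of operators $\partial_z+f+\sum_{i=1}^{\ell}\tilde v_i(z)$ with $\tilde v_i\in V_{{^L}\frakg,d_i}\otimes\bbC[z,z^{-1}]$, the $V_{{^L}\frakg,d_i}$-component being taken against $(dz)^{d_i}$. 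With this description, restriction to $D_0^\times$ is just Laurent expansion in $z$, while restriction to $D_\infty^\times$ is the substitution $z\mapsto w^{-1}$ followed by the gauge transformation putting the result into canonical form at $\infty$.

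Next, translate the two local conditions into conditions on the $\tilde v_i$. For the point $0$: starting from the explicit model \eqref{zero residue} of $\on{Op}_{{^L}\frakg}(D_0)_{\varpi(0)}$ and applying the standard gauge transformation putting $\partial_z+\tfrac fz+{^L}\frakg^e\otimes\calO_0$ into canonical form (the computation for $\fraks\frakl_2$ reproducing the familiar $-\tfrac14 z^{-2}$), one finds that an oper on $\bbG_m$ restricts into $\on{Op}_{{^L}\frakg}(D_0)_{\varpi(0)}$ if and only if each $\tilde v_i$ is supported in powers of $z$ that are $\ge-d_i$, with coefficient at $z^{-d_i}$ equal to a fixed vector $\kappa_i\in V_{{^L}\frakg,d_i}$ depending only on ${^L}\frakg$. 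For the point $\infty$: here $\frakp$ is an Iwahori subalgebra, so $m=h=d_\ell$, and the description of $\on{Op}_{{^L}\frakg}(D_\infty)_{1/h}=\on{Op}_{{^L}\frakg}(D)_{\frakp(2)}$ recalled just before the statement says precisely that, in the coordinate $w$, the canonical-form component lies in $w^{-d_i}\calO_\infty\otimes V_{{^L}\frakg,d_i}$ for $i<\ell$ and in $w^{-d_\ell-1}\calO_\infty\otimes V_{{^L}\frakg,d_\ell}$ for $i=\ell$, with no further constraint (this being the full torsor). Transporting this back along $z\mapsto w^{-1}$ — using that a $d_i$-differential acquires the factor $w^{-2d_i}$ under this Möbius change, that the Schwarzian correction to $\tilde v_1$ vanishes, and that the remaining corrections are lower-triangular in the $\tilde v_j$ — turns it into: $\tilde v_i$ is supported in powers of $z$ that are $\le-d_i$ for $i<\ell$, and $\le1-d_\ell$ for $i=\ell$.

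Finally, intersect the two conditions. For $i<\ell$ the component $\tilde v_i$ must be supported in powers simultaneously $\ge-d_i$ and $\le-d_i$, hence equals the single monomial $\kappa_i z^{-d_i}$ and is completely determined. For $i=\ell$ it is supported in $\{-d_\ell,\,1-d_\ell\}$ with the coefficient at $z^{-d_\ell}$ pinned to $\kappa_\ell$, so $\tilde v_\ell=\kappa_\ell z^{-d_\ell}+c\,z^{1-d_\ell}$ with $c$ a single free scalar (the correction terms, built from the now-fixed lower components and the Möbius change, can only alter the values of the pinned coefficients, not the freedom in $c$). Therefore the fibre product is $\Spec\bbC[c]\simeq\bbA^1$; the isomorphism depends on the auxiliary choices of $z$ and of the $\fraks\frakl_2$-triple, which accounts for its non-canonicity.

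I expect the only real obstacle to be bookkeeping: keeping the normalizations consistent (which power of the canonical bundle, which local coordinate and trivialization at each of $0$ and $\infty$) and carrying out the two gauge transformations — converting \eqref{zero residue} to canonical form at $0$, and converting the $z$-coordinate canonical form to the $w$-coordinate one at $\infty$ — carefully enough to see that the residual correction terms are harmless. Conceptually nothing is needed beyond intersecting a ``polynomial'' condition with a ``polynomial in $z^{-1}$'' condition, so once the dictionary is in place the conclusion is immediate; alternatively one may simply quote the computation of \cite[\S~5]{FG} after matching the two descriptions of the local oper spaces.
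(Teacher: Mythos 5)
Your plan is the right one and your final answer is correct, but the specific gauge you chose makes the argument noticeably heavier than it needs to be, and it leaves precisely the ``bookkeeping'' step you flag as a genuine loose end. You work in the Kostant canonical form $\partial_z+f+\sum_i\tilde v_i$ at both ends. That forces two nontrivial translations: (a) you have to gauge-transform the model $\partial_t+\frac{f}{t}+{^L}\frakg^e\otimes\calO_0$ of \eqref{zero residue} into canonical form to extract your pinned coefficients $\kappa_i$ at $z^{-d_i}$, and (b) you have to transport the canonical form at $\infty$ (in the coordinate $w$) across the Möbius change $z\mapsto w^{-1}$, which mixes the components $\tilde v_i$ in a lower-triangular way. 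You assert that the Schwarzian vanishes (true, since $z\mapsto 1/z$ is Möbius) and that the remaining lower-triangular corrections ``can only alter the values of the pinned coefficients, not the freedom in $c$,'' but this is exactly the thing one would have to verify: a priori those corrections could spread the support of the transported condition, or pin a coefficient of $\tilde v_i$ to a value incompatible with $\kappa_i$. Your intuition that it works out is correct, but as written the argument defers the real content.

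The paper avoids all of this by choosing a better gauge. It writes opers on $\bbG_m$ not in Kostant form but in the ``shifted'' form $\nabla=\partial_z+\tfrac{f}{z}+v$ with $v\in{^L}\frakg^e[z,z^{-1}]$, which is simultaneously adapted to both local models: the regular-singular condition at $0$ with residue $\varpi(0)$ is, by \eqref{zero residue}, literally $v\in{^L}\frakg^e[z]$, and because $\tfrac{dz}{z}=-\tfrac{dt}{t}$ under $t=1/z$, the operator passes to $\partial_t+\tfrac{f}{t}+t^{-2}v(1/t)$ with no gauge correction at all, so the condition at $\infty$ becomes $v_j=0$ for $j\geq 1$ and $v_0\in{^L}\frakg_\theta$. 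Intersecting, $v=a e_\theta$ with $a\in\bbC$, and no $\kappa_i$'s or correction terms ever appear. So the paper's proof is the same ``intersect two local conditions'' computation you outline, but in a gauge where both conditions are literal support constraints on the same $v$; if you want to keep the Kostant-form version you would need to actually carry out the change-of-canonical-form computation you postpone, and it is simpler to switch gauges up front.
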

\begin{proof}
Indeed, any ${^L}B$-torsor on $\bbG_m$ is trivial. If we fix the $\fraks\frakl_2$-triple $\{e,h,f\}$ as above,
then after choosing a coordinate $z$ on $\bbG_m$, the space of opers on $\bbG_m$ is the space of operators of the form
\[\nabla=\partial_z+ \frac{f}{z} + v , \]
where $v(z)\in {^L}\frakg^e[z,z^{-1}]$. The condition at $0$ implies that $v(z)\in {^L}\frakg^e[z]$. At the $\infty$, the local coordinate is $t=1/z$ so such an operator becomes
\[\nabla=\partial_t+\frac{f}{t} + \frac{1}{t^2}v(\frac{1}{t}).\]
The condition at $\infty$ then implies that $v\in {^L}\frakg_\theta$ is constant. Choosing a root vector $e_\theta\in {^L}\frakg_\theta$,
Then the space $\on{Op}_{{^L}\frakg}(X)_{(0,\varpi(0)),(\infty,1/h)}$ is the space of opers  of the
form
\[\nabla=\partial_z+ \frac{f}{z}+a e_{\theta},\]
with $a\in\bbC$.
So it is isomorphic to $\Spec \bbC[a]$.
\end{proof}

Now combining Proposition \ref{endo diag} and Lemma \ref{global new residue} we obtain 
\begin{lem} 
The following diagram is commutative.
\[\begin{CD}
U(V)^T@>\simeq>> \on{Fun}\on{Op}_{{^L}\frakg}(X)_{(0,\varpi(0)),(\infty,1/h)}\\
@VVV@VVh_\nabla V\\
U(V)@>>> \Gamma(\Bun_{\calG},D').
\end{CD}\]
\end{lem}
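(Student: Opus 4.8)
The plan is to realise the top arrow as an identification of two concrete subalgebras of $\frakZ_{\frakp(2)}=\calE nd(\calV ac_X)_\infty$, and then to chase the square by reducing everything at $\infty$ via the horizontality of $h$ and the compatibility of the localization functor with the $V$-action on $\Bun_\calG$.

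First I would recall that $\on{Op}_{{^L}\frakg}(X)_{(0,\varpi(0)),(\infty,1/h)}=\on{Op}_{{^L}\frakg}(X)_\calG$ and $\on{Fun}\on{Op}_{{^L}\frakg}(X)_\calG=H_\nabla(X,\calE nd(\calV ac_X))$. By Lemma \ref{flat} the sheaf $\calV ac_X$ is $\calO_X$-flat, so (as in the proof of Lemma \ref{support}) the fibre $\calE nd(\calV ac_X)_\infty\hookrightarrow\End(\Vac_\infty)$ is injective and identifies $\calE nd(\calV ac_X)_\infty$ with $\frakZ_{\frakp(2)}$. Evaluating horizontal sections at $\infty$ gives an algebra map $\on{Fun}\on{Op}_{{^L}\frakg}(X)_\calG\to\frakZ_{\frakp(2)}$; call its image $B$. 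By Proposition \ref{endo diag} (applied to $P=I$, $V_P=V$, $L_P=T$) one has $U(V)^T=\frakZ_{\frakp(2)}\cap U(V)$ inside $\End(\Vac_\infty)$, and moreover $U(V)^T=U(V)\cap\Vac_\infty^I$. The claim to establish is $B=U(V)^T$; the top arrow is then the resulting isomorphism $U(V)^T=B\xleftarrow{\ \sim\ }\on{Fun}\on{Op}_{{^L}\frakg}(X)_\calG$. All these algebras carry the filtration induced from $\hat{U}_{c}(\frakg_\infty)$, and on associated graded $\on{gr}B$ is the image of $\on{Fun}\on{Hitch}(X)_\calG$ in $\on{gr}\frakZ_{\frakp(2)}=\on{Fun}\on{Hitch}(D)_{\frakp(2)}$, while $\on{gr}U(V)^T=\on{Fun}(V^*/\!\!/T)$; these two coincide because Lemma \ref{global new residue} identifies $\on{Hitch}(X)_\calG\simeq V^*/\!\!/T$ compatibly with the classical residue map $\on{Hitch}(D)_{\frakp(2)}\to V^*/\!\!/T$ of Proposition \ref{new residue}, which is exactly the map computing $\on{gr}$ of $U(V)^T\hookrightarrow\frakZ_{\frakp(2)}$. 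Combined with $B,U(V)^T\subseteq\frakZ_{\frakp(2)}\subseteq\Vac_\infty^I$ and finiteness in each graded piece, this should force $B=U(V)^T$.

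Next, since $h\colon\calE nd(\calV ac_X)\to\Gamma(\Bun_\calG,D')\otimes\calO_X$ is a horizontal morphism of $\calD_X$-algebras (\cite[\S~2.8]{BD}), $h_\nabla$ factors as the evaluation map $\on{Fun}\on{Op}_{{^L}\frakg}(X)_\calG\to\frakZ_{\frakp(2)}$ followed by the fibre $h_\infty\colon\frakZ_{\frakp(2)}\to\Gamma(\Bun_\calG,D')$ of $h$ at $\infty$, and $h_\infty$ is the localization homomorphism $\frakZ_{\frakp(2)}\subset\End(\Vac_\infty)\to\Gamma(\Bun_\calG,D')$ coming from $\on{Loc}(\Vac_\infty)\simeq\calD'$. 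On the other hand, writing $\Vac_\infty=\Vac_{\frakp(2)}=\on{Ind}^{\hat{\frakg}_{c}}_{\frakp(1)+\bbC\mathbf{1}}U(V)$, right multiplication embeds $U(V)\hookrightarrow\End(\Vac_\infty)$, and I would invoke the standard compatibility of localization with the residual action of $N_{G(F_\infty)}(K_\infty)/K_\infty$ — a group containing $V=I(1)/I(2)$, the structure group of the torsor $\Bun_\calG\to\Bun_{\calG'}$ (cf.\ \cite{HNY}) — to identify the image of $U(V)$ under $\on{Loc}$ with its image under the infinitesimal $V$-action on $\Bun_\calG$, i.e.\ with the bottom arrow of the diagram; here one uses that $\omega_{\Bun_\calG}^{1/2}$ is canonically trivialized over $\mathring{\Bun}_\calG$ (Remark \ref{sqr5}), so the metaplectic twist is $V$-equivariant.

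Putting these together: Proposition \ref{endo diag} says the inclusions $U(V)^T\hookrightarrow\frakZ_{\frakp(2)}\subset\End(\Vac_\infty)$ and $U(V)^T\hookrightarrow U(V)\subset\End(\Vac_\infty)$ agree (both are the inclusion of $U(V)^T=\frakZ_{\frakp(2)}\cap U(V)$), so post-composing with $\End(\Vac_\infty)\to\Gamma(\Bun_\calG,D')$ and using the previous two paragraphs, the two ways around the square both become $U(V)^T\hookrightarrow\End(\Vac_\infty)\to\Gamma(\Bun_\calG,D')$, which is the asserted commutativity. The hard part will be the claim $B=U(V)^T$ in the first step: one must check not only that the two subalgebras of $\frakZ_{\frakp(2)}$ have the same associated graded — which is precisely where Proposition \ref{endo diag} and Lemma \ref{global new residue} combine — but that this forces equality, which requires keeping track of the $I$-equivariance together with the precise compatibility between the Feigin--Frenkel filtration on $A_{{^L}\frakg}(D^\times_\infty)$ and the grading on the classical Hitchin space. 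The localization-versus-$V$-action compatibility is standard but should be spelled out with care because of the $\omega_{\Bun_\calG}^{1/2}$-twist.
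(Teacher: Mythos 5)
Your strategy matches the paper's: the paper states the lemma as an immediate consequence of Proposition~\ref{endo diag} and Lemma~\ref{global new residue}, and you are (correctly) filling in the implicit argument with precisely those two ingredients plus Proposition~\ref{new residue} and the horizontality of $h$. Your final sanity-check about localization versus the $V$-action on $\Bun_\calG$, while true and useful later for equivariance, is not actually needed for the lemma itself, since the bottom arrow of the square is by construction (Corollary~\ref{variant}) the composite $U(V)\hookrightarrow\End(\Vac_\infty)\to\Gamma(\Bun_\calG,D')$, not the a priori separate map coming from the infinitesimal $V$-action.

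There is, however, a direction error that propagates through your argument and needs to be fixed. In the Beilinson--Drinfeld formalism, $H_\nabla(X,\calE nd(\calV ac_X))$ is the \emph{conformal blocks} (coinvariants) construction: its spectrum is the scheme of horizontal sections of $\Spec\calE nd(\calV ac_X)\to X$. Evaluating a horizontal section at $\infty$ gives a morphism of schemes $\Spec H_\nabla\to\Spec\calE nd(\calV ac_X)_\infty=\Spec\frakZ_{\frakp(2)}$, hence a ring map $\frakZ_{\frakp(2)}\twoheadrightarrow \on{Fun}\on{Op}_{{^L}\frakg}(X)_\calG$, \emph{not} the map $\on{Fun}\on{Op}_{{^L}\frakg}(X)_\calG\to\frakZ_{\frakp(2)}$ you write. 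Likewise on the classical side: $\on{Hitch}(X)_\calG$ embeds into $\on{Hitch}(D_\infty)_{\frakp(2)}$, so on functions one has a surjection $\on{Fun}\on{Hitch}(D)_{\frakp(2)}\twoheadrightarrow\on{Fun}\on{Hitch}(X)_\calG$, and ``the image of $\on{Fun}\on{Hitch}(X)_\calG$ in $\on{Fun}\on{Hitch}(D)_{\frakp(2)}$'' is not defined. Consequently the subalgebra $B\subset\frakZ_{\frakp(2)}$ around which you organize the argument does not exist.

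The fix keeps your ideas but reverses the arrows: define the top map directly as the composite $U(V)^T\hookrightarrow\frakZ_{\frakp(2)}\twoheadrightarrow\on{Fun}\on{Op}_{{^L}\frakg}(X)_\calG$ and check it is an isomorphism by passing to associated graded, where by Proposition~\ref{new residue} and Lemma~\ref{global new residue} it becomes $\on{Fun}(V^*/\!\!/T)\hookrightarrow\on{Fun}\on{Hitch}(D)_{\frakp(2)}\twoheadrightarrow\on{Fun}\on{Hitch}(X)_\calG\simeq\on{Fun}(V^*/\!\!/T)$, the identity. This also removes the difficulty you flag at the end (``same associated graded of two subalgebras need not force equality''): once the statement is phrased as a single filtered map whose $\on{gr}$ is an isomorphism, equality is automatic because the filtrations are bounded below and exhaustive. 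With this corrected set-up, horizontality of $h$ gives $h_\nabla\circ(\frakZ_{\frakp(2)}\twoheadrightarrow\on{Fun}\on{Op}_{{^L}\frakg}(X)_\calG)=h_\infty$, and Proposition~\ref{endo diag} identifies the two ways of including $U(V)^T$ into $\End(\Vac_{\frakp(2)})$, yielding commutativity exactly as you intend.
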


Now we can apply Corollary \ref{variant}, where $A$ is replaced by $U(V)$ in the current situation. 
Given $\chi\in \Spec U(V)$, corresponding to the character $\varphi_\chi: U(V)\to \bbC$, the point $\pi(\chi)$ on $\on{Op}_{{^L}\frakg}(X)_{(0,\varpi(0)),(\infty,1/h)}$ then can be represented by the trivial ${^L}B$-torsor on $\bbG_m$ with the connection given by
\begin{equation}\label{Kl}
\nabla=\partial_z+ \frac{f}{z}+ \varphi_\chi(g)e_\theta,
\end{equation}
where $g\in U(V)^T$ is the generator of this algebra corresponding to $a$ under the isomorphism $U(V)^T\simeq \on{Fun}\on{Op}_{{^L}\frakg}(X)_{(0,\varpi(0)),(\infty,1/h)}$, and 
\[\Aut_\calE=\omega_{\Bun_\calG}^{-1/2}\otimes (\calD'\otimes_{U(V),\varphi_\chi}\bbC)\] 
is a Hecke-eigensheaf for this connection $\calE$. Note that since $\mu^{-1}(0)$ is a Lagrangian, $\Aut_\calE$ is holonomic.

We specialize to the case when the character $\varphi_\chi:U(V)\to \bbC$ is induced by an additive character $\Psi: V\to \bbG_a$. We use $\calL_\Psi$ to denote the pullback via $\Psi$ of the exponential D-module on $\bbG_a$, which is a rank one local system on $V$. 
Since the map $U(V)\to \Gamma(\Bun_{\calG},D')$ geometrically comes from the action of $V$ on $\Bun_{\calG}$, $\Aut_\calE$ is in fact $(V,\calL_\Psi)$-equivariant on $\Bun_{\calG}$. Let $\mathring{\Bun}_{\calG}\subset\Bun_{\calG}$ be the open substack introduced before Lemma \ref{goodness}, on which $V$ acts simply-transitively. Since $\omega_{\Bun_\calG}^{-1/2}$ is canonically trivialized over this open substack (Remark \ref{sqr5}), the restriction of $\Aut_\calE$ to $\mathring{\Bun}_{\calG}$ is just $\calL_\Psi$. 

We further specialize to the case when $\Psi$ is generic (see \cite[\S~1.3]{HNY} for the meaning). Then as argued in \cite[Lemma 2.3]{HNY} (which works  in the D-module sitting without change), any $(V,\calL_\Psi)$-equivariant holonomic  D-module supported on $\Bun_\calG\setminus\mathring{\Bun}_{\calG}$ is zero (in fact, holonomicity is unnecessary for this statement). Therefore,
$\Aut_\calE$ is isomorphic to the immediate (in fact, clean) extension of $\Aut_\calE|_{\mathring{\Bun}_{\calG}}$. As the Kloosterman D-module constructed in \cite{HNY} is the eigenvalue of $\Aut_\calE$, it coincides with the connection \eqref{Kl}. On the other hand, $\Psi$ is generic if and only if the induced character $\varphi_\chi: U(V)\to\bbC$ takes non-zero value on $g$. Therefore the connection \eqref{Kl} for generic $\Psi$ also coincides with the one constructed in \cite{FG}. We are done.

\begin{rmk}Note that even $\Psi$ is non-generic, $\Aut_\calE$ is still a Hecke eigensheaf, but for a tame local system on $\bbG_m$ (since in this case $\varphi_\chi(g)=0$). However, in this case it is not easy to express $\Aut_\calE$ as an extension of the local system on $\mathring{\Bun}_\calG$ by \emph{sheaf theoretical} operations.
\end{rmk}

\end{document}